\numberwithin{equation}{section}
\theoremstyle{plain}
\newtheorem{theorem}[equation]{Theorem}
\newtheorem{lemma}[equation]{Lemma}
\newtheorem{corollary}[equation]{Corollary}
\newtheorem{proposition}[equation]{Proposition}
\theoremstyle{definition}
\newtheorem{definition}[equation]{Definition}
\newtheorem{example}[equation]{Example}
\newtheorem{algorithm}[equation]{Algorithm}
\theoremstyle{remark}
\newtheorem{remark}[equation]{Remark}
\newcommand{\ZZ}{\mathbb{Z}}
\newcommand{\QQ}{\mathbb{Q}}
\newcommand{\PP}{\mathbb{P}}
\newcommand{\GL}{\mathop{\rm GL}\nolimits}
\newcommand{\PGL}{\mathop{\rm PGL}\nolimits}
\newcommand{\SL}{\mathop{\rm SL}\nolimits}
\newcommand{\Gal}{\mathop{\rm Gal}\nolimits}
\newcommand{\Br}{\mathop{\rm Br}\nolimits}
\newcommand{\p}{\mathfrak{p}}
\newcommand{\OO}{\mathcal{O}}
\newcommand{\Spec}{\mathop{\rm Spec}}
\newcommand{\Aut}{\mathop{\rm Aut}\nolimits}
\newcommand{\Char}{\mathop{\rm char}\nolimits}
\newcommand{\Res}{\mathop{\rm Res}\nolimits}
\newcommand{\sgn}{\mathop{\rm sgn}\nolimits}
\newcommand{\X}{\mathcal X}
\newcommand{\Y}{\mathcal Y}
\newcommand{\D}{\mathcal D}
\newcommand{\Yb}{\overline{Y}}
\newcommand{\Xb}{\overline{X}}
\newcommand{\nr}{{\scriptstyle \rm nr}}
\newcommand{\QQbar}{\overline{\mathbb{Q}}}
\newcommand{\kbar}{\overline{k}}
\newcommand{\Kbar}{\overline{K}}
\newcommand{\Ybar}{\overline{Y}}
\newcommand{\OKSu}{\mathcal{O}_{K,S_K}^*}
\def\phi{\varphi}
\newcommand{\iso}{\stackrel{\sim}{\rightarrow}}
\newcommand{\gen}[1]{\langle #1 \rangle}
\def\ie{i.e.}
\def\eg{e.g.}
\def\loccit{loc.\ cit.}
\renewcommand*\env@matrix[1][*\c@MaxMatrixCols c]{%
  \hskip -\arraycolsep
  \let\@ifnextchar\new@ifnextchar
  \array{#1}}
\begin{document}

\title{Conductor and discriminant of Picard curves}

\begin{abstract}
  We describe normal forms and minimal models of Picard curves, discussing
  various arithmetic aspects of these. We determine all so-called special
  Picard curves over $\QQ$ with good reduction outside $2$ and $3$, and use
  this to determine the smallest possible conductor a special Picard curve may
  have.  We also collect a database of Picard curves over $\QQ$ of small
  conductor.
\end{abstract}

\author[Bouw]{Irene I.\ Bouw}
\address{%
  Irene I.\ Bouw,
  Institut für Reine Mathematik,
  Universität Ulm,
  Helmholtzstrasse 18,
  89081 Ulm,
  Germany
}
\email{irene.bouw@uni-ulm.de}

\author[Koutsianas]{Angelos Koutsianas}
\address{%
Angelos Koutsianas,
Department of Mathematics,
The University of British Columbia,
1984 Mathematics Road,
Vancouver, British Columbia,
V6T 1Z2,
Canada}
\email{akoutsianas@math.ubc.ca}

\author[Sijsling]{Jeroen Sijsling}
\address{%
  Jeroen Sijsling,
  Institut für Reine Mathematik,
  Universität Ulm,
  Helmholtzstrasse 18,
  89081 Ulm,
  Germany
}
\email{jeroen.sijsling@uni-ulm.de}

\author[Wewers]{Stefan Wewers}
\address{%
  Stefan Wewers,
  Institut für Reine Mathematik,
  Universität Ulm,
  Helmholtzstrasse 18,
  89081 Ulm,
  Germany
}
\email{stefan.wewers@uni-ulm.de}

\subjclass[2010]{14H10 (primary); 11G30, 14H25, 14H50 (secondary)}
\keywords{Picard curves, conductor, discriminant, moduli space, reduction}

\maketitle

\section*{Introduction}

Let $K$ be a field whose characteristic does not equal $3$. A \emph{Picard
curve} $Y$ over $K$ is a smooth projective curve of genus $3$ over $K$ whose
base extension to the algebraic closure $\Kbar$ of $K$ admits a Galois morphism
of degree $3$ to $\PP^1$, see Definition \ref{def:Picard}. Over $\Kbar$, the
curve $Y$ can be described by an equation of the form
\begin{equation}\label{eq:intro1}
  Y :\; y^3 = f (x) = x^4 + c_1 x^3 + c_2 x^2 + c_3 x + c_4 \tag{$\ast$}
\end{equation}
with $f$ separable.  A curve $Y$ admitting an equation \eqref{eq:intro1} over
$K$ is called \emph{superelliptic of exponent $3$} over $K$.  Our definition is
slightly more general than the usual one from \cite{Holzapfel} in that we do
not assume that a Picard curve $Y/K$ admits a superelliptic equation of
exponent $3$ over $K$.

Picard curves are interesting because they furnish a family of superelliptic
curves that are nonhyperelliptic in the smallest genus, namely $3$, where such
a family exists. As such, they form a logical starting point for generalizing
notions developed for hyperelliptic and elliptic curves to superelliptic curves
of exponent greater than or equal to $3$. Like elliptic curves, Picard curves
are also smooth plane curves, as one sees by homogenizing the equation
\eqref{eq:intro1}.  Therefore one can use tools from invariant theory as, \eg,
in \cite{LLR} to study Picard curves. By contrast, hyperelliptic curves are
never complete intersections.  In this paper we explore the relations between
the superelliptic and planar vantage points, most importantly to study the
relation between the conductor and the minimal discriminant of a Picard curve
defined over a number field.

\bigskip
We start the paper by studying normal forms for Picard curves. Our approach
generalizes that of \cite[Appendix]{Holzapfel} but also works for residue
characteristic $2$. The first main result (Theorem \ref{thm:shortweq}) states
that every Picard curve over $K$ admits an equation \eqref{eq:intro1} over $K$,
\ie, is superelliptic of exponent $3$ over $K$, with one exception. We show
that the Picard curves that do not admit such an equation form a single
isomorphism class over $\Kbar$, which is characterized by the fact that its
automorphism group is as large as possible. We call these \emph{special Picard
curves}. Theorem \ref{thm:special_sweq} states that special Picard curves are
superelliptic of exponent $4$ over $K$, provided $\Char(K)\neq 2$.

In Section \ref{sec:integred} we recall the definition of the minimal
discriminant $\Delta^{\rm min}(Y)$ of a Picard curve $Y$ defined over a number
field $K$. We show that every nonspecial Picard curve admits a long Weierstrass
equation \eqref{eq:longweq} with minimal discriminant. Away from residue
characteristic $3$ there even exists a short Weierstrass equation
\eqref{eq:minshort} with minimal discriminant (Theorem \ref{thm:minshort}).
The analogous result for special Picard curves is Theorem
\ref{thm:specminshort}.

The primes dividing the minimal discriminant $\Delta^{\rm min}(Y)$ are exactly
the primes of bad reduction of $Y$  (Proposition \ref{prop:goodredternary}). We
obtain an alternative criterion for good reduction using the point of view of
covers of the projective line in Propositions \ref{prop:goodredbinary} and
\ref{prop:special_reduction}.  This criterion is formulated in terms of the
discriminant of the binary form $f$ from \eqref{eq:intro1}.

From Section \ref{sec:special} onward we concentrate on special Picard curves.
Our main result here is Theorem \ref{thm:mincond}, which states that the
smallest possible value for the conductor of a special Picard curve over $\QQ$
is $2^6 3^6$. This value is attained for the standard special Picard curve
$Y_{0}$ defined by \eqref{eq:special}. To prove this, we apply results from
\cite{superell} on the computation of the conductor of a superelliptic curves
via stable reduction. In Section \ref{sec:conductor1} we extend results from
\cite{Picard1} for nonspecial Picard curves to special ones, and prove lower
bounds on the local conductor $f_p$ at a prime of bad reduction. Our results
illustrate how to analyze the effect of twisting on the conductor.

The key ingredient in the proof of Theorem \ref{thm:mincond} is Theorem
\ref{thm:special_2_3}, which classifies all special Picard curves defined over
$\QQ$ with good reduction outside $2$ and $3$: there are precisely 800
different $\QQ$-isomorphism classes. The proof uses methods in Galois
cohomology that generalize beyond this particular case.

We expect $N=2^6 3^6$ to be the smallest conductor for any Picard curve defined
over $\QQ$. Following the strategy for studying this question outlined in
\cite[Section 5]{Picard1}, we have constructed a large database of Picard
curves over $\QQ$ that have good reduction outside two small primes, and more
precisely outside of the pairs $\left\{ 2, 3 \right\}$, $\left\{ 3 , 5
\right\}$, and $\left\{ 3, 7 \right\}$. These curves were obtained by methods
described in the forthcoming work \cite{kmr} (summarized in Appendix
\ref{sec:modified_Smart}) as well as an effective enumeration by Sutherland
\cite{Sutherland_database}. Our database gives equations for these Picard
curves, as well as their invariants, discriminants, and conductors. Its
construction is briefly discussed in the concluding Appendix
\ref{sec:database}.

The database provides evidence for the question, discussed in Section
\ref{sec:upperbound}, whether the conductor of a Picard curve divides its
minimal discriminant, as is the case for curves of genus $1$ and $2$.

\subsection*{Notations and conventions}

In this article, a curve is a separated scheme of dimension $1$ over a field.
Given an affine equation for a curve, we will identify it with the smooth
projective curve with the same function field.

\numberwithin{equation}{section}
\section{Picard curves}\label{sec:curves}

Let $K$ be a field of characteristic different from $3$. We fix an algebraic
closure $\Kbar$ of $K$.

\begin{definition}\label{def:Picard}
  A \emph{Picard curve} $Y$ over $K$ is a curve of genus $3$ over $K$ such that
  the base extension $Y_{\Kbar} = Y \otimes_K \Kbar$ admits a morphism
  $\phi_{\Kbar} : Y_{\Kbar} \to \PP^1_{\Kbar}$ that is a Galois cover of degree
  $3$.
\end{definition}

\begin{remark}
  This definition of a Picard curve is slightly more general than that in
  \cite[Appendix I]{Holzapfel} and \cite{Picard1}, in that we do not require
  the Galois cover $\phi_{\Kbar}$ to be defined over $K$.
\end{remark}

Given a Picard curve $Y$ over $K$, a morphism $\phi_{\Kbar}$ as in Definition
\ref{def:Picard} corresponds to a subgroup $G \subset \Aut_{\Kbar} (Y_{\Kbar})$
of order $3$ such that the quotient $Y_{\Kbar} / G$ has genus $0$.  We call the
group $G$ a \emph{distinguished subgroup of automorphisms} of $Y_{\Kbar}$.

\begin{lemma}\label{lem:shortweqbar}
  Suppose that $K = \Kbar$. Let $Y$ be a Picard curve over $K$, and let $G$ be
  a fixed distinguished subgroup of automorphisms of $Y$. Then there exist
  affine coordinates $x$ and $y$ on $Y$ that furnish an equation
  \begin{equation}\label{eq:shortweq}
    Y :\; y^3 = f (x) = x^4 + c_1 x^3 + c_2 x^2 + c_3 x + c_4
  \end{equation}
  for which $f$ is monic and separable of degree $4$ and for which $G$
  corresponds to the group of automorphisms generated by
  \begin{equation}
    \sigma : \quad y \mapsto \zeta_3 y, \quad x \mapsto x .
  \end{equation}
\end{lemma}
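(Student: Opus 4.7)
My plan is to combine a Riemann--Hurwitz count with Kummer theory for the quotient map $\phi : Y \to X := Y/G$, and then to use the freedom in choosing affine coordinates on $X$ to put the equation into the desired shape.

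First I would determine the branch locus of $\phi$. Since $\Char(K) \neq 3$ the cyclic cover $\phi$ is tame, and the quotient $X$ has genus $0$, so $X \cong \PP^1_K$. Every ramification point has ramification index $3$, so Riemann--Hurwitz gives
\[
2 \cdot 3 - 2 \;=\; 3(-2) + 2 r,
\]
where $r$ is the number of branch points in $X$. Hence $r = 5$.

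Next I would invoke Kummer theory. Fix a generator $\sigma$ of $G$ and a primitive cube root of unity $\zeta_3 \in K$. There exists $y \in K(Y)^\times$, unique up to multiplication by $K(X)^\times$, satisfying $\sigma(y) = \zeta_3 y$, and then $h := y^3 \in K(X)^\times$ is well-defined in $K(X)^\times / (K(X)^\times)^3$. Fixing any affine coordinate $x$ on $X = \PP^1$, I may choose the representative $h$ in the form
\[
h \;=\; c \prod_{i} (x - a_i)^{n_i}, \qquad n_i \in \{1,2\},
\]
with the $a_i \in K$ pairwise distinct. The branch points of $\phi$ in $X$ are exactly the points where the valuation of $h$ is $\not\equiv 0 \pmod 3$, namely the $a_i$ together with $\infty$ if $\sum_i n_i \not\equiv 0 \pmod 3$.

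The key combinatorial step is to match this description with the count $r = 5$. Since the divisor of $h$ has degree zero, the multiplicities $n_P \in \{1,2\}$ at the $r=5$ branch points $P$ must satisfy $\sum_P n_P \equiv 0 \pmod 3$. A direct case check on the number of $P$'s with $n_P = 1$ leaves only two possibilities: either four branch points carry $n_P = 1$ and one carries $n_P = 2$, or the reverse. Replacing $\sigma$ by $\sigma^{-1}$ (equivalently $y$ by $y^2$, and $h$ by $h^2 \bmod \text{cubes}$) interchanges the roles of $1$ and $2$, so I may assume the first case. Now I apply a Möbius change of coordinate on $X \cong \PP^1$ to place the unique branch point of type $n = 2$ at $\infty$. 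The four remaining branch points become finite points $a_1, a_2, a_3, a_4 \in K$, all of type $n = 1$, so
\[
h \;=\; c \prod_{i=1}^{4} (x - a_i)
\]
for some $c \in K^\times$. Finally, since $K = \Kbar$ I rescale $y$ by a cube root of $c$ to make the polynomial monic, obtaining
\[
y^3 \;=\; (x-a_1)(x-a_2)(x-a_3)(x-a_4) \;=\; x^4 + c_1 x^3 + c_2 x^2 + c_3 x + c_4,
\]
with $f$ separable (the $a_i$ are distinct). By construction $\sigma$ fixes $x \in K(X)$ and acts on $y$ by $\zeta_3$, as required.

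The main obstacle is the combinatorial step: ruling out the bad partitions of the five branch-point multiplicities and showing that, after possibly flipping $\sigma$, one lands in the unique case $(1,1,1,1,2)$ that admits the form \eqref{eq:shortweq}. Everything else is standard tame Kummer theory and a coordinate change on $\PP^1$.
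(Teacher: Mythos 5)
Your proposal is correct and follows essentially the same route as the paper: Riemann--Hurwitz to get five branch points, Kummer theory to write $y^3$ as a product of linear factors with exponents in $\{1,2\}$, the mod-$3$ constraint on the exponents to reduce to the partitions $(1,1,1,1,2)$ and $(2,2,2,2,1)$, the replacement $\sigma\mapsto\sigma^{-1}$ (the paper's $y\mapsto y^{-1}$) to merge these, and a M\"obius change of coordinate placing the exponent-$2$ point at $\infty$. The only difference is bookkeeping: the paper fixes $\infty$ as a branch point first and case-checks the four exponent tuples, while you treat all five branch points symmetrically via the degree-zero condition on $\operatorname{div}(h)$; both arguments are the same in substance.
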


\begin{proof}
  By definition, $X = Y/G$ is a curve of genus $0$. Since $K$ is algebraically
  closed, we may identify $X$ with $\PP^1_K$ by choosing a coordinate $x$ on
  $X$.  Moreover, the Riemann--Hurwitz formula implies that the cover $\phi : Y
  \to \PP^1_K$ has exactly $5$ branch points. We choose the coordinate $x$ in
  such a way that $\infty$ is a branch point of $\phi$.

  By Kummer theory, $K(Y) = K (X) [y] / (y^3 - f)$, where $f \in K(X)$ is not a
  third power. Multiplying $f$ by a suitable third power, we may assume that
  $f$ is a monic polynomial of the form
  \begin{equation}
    f = \prod_{i=1}^4 (x - \alpha_i)^{e_i},
  \end{equation}
  with exactly $4$ pairwise distinct roots $\alpha_i$ and $e_i\in\{1,2\}$.
  Moreover, since $\phi$ branches at $\infty$, we have
  \begin{equation}
    e_1 + e_2 + e_3 + e_4 \not\equiv 0 \pmod{3}.
  \end{equation}
  Up to reordering the roots of $f$, there are precisely $4$ cases to consider:
  \begin{equation}
    (e_i)_i = (1, 1, 1, 1), \; (1, 1, 1, 2),
           \; (1, 2, 2, 2), \; (2, 2, 2, 2).
  \end{equation}
  Replacing $y$ by $y^{-1}$, if necessary, we may assume that $(e_i)_i = (1, 1,
  1, 1)$ or $(e_i)_i = (1, 1, 1, 2)$. If we are in the second case, then we can
  apply the change of coordinate $x \mapsto 1 / (x - \alpha_4)$ and obtain
  $(e_i)_i = (1, 1, 1, 1)$. In other words, we may assume that $f$ is a monic
  and separable polynomial of degree $4$. The lemma is now proved.
\end{proof}

\begin{remark}\label{rem:distpoint}
  As the proof of Lemma \ref{lem:shortweqbar} shows, the point at infinity of
  the model \eqref{eq:shortweq} is distinguished from the other branch points
  of $\phi$. Indeed, as a Galois cover with group $\ZZ / 3 \ZZ$, we have
  obtained identical local monodromy generators $1 \in \ZZ / 3 \ZZ$ at the
  finite branch points, so that the point at infinity has local monodromy
  generator $2 \in \ZZ / 3 \ZZ$.
\end{remark}

\begin{definition}
  We call \eqref{eq:shortweq} a \emph{short Weierstrass equation} for $Y$. For
  arithmetic reasons that will be described in Theorem \ref{thm:minshort}, we
  will also use this name for equations of the more general form
  \begin{equation}\label{eq:minshort}
    Y :\; b y^3 = c_0 x^4 + c_1 x^3 + c_2 x^2 + c_3 x + c_4
  \end{equation}
\end{definition}

\begin{corollary}\label{cor:nothyper}
  Let $Y$ be a Picard curve over $K$. Then $Y$ is not hyperelliptic.
\end{corollary}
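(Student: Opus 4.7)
The plan is to reduce to the algebraic closure and then exhibit $Y_{\Kbar}$ as a smooth plane quartic, which is automatically non-hyperelliptic since it realizes its own canonical embedding. Being hyperelliptic is a geometric property (it is equivalent to the existence of a $g^1_2$, whose existence descends along the faithfully flat extension $K \to \Kbar$), so it suffices to show that $Y_{\Kbar}$ is not hyperelliptic. I may therefore assume $K = \Kbar$.

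By Lemma \ref{lem:shortweqbar}, $Y$ admits an affine equation $y^3 = f(x)$ with $f$ monic, separable of degree $4$. Homogenizing, I obtain the projective curve in $\PP^2_K$ cut out by
\begin{equation*}
  F(x,y,z) \;=\; y^3 z - x^4 - c_1 x^3 z - c_2 x^2 z^2 - c_3 x z^3 - c_4 z^4,
\end{equation*}
a ternary quartic. I then check that this plane curve is smooth. In the affine chart $z = 1$, the Jacobian criterion reduces to the separability of $f$: at a point with $y \neq 0$ the $y$-derivative $3y^2$ is nonzero, while at a point with $y = 0$ one has $f(x) = 0$ and hence $f'(x) \neq 0$. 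The points at infinity satisfy $x^4 = 0$, so $[0:1:0]$ is the unique point with $z = 0$; there the $z$-partial equals $y^3 = 1 \neq 0$, so the curve is smooth at this point as well. Hence $Y$ is isomorphic to a smooth plane quartic.

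To conclude, I invoke the classical fact that a smooth plane quartic $C \subset \PP^2$ is canonically embedded. Indeed, its genus is $\binom{3}{2} = 3$ and the hyperplane class coincides with the canonical class by adjunction, so the inclusion $C \hookrightarrow \PP^2$ is the canonical map. Since the canonical map of a hyperelliptic curve factors through the $g^1_2$ and lands in a rational normal curve (a conic for $g = 3$), never embedding the curve as a plane quartic, $Y$ cannot be hyperelliptic. The only real content is the smoothness verification above; the rest is bookkeeping with classical facts on genus-$3$ curves. As an alternative route, one could compute directly that $\omega_1 = dx/y^2$, $\omega_2 = x\,dx/y^2$, $\omega_3 = dx/y$ form a basis of holomorphic differentials and that the associated canonical map $P \mapsto [1 : x(P) : y(P)]$ is exactly the embedding into the plane quartic above, making non-hyperellipticity immediate.
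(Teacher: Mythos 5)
Your proof is correct and follows essentially the same route as the paper: homogenize the short Weierstrass equation from Lemma \ref{lem:shortweqbar} to get a smooth plane quartic, and observe that this is the canonical embedding (the paper lists exactly the differentials $x\,\mathrm{d}x/y^2$, $y\,\mathrm{d}x/y^2$, $\mathrm{d}x/y^2$ you mention as the alternative route), which is impossible for a hyperelliptic curve. You merely make explicit two steps the paper leaves implicit — the reduction to $\Kbar$ and the Jacobian-criterion smoothness check — both of which are carried out correctly.
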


\begin{proof}
  Since the polynomial $f$ in \eqref{eq:shortweq} is separable, the homogenized
  equation
  \begin{equation}\label{eq:shortweqproj}
    Y :\; F (y, x, z)
        = y^3 z - x^4 - c_1 x^3 z - c_2 x^2 z^2 - c_3 x z^3 - c_4 z^4
        = 0
  \end{equation}
  defines a smooth plane quartic model  for $Y$ in $\PP^2$. Now one concludes
  either by using the fact that hyperelliptic curves are not complete
  intersections or by noting that this embedding corresponds to the canonical
  morphism defined by the differentials $x \, \mathrm{d}x / y^2$, $y \,
  \mathrm{d}x / y^2$, $\mathrm{d}x / y^2$ of \eqref{eq:shortweq}.
\end{proof}

\begin{remark}
  We have ordered the variables in \eqref{eq:shortweqproj} as $y, x, z$, since
  this makes a few statements in this paper, particularly Lemma
  \ref{lem:normalize}, slightly more elegant.
\end{remark}

\begin{definition}\label{def:special}
  We call a Picard curve $Y$ over $K$ \emph{special} if its base extension
  $Y_{\Kbar}$ admits more than one distinguished group of automorphisms.
  Moreover, we call the curve
  \begin{equation} \label{eq:special}
    Y_0 :\; y^3 = x^4 - 1
  \end{equation}
  the \emph{standard} special Picard curve over $K$.
\end{definition}

\begin{lemma}\label{lem:special}
  Suppose that the characteristic of $K$ also does not equal $2$. Then
  a Picard curve over $K$ is special if and only if it is isomorphic over
  $\Kbar$ to the standard special Picard curve \eqref{eq:special}.
\end{lemma}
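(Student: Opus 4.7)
The plan is to prove the two directions separately.

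For the reverse direction, I would verify that $Y_0$ is special by exhibiting a second distinguished subgroup on the canonical plane model $F = y^3 z - x^4 + z^4 = 0$. The strategy is to produce a non-diagonal automorphism $\rho$ of $Y_0$ that moves the distinguished hyperflex $P_0 = (0:1:0)$ to another point of $Y_0$; then $\rho \sigma \rho^{-1}$ generates a new distinguished subgroup with a distinct isolated fixed point. A natural candidate is $\rho(x : y : z) = (\mu x : 2z - y : y + z)$ with $\mu^4 = 9$: the polynomial identity $(2z - y)^3(y + z) + (y + z)^4 = 9(y^3 z + z^4)$ combined with $(\mu x)^4 = 9 x^4$ yields $\rho^* F = 9 F$, so $\rho \in \Aut(Y_0)$. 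Since $\rho(P_0) = (0:-1:1) \neq P_0$, the conjugate $\rho \sigma \rho^{-1}$ has a different isolated fixed point and hence generates a second distinguished subgroup.

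For the forward direction, suppose $Y$ is special with distinct distinguished subgroups $G = \langle \sigma \rangle$ and $G' = \langle \sigma' \rangle$. By Lemma \ref{lem:shortweqbar} applied to $G$, I fix coordinates so that $Y : y^3 = f(x)$ with $f$ monic separable of degree $4$ and $\sigma(y) = \zeta_3 y$. On the canonical plane quartic, both $\sigma$ and $\sigma'$ are order-$3$ elements of $\PGL_3$ conjugate to $\mathrm{diag}(1, \zeta_3, 1)$, each with an isolated fixed point (a hyperflex of $Y$) and a fixed line meeting $Y$ in four further fixed points. Since point-stabilizers in $\Aut(Y)$ are cyclic (here $\Char(K) \ne 2, 3$), the fixed-point sets $\mathrm{Fix}(\sigma)$ and $\mathrm{Fix}(\sigma')$ are disjoint, so the isolated fixed point $P'$ of $\sigma'$ is an affine point and does not lie on the line $\{y = 0\}$.

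Using the residual affine freedom $x \mapsto ax + b$, $y \mapsto cy$ (with $c^3 = a^4$) preserving the short Weierstrass form, I would normalize $P' = (0, -1)$; this forces $c_4 = f(0) = -1$. The existence of $\sigma'$ then imposes algebraic conditions: the point $(0, -1)$ must be a hyperflex of the plane quartic $y^3 z = f_4(x, z)$ (equivalently, the quadratic and cubic coefficients in the tangent-line intersection vanish, yielding two polynomial equations in $c_1, c_2, c_3$), and there must exist a type-(II) order-$3$ element of $\PGL_3$ preserving $Y$ with $P'$ as its isolated fixed point. Working these conditions through should force $c_1 = c_2 = c_3 = 0$, so that $f(x) = x^4 - 1$ and $Y \cong Y_0$ over $\Kbar$.

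The main obstacle will be the final step, extracting $c_1 = c_2 = c_3 = 0$ from the combined hyperflex and group-action conditions at $P'$. A natural simplification is to exploit the fact that $\sigma$ permutes the three candidate affine hyperflexes $(0, -\zeta_3^j)$ cyclically, together with the additional rigidity provided by the exponent-$4$ superelliptic equation guaranteed for special Picard curves by Theorem \ref{thm:special_sweq}.
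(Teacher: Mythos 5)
Your reverse direction is complete and correct: the identity $(2z-y)^3(y+z)+(y+z)^4=9(y^3z+z^4)$ does hold, so your $\rho$ is an automorphism of $Y_0$ moving the hyperflex at infinity, and conjugating $\sigma$ by it yields a second distinguished subgroup (this $\rho$ is essentially the automorphism $\rho$ of \eqref{eq:sigma_tau_rho} in the other coordinate system). The forward direction, however, is where the content of the lemma lies, and there your argument remains a sketch with a genuine gap. The decisive computation --- that the hyperflex condition at $P'=(0,-1)$ together with the existence of a type-$(1,1,\zeta_3)$ order-$3$ element fixing $P'$ forces $c_1=c_2=c_3=0$ --- is exactly the step you defer (``working these conditions through should force\dots''). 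The hyperflex condition alone only gives $c_1=c_3^3/27$ and $c_2=-c_3^2/3$, i.e.\ a one-parameter family, so everything hinges on the unexecuted automorphism condition. Worse, the ``natural simplification'' you propose, invoking Theorem \ref{thm:special_sweq}, is circular: the proof of that theorem begins by choosing an isomorphism $Y\otimes_K\Kbar\cong Y_0$, which exists precisely by Lemma \ref{lem:special}.

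A second gap: the claim that $\mathrm{Fix}(\sigma)$ and $\mathrm{Fix}(\sigma')$ are disjoint because point stabilizers are cyclic is justified only when the stabilizer order is prime to $\Char(K)$; excluding $\Char(K)=2,3$ does not guarantee this. If $p=\Char(K)>0$ divides the stabilizer order, the stabilizer is an extension of a cyclic group by a wild $p$-group and can contain several distinct order-$3$ subgroups fixing the same point. This is not a hypothetical worry: it is precisely why the paper's own proof, which simply cites the classification of automorphism groups of smooth plane quartics in \cite[Theorem 3.1]{param}, must separately dispose of the Klein quartic in characteristic $7$ (there by checking relations among Dixmier--Ohno invariants). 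So the two approaches are genuinely different --- yours an explicit coordinate computation, the paper's a one-line appeal to a classification theorem --- but as written yours is not yet a proof: you need to carry out the elimination at $P'$ without appealing to Theorem \ref{thm:special_sweq}, and you need an argument covering wild stabilizers in small positive characteristic.
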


\begin{proof}
  This follows from the classification of the automorphism groups of plane
  curves in \cite[Theorem 3.1]{param}. For the exceptional case in \loccit,
  which is the Klein quartic in characteristic $7$, the statement of the lemma
  can be verified directly, for example by using that the relations satisfied
  by the Dixmier--Ohno invariants \cite{dixmier,elsenhans-invs,giko} of the
  curves \eqref{eq:shortweq} are not satisfied.
\end{proof}

Special Picard curves will be considered from Section \ref{sec:special} on;
until then, we will exclusively consider nonspecial Picard curves.

\begin{theorem}\label{thm:shortweq}
  Let $Y$ be a nonspecial Picard curve over a field $K$ with $\Char (K) \neq
  3$. Then $Y$ admits a short Weierstrass equation \eqref{eq:shortweq} defined
  by a monic polynomial $f$ over $K$.
\end{theorem}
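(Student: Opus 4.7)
The plan is to descend the construction of Lemma \ref{lem:shortweqbar} from $\Kbar$ to $K$. First I would descend the distinguished subgroup: since $Y$ is nonspecial, $G$ is the unique distinguished subgroup of $\Aut_{\Kbar}(Y_{\Kbar})$, so it is Galois-stable and descends to a finite $K$-subgroup scheme of $\Aut_K(Y)$. The quotient $\phi \colon Y \to X := Y/G$ is then defined over $K$, with $X$ of genus $0$. By Remark \ref{rem:distpoint}, the ``distinguished'' branch point of $\phi$ has a local monodromy differing from the other four; this labeling is intrinsic, so the distinguished branch point is Galois-stable and hence $K$-rational. Thus $X \cong \PP^1_K$; choose a coordinate $x$ placing the distinguished point at $\infty$, and let $P_\infty \in Y(K)$ denote the unique point above it.

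Next I would identify $G$ as a $K$-form of $\mu_3$. Because $P_\infty$ is a ramification point of $\phi$, $G$ acts faithfully on the $K$-line $T_{P_\infty}(Y)$, yielding an injection of $K$-group schemes $G \hookrightarrow \GL(T_{P_\infty}(Y)) = \GG_{m,K}$. The image is the unique order-$3$ subgroup of $\GG_{m,K}$, so $G \cong \mu_3$.

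Now I would apply Kummer theory for $\mu_3$-torsors, which works without assuming $\zeta_3 \in K$: since $\mu_3$ is diagonalizable, its action on the $K(x)$-algebra $K(Y)$ decomposes as a $\ZZ/3\ZZ$-graded module $K(Y) = K(x) \oplus V_1 \oplus V_2$, with $V_1$ and $V_2$ each $1$-dimensional over $K(x)$. The common inertia character at the finite branch points is an intrinsic element of the character group of $\mu_3$, so by possibly swapping $V_1$ and $V_2$ I may choose $y \in V_1 \setminus \{0\}$ realizing it; then $y^3 = h \in K(x)^{*}$, and the exponent of $h$ is $\equiv 1 \pmod 3$ at each finite branch point and $\equiv 2 \pmod 3$ at $\infty$.

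Finally I would normalize. The finite branch points form $\Gal(\Kbar/K)$-orbits corresponding to monic irreducible factors $p_j \in K[x]$ with $\sum_j \deg p_j = 4$; multiplying $y$ by a suitable $\prod_j p_j^{m_j} \in K(x)^{*}$ reduces each exponent of $h$ to exactly $1$, yielding $y^3 = c \cdot f(x)$ with $f := \prod_j p_j$ monic of degree $4$ and $c \in K^{*}$. Absorbing $c$ uses $\gcd(3,4) = 1$: pick integers $a, b$ with $3b - 4a = 1$ (e.g.\ $(a,b) = (2,3)$); the substitution $x \leftarrow c^{a} x$, $y \leftarrow c^{b} y$ rescales the leading coefficient to $c^{4a + 1 - 3b} = 1$, producing \eqref{eq:shortweq} over $K$. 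The conceptual heart is the identification $G \cong \mu_3$, which is what makes Kummer descent give a cube-root equation over $K$ even when $\zeta_3 \notin K$; everything else is careful $K$-rational bookkeeping following the proof of Lemma \ref{lem:shortweqbar}.
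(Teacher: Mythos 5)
Your argument is correct and follows the same route as the paper's proof: uniqueness of the distinguished subgroup makes $G$ and the distinguished branch point Galois-stable, hence $K$-rational, so that $X \cong \PP^1_K$ and the cover is cut out by a separable quartic over $K$, after which a rescaling exploiting $\gcd(3,4)=1$ makes $f$ monic. The only difference is that you carefully justify the step the paper merely asserts --- that $Y$ is given by $y^3 = f(x)$ over $K$ even when $\zeta_3 \notin K$ --- via the identification $G \cong \mu_3$ through its faithful action on $T_{P_\infty}(Y)$ and the character-graded Kummer decomposition of $K(Y)$; this is a welcome addition, in the spirit of the paper's remark that the theorem can alternatively be proved by the descent methods of Theorem \ref{thm:special_sweq}.
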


\begin{proof}
  Since the distinguished group of automorphisms is defined over $K$, the
  ramification divisor of the morphism $\phi$ in the proof of Lemma
  \ref{lem:shortweqbar} is $K$-rational. By Remark \ref{rem:distpoint}, this
  divisor contains a distinguished point $P$.  Together, these statements imply
  that $X = Y / G$ is $K$-isomorphic to $\PP^1$. If we choose a coordinate $x$
  on $X$ for which $x (P) = \infty$, then $\phi$ ramifies over the divisor of a
  polynomial $f$ in $x$ with $K$-rational coefficients. This implies that $Y$
  is given by an equation
  \begin{equation}
    Y :\; y^3 = f(x) = c_0 x^4 + c_1 x^3 + c_2 x^2 + c_3 x + c_4 .
  \end{equation}
  where $f \in K[x]$ is separable of degree $4$. A change of coordinates $(x,
  y) \mapsto (c_0^{-1} x, c_0^{-1} y)$ ensures that $f$ is monic.
\end{proof}

\begin{remark}
  Theorem \ref{thm:shortweq} can also be proved by the same methods as Theorem
  \ref{thm:special_sweq}.
\end{remark}

The isomorphisms between nonspecial Picard curves admit a simple description,
as in the case of hyperelliptic curves.

\begin{lemma}\label{lem:isos}
  Let $Y_i :\; y^3 = f_i (x)$ be two nonspecial Picard curves over $K$.
  \begin{enumerate}[(a)]
    \item Let $\phi : Y_1 \to Y_2$ be an isomorphism. Then there exist $\alpha,
      \beta, \gamma \in K$, with $\alpha, \gamma \neq 0$, such that
      \begin{equation}\label{eq:isos1}
        \phi^* (x) = \alpha x + \beta, \quad \phi^* (y) = \gamma y .
      \end{equation}
    \item There exists an isomorphism $\phi : Y_1 \to Y_2$ if and
      only if there exist $\alpha, \beta, \gamma \in K$ with $\alpha, \gamma
      \neq 0$ such that
      \begin{equation}\label{eq:isos2}
        f_2 (\alpha x + \beta) = \gamma^3 f_1 (x),
      \end{equation}
      in which case we can take $\phi (x, y) = (\alpha x + \beta,
      \gamma y)$.
  \end{enumerate}
\end{lemma}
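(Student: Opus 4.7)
The plan is to use the nonspecial hypothesis to reduce the isomorphism $\phi : Y_1 \to Y_2$ to the level of the quotients by distinguished subgroups. Since each $Y_i$ is nonspecial, its distinguished subgroup $G_i \subset \Aut_{\Kbar}(Y_{i,\Kbar})$ of order $3$ is unique (Definition \ref{def:special}). Hence $\phi G_1 \phi^{-1} = G_2$, and $\phi$ descends to an isomorphism $\bar\phi : X_1 \to X_2$ of the quotients $X_i = Y_i / G_i$. Each $X_i$ is identified with $\PP^1_K$ via the coordinate $x$ from the short Weierstrass equation, and under this identification the distinguished branch point of $\phi_i$ sits at $\infty$ (Remark \ref{rem:distpoint}).

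The next step is to show $\bar\phi(\infty) = \infty$, and this is the step I expect to be the main obstacle. The distinguished point is characterized intrinsically among the five branch points as the unique one whose local monodromy generator lies in the ``minority'' of the two nontrivial conjugacy classes in $\ZZ / 3 \ZZ$ (four branch points belong to one class, one to the other). Since $\bar\phi$ is induced by the isomorphism $\phi$ of the Galois covers, it must respect the local monodromy labeling up to the swap automorphism $\sigma \leftrightarrow \sigma^{-1}$ of $G$. In either case, $\infty$ must be sent to $\infty$: otherwise the four finite branch points of $X_1$ would collapse onto the single distinguished point of $X_2$, contradicting the injectivity of $\bar\phi$. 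Thus $\bar\phi$ is an affine map $x \mapsto \alpha x + \beta$ with $\alpha \in K^*$ and $\beta \in K$.

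Then I would determine $\phi^* y$ algebraically. After choosing generators $\sigma_i$ of $G_i$ compatibly so that $\phi \circ \sigma_1 = \sigma_2 \circ \phi$, the function $\phi^* y$ becomes a $\zeta_3$-eigenvector of $\sigma_1$ on $K(Y_1)$, and this eigenspace is $y \cdot K(x)$. Writing $\phi^* y = g(x) y$ with $g \in K(x)$ and cubing gives $g(x)^3 f_1(x) = \phi^*(f_2(x)) = f_2(\alpha x + \beta)$; since $f_1$ and $f_2$ are separable of degree $4$, matching orders of vanishing on both sides forces $g$ to be a constant $\gamma \in K^*$. This simultaneously establishes (a) and produces the identity $\gamma^3 f_1(x) = f_2(\alpha x + \beta)$ needed for (b). The remaining ``if'' direction of (b) is a direct verification: given such $\alpha, \beta, \gamma$, the map $(x, y) \mapsto (\alpha x + \beta, \gamma y)$ sends $Y_1$ to $Y_2$ because $(\gamma y)^3 = \gamma^3 f_1(x) = f_2(\alpha x + \beta)$, and its inverse is given by $\alpha^{-1}$ and $\gamma^{-1}$.
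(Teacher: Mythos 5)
Your proposal is correct and follows essentially the same route as the paper: use nonspecialness to get $\phi G_1 \phi^{-1} = G_2$, descend to the quotients, observe that the distinguished branch point at $\infty$ (Remark \ref{rem:distpoint}) is preserved so that $\phi^*x = \alpha x + \beta$, and then pin down $\phi^*y$ as a scalar multiple of $y$. Your justification of the last step via the $\zeta_3$-eigenspace and order-of-vanishing matching is a slightly more explicit version of the paper's divisor argument, but it is the same idea.
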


\begin{proof}
  Let $G_i$ denote the distinguished automorphism group of the curve
  $Y_{i,\bar{K}}$. Since $Y_1, Y_2$ are assumed to be nonspecial, we have $G_2
  = \phi G_1 \phi^{-1}$, implying that $\phi$ induces an isomorphism $\psi :
  X_1 \to X_2$ between the quotients $X_i = Y_i / G_i$.

  The morphism $\phi$ maps the ramification (resp.\ branch) divisor of $Y_1 \to
  X_1$ onto the ramification (resp.\ branch) divisor of $Y_2 \to X_2$. We have
  identified the curves $X_i$ with the projective line $\PP^1_K$ as in the
  proof of Theorem \ref{thm:shortweq}. Therefore $\psi$ fixes the distinguished
  point $\infty$ in the proof of Theorem \ref{thm:shortweq}.  We get
  \begin{equation}
    \phi^* (x) = \psi^* (x) = \alpha x + \beta,
  \end{equation}
  with $\alpha, \beta \in K$ and $\alpha \neq 0$.

  Moreover, $\phi$ maps the divisors of zeros and poles of $y$ to one another.
  We conclude that $\phi^* (y) = \gamma y$ for a constant $\gamma\in K^\times$.
  Hence
  \begin{equation}
    \phi^* (f_2) = \phi^* (y^3) = \phi^* (y)^3 = \gamma^3 y^3 = \gamma^3 f_1 .
  \end{equation}
  We have shown (a) and the forward direction of (b). The reverse direction of
  (b) can be verified by a calculation.
\end{proof}

\begin{remark}
  Alternatively, one notes that any isomorphism between the curves $Y_1$ and
  $Y_2$ is induced by an element of the normalizer of the common automorphism
  group of \eqref{eq:shortweqproj}, after which one applies \cite[Theorem
  A.1.(iv)]{param}.
\end{remark}

\numberwithin{equation}{subsection}
\section{Nonspecial Picard curves as plane curves}\label{sec:integred}

In this section we consider integral equations and reduction properties of a
nonspecial Picard curve $Y$. To this end, we fix a discrete valuation $v$ on
our field $K$, with uniformizer $\pi$, valuation ring $\OO_K$ and residue field
$k$. In Section \ref{sec:disc}, we review the \emph{discriminant} of ternary
quartic forms, which gives a way to quantify bad reduction behavior of an
integral quartic model of $Y$. In Section \ref{sec:mindisc}, we study the
integral models of $Y$ that give rise to the smallest possible discriminant. We
show that if $\Char (k) \neq 3$, we can find such a minimal model that is
defined by a short Weierstrass equation. When $\Char (k) = 3$, we typically
need a more general integral equation called the \emph{long Weierstrass
equation}, as defined in \eqref{eq:longweq}, to attain a model of minimal
discriminant valuation.

While we limit ourselves to local considerations, our results apply more
globally over number rings with trivial class group, as we will occasionally
clarify in a remark.

\subsection{The discriminant}\label{sec:disc}

We refer to \cite{Demazure} and \cite[Chapter 13]{GKZ}, for a more complete
exposition of what follows.

Let $F \in R [y, x, z]$ be a ternary quartic form over a domain $R$ whose
fraction field does not have characteristic $2$. The \emph{discriminant}
$\Delta (F) = \Delta_{3,4} (F)$ of $F$ is the element of $R$ defined as
\begin{equation}\label{eq:disc_plane}
  \Delta (F) = \Res(\mathrm{D}_y F, \mathrm{D}_x F, \mathrm{D}_z F) / 2^{14} .
\end{equation}
Here $\mathrm{D}_T F$ denotes the partial derivative of $F$ with respect to the
variable $T$, and $\Res$ denotes the resultant of $3$ ternary quartics.

\begin{remark}
  In \cite{LLR}, it is shown that the discriminant $\Delta (F)$ is related to
  the Dixmier--Ohno invariant $I_{27} (F)$ of $F$ via $\Delta (F) = 2^{40}
  I_{27}(F)$.
\end{remark}

\begin{proposition}\label{prop:discprop}
  Let $F \in K [y, x, z]$. The discriminant satisfies the following properties.
  \begin{enumerate}[(a)]
    \item $F$ defines a nonsingular curve over $K$ if and only if $\Delta (F)
      \in K^{\times}$.
    \item If $F \in \OO_K [y, x, z]$ is integral, then the reduction of $F$
      modulo $v$ defines a nonsingular curve over $k$ if and only if $v (\Delta
      (F)) = 0$.
    \item $\Delta$ is a homogeneous form of degree $27$ in the coefficients of
      $F$, so that $v (\Delta (\lambda F)) = v (\Delta (F)) + 27 \lambda$.
    \item $\Delta$ has weight $36$, \ie, $v (\Delta (F \circ T)) = v
      (\Delta (F)) + 36 v (\det (T))$ for $T \in \GL_3 (K)$.
  \end{enumerate}
\end{proposition}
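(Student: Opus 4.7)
The plan is to dispatch (a), (b), and (c) quickly from the definition of $\Delta$ as a resultant, and to concentrate effort on the transformation law (d).

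Parts (a), (b), (c) are essentially bookkeeping. For (c), the Macaulay resultant $\Res_{3,3,3}$ of three ternary cubics is homogeneous of degree $3 \cdot 3 = 9$ in the coefficients of each factor; since each $\mathrm{D}_T F$ is linear in the coefficients of $F$, the composite $\Delta(F)$ is homogeneous of degree $3 \cdot 9 = 27$ in the coefficients of $F$, so $v(\Delta(\lambda F)) = v(\Delta(F)) + 27\, v(\lambda)$. For (a), the smoothness criterion for the plane quartic $F = 0$ is that $F$ and its three partials have no common zero in $\PP^2_{\Kbar}$; Euler's identity $4 F = y\, \mathrm{D}_y F + x\, \mathrm{D}_x F + z\, \mathrm{D}_z F$, valid since $\Char(K) \neq 2$, shows that any common zero of the partials automatically lies on $F = 0$, so nonsingularity of $F = 0$ is equivalent to the three partials having no common zero, i.e.\ to $\Delta(F) \in K^{\times}$. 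Part (b) then follows from (a) applied over both $K$ and $k$, using that $\Delta$ is a universal polynomial with integer coefficients in the coefficients of $F$, so that reduction modulo $\pi$ commutes with forming $\Delta$.

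The substantive step is (d), which I would obtain by combining two classical transformation properties of the Macaulay resultant. First, substituting $X \mapsto TX$ in the arguments multiplies the resultant of three ternary cubics by $\det(T)^{d_1 d_2 d_3} = \det(T)^{27}$. Second, replacing three forms of common degree $d = 3$ by the entries of $A \cdot (h_1, h_2, h_3)^t$, for $A \in \GL_3(K)$, multiplies the resultant by $\det(A)^{d^{n-1}} = \det(A)^{9}$. The chain rule gives
\begin{equation*}
  \mathrm{D}_i (F \circ T)(X) = \sum_j T_{ji}\, (\mathrm{D}_j F)(TX),
\end{equation*}
so the partials of $F \circ T$ arise from those of $F$ by first composing with $T$ and then applying the linear combinations encoded by $A = T^t$. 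Combining the two formulas yields $\Delta(F \circ T) = \det(T)^{9 + 27}\, \Delta(F) = \det(T)^{36}\, \Delta(F)$, and taking valuations gives (d). As a sanity check, specializing to $T = \lambda I_3$ gives $F \circ T = \lambda^4 F$ and hence $\Delta(F \circ T) = \lambda^{108}\, \Delta(F) = \det(T)^{36}\, \Delta(F)$, consistent with (c).

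The main obstacle is verifying the $\det(A)^{d^{n-1}}$ scaling rule for the resultant under linear combinations of same-degree forms: it is the only input not immediate from universal degree counts, though it is a standard fact of elimination theory (\eg\ \cite[Chapter 13]{GKZ}).
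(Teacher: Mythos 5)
Your parts (a), (c), and (d) are correct, and in (d) you give a genuinely more self-contained argument than the paper: the paper simply cites the general fact that an invariant of ternary quartics of degree $3k$ has weight $4k$ (here $k=9$), whereas you derive the weight $36 = 9 + 27$ directly from the two transformation laws of the Macaulay resultant via the chain rule $\mathrm{D}_i(F\circ T) = \sum_j T_{ji}\,(\mathrm{D}_jF)\circ T$; your sanity check with $T = \lambda I_3$ confirms the bookkeeping, and the degree count in (c) matches the paper's bare assertion of homogeneity of degree $27$ (you even correct the statement's typo by writing $27\,v(\lambda)$ rather than $27\lambda$). The Euler-identity argument for (a) is also fine under the standing hypothesis $\Char(K) \neq 2$, where the paper instead just points to \cite[Section 4]{Demazure}.

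The genuine gap is in (b), at residue characteristic $2$. You deduce (b) by applying (a) over the residue field $k$, but your proof of (a) rests on Euler's identity $4F = y\,\mathrm{D}_yF + x\,\mathrm{D}_xF + z\,\mathrm{D}_zF$ together with the formula $\Delta(F) = \Res(\mathrm{D}_yF,\mathrm{D}_xF,\mathrm{D}_zF)/2^{14}$, and both break down when $4 = 0$: one cannot divide by $2^{14}$ in $k$, Euler's identity degenerates to $0 = \sum_i x_i\,\mathrm{D}_iF$, and individual partials can vanish identically (e.g.\ $\mathrm{D}_x(x^4) = 0$ in characteristic $2$). The hypotheses only forbid $\Char(K) = 2$; the residue field is allowed to have characteristic $2$, and this is exactly the case the paper needs (good reduction at $p=2$ in Theorem \ref{thm:special_2_3}, minimality at $2$ in Example \ref{exa:Deltamin2}). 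What is required, and what the paper obtains by citing \cite[Section 4]{Demazure}, is that the \emph{normalized} discriminant is a universal polynomial with integer coefficients whose reduction modulo any prime, including $2$, still cuts out precisely the singular forms. That integrality-plus-smoothness statement is the real content of Demazure's normalization by $2^{14} = 4^7$; it does not follow from your characteristic-zero argument combined with the (correct) observation that reduction commutes with evaluating a universal integer polynomial.
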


\begin{proof}
  The first two statements follow from the properties of $\Delta$ described in
  \cite[Section 4]{Demazure}, whereas the third results from $\Delta$ being
  homogeneous of degree $27$ in the coefficients of $F$.  The final statement
  follows from the fact that an invariant of ternary quartics of degree $3 k$
  has weight $4 k$, as recalled in for example
  \cite[(1.11)]{LRS-Reconstruction}.
\end{proof}

The following statement follows from the properties of the resultant in
\cite[Section 4]{Demazure}. It will be used in many constructions and examples
to follow, like Remark \ref{rem:red_min}.

\begin{lemma}\label{lem:disc_nonspecial}
  Let $f = c_0 x^4 + c_1 x^3 + c_2 x^2 + c_3 x + c_4$ be a separable polynomial
  over $K$ with discriminant $\Delta (f)$, and let $F (y, x, z)$ be a
  corresponding short Weierstrass equation \eqref{eq:minshort}. Then
  \begin{equation}
    \Delta(F) =  -3^9 b^{12} c_0^3 \Delta(f)^2 .
  \end{equation}
\end{lemma}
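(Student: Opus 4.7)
The plan is to establish the claimed formula as a polynomial identity in $\ZZ[b, c_0, c_1, c_2, c_3, c_4]$, by first using the homogeneity properties of the discriminant to reduce to the normalized case $b = c_0 = 1$, and then computing the resulting resultant directly.

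For the reduction, I would apply property (d) of Proposition \ref{prop:discprop} to two diagonal transformations: $T_y = \mathrm{diag}(b^{-1/3}, 1, 1)$ normalizes the coefficient of $y^3 z$ to $1$, and $T_x = \mathrm{diag}(1, c_0^{-1/4}, 1)$ normalizes the coefficient of $x^4$ to $1$. One may work formally over a fraction field where these roots exist, since the identity to establish is polynomial in the coefficients and hence may be checked at the generic point. Property (d) extracts a factor $\det(T_y)^{-36} = b^{12}$ from $T_y$ and a factor $\det(T_x)^{-36} = c_0^{9}$ from $T_x$. Combined with the classical scaling relation $\Delta(f(\lambda x)) = \lambda^{n(n-1)} \Delta(f)$ applied at $n = 4$ and $\lambda = c_0^{-1/4}$, which gives $\Delta(f_{\mathrm{new}})^2 = c_0^{-6} \Delta(f)^2$, the net contribution from $T_x$ to the right-hand side is $c_0^{9-6} = c_0^3$.

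It thus suffices to verify the reduced identity $\Delta(F_0) = -3^9 \Delta(f_0)^2$ for the normalized form $F_0 = y^3 z - f_{0,h}(x, z)$ with $f_0$ monic of degree $4$. The partial derivatives are
\[
  \partial_y F_0 = 3 y^2 z, \qquad \partial_x F_0 = -\partial_x f_{0,h}, \qquad \partial_z F_0 = y^3 - \partial_z f_{0,h}.
\]
Because the Macaulay resultant of three ternary cubics is homogeneous of degree $9$ in the coefficients of each argument, pulling out $3$ from $\partial_y F_0$ and $-1$ from $\partial_x F_0$ contributes the factor $3^9 \cdot (-1)^9 = -3^9$. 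The residual resultant $\Res(y^2 z, \partial_x f_{0,h}, y^3 - \partial_z f_{0,h})$ is then evaluated by decomposing $\{y^2 z = 0\}$ into the double line $\{y = 0\}$ and the line $\{z = 0\}$: the latter contributes trivially in the normalized setting, while the contribution from $\{y=0\}$ reduces, via Euler's identity $x \partial_x f_{0,h} + z \partial_z f_{0,h} = 4 f_{0,h}$, to the discriminant of the binary quartic $f_{0,h}$, with the multiplicity two of the factor $y$ accounting for the square $\Delta(f_0)^2$.

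The main obstacle is to pin down the overall sign and the powers of $2$ precisely, in view of the factor $2^{14}$ in \eqref{eq:disc_plane} and the various Sylvester and Macaulay resultant conventions. The cleanest way to settle this bookkeeping is to specialize to a concrete example such as $f_0 = x^4 - 1$ (where $\Delta(f_0) = -2^8$), or equivalently to verify the full polynomial identity by a short computer algebra calculation, which immediately fixes all remaining signs and powers of $2$.
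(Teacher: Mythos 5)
Your proposal is correct and follows essentially the route the paper itself takes: the paper gives no written argument beyond the sentence that the lemma ``follows from the properties of the resultant in [Demazure, Section 4],'' and the properties you invoke (covariance under $\GL_3$, multidegree $(9,9,9)$ of the resultant of three ternary cubics, multiplicativity in each slot, and the relation between $\Res(\partial_x f_h,\partial_z f_h)$ and $\disc(f)$) are exactly those, so your write-up is a worked-out version of the citation rather than a different proof. One small inaccuracy: the factor $\Res(z,\partial_x f_{0,h},\,y^3-\partial_z f_{0,h})$ coming from the line $z=0$ is not trivial but equals $\pm 4^3=\pm 2^6$ (since the restrictions are $4x^3$ and $y^3-c_1x^3$); this is harmless because, as you say, all signs and powers of $2$ are fixed at the end by a single specialization such as $f_0=x^4-1$, where indeed $\Delta(f_0)=-2^8$ and the two sides agree, consistent with Example \ref{exa:Deltamin2}.
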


\begin{definition}\label{def:mindisc}
  Let $Y$ be a Picard curve over a discretely valued field $(K, v)$. The
  \emph{minimal discriminant exponent} of $Y$ at $v$ is the integer
  \begin{equation}
    e_v (Y) = \min_F \left\{ v (\Delta (F)) \right\} \in \ZZ_{\ge 0} .
  \end{equation}
  Here $F$ runs through the quartic forms $F \in \OO_K [x,y,z]$ that furnish a
  model of $Y$. The \emph{minimal discriminant} of $Y$ at $v$ is the ideal
  \begin{equation}
    (\Delta^{\min} (Y)) = (\pi^{e_v (Y)}) .
  \end{equation}

  For a Picard curve $Y$ over a number field $K$, we define the \emph{minimal
  discriminant} of $Y$ to be the integral ideal
  \begin{equation}\label{eq:mindisc}
    \Delta^{\min} (Y) = \prod_{\p} \p^{e_{v_{\p}} (Y)} .
  \end{equation}
  Here $\p$ runs through the primes of $K$ and  $v_{\p}$ is the valuation
  corresponding to $\p$.
\end{definition}

\begin{remark}
  In what follows, we will identify the minimal discriminant of a Picard curve
  over $\QQ$ with the positive generator of the integral ideal
  \eqref{eq:mindisc}.
\end{remark}

\begin{remark}
 The minimal discriminant was also discussed in \cite[Section
   4.2]{Picard1}. We take this opportunity to correct a mistake.
In \cite{Picard1}, above Lemma 4.3,  it is claimed that one can find an
  equation $y^3 = f(x)$ for any Picard curve such that $v_p (\Delta (f)) < 36$.
  As Elisa Lorenzo García pointed out to us, this is not the case. A
  counterexample is the family of curves
  \begin{equation}
    y^3 = f_n(x) = x^4 + x^2 + p^n\in \QQ_p[x],
  \end{equation}
  whose discriminant valuation at $p$ gets arbitrarily large as $n$ goes to
  $\infty$.  However, the discriminant of the
  Weierstrass equation, and hence of $f$, is minimal for all $n$.
%
\end{remark}

\subsection{Discriminant minimization}\label{sec:mindisc}

In this section, we  derive a standard model for Picard curves over the
discretely valued field $K$. To this end, we first prove a lemma on smooth
plane curves.

\begin{lemma}\label{lem:normalize}
  Let $X = V (F) \subset \PP^2_K$ be a smooth plane curve over $K$ defined by a
  ternary quartic form $F \in \OO_K [x, y, z]$. Let $P \in X (K)$ be a rational
  point on $X$.
  \begin{enumerate}[(a)]
    \item There exists a matrix $T_1 \in \GL_3 (\OO_K)$ that maps $P$ to $(1 : 0
      : 0)$.
    \item Suppose that $P = (1 : 0 : 0)$. Then there exists a matrix
      $T_2 \in \GL_3 (\OO_K)$ that fixes $P$ and that maps the tangent line $L$
       of $X$ in $P$ to $z = 0$.
    \item Let $X_1$ and $X_2$ be two plane curves with the common rational
      point $P_1 = P_2 = (1 : 0 : 0)$ and the common tangent line $z = 0$ at
      these points. Then any isomorphism between $X_1$ and $X_2$ sending $P_1$
      to $P_2$ is induced by an upper triangular matrix $T$ in $\GL_3 (K)$.
  \end{enumerate}
\end{lemma}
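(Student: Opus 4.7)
The plan is to treat (a) and (b) by explicit linear algebra over the DVR $\OO_K$, and (c) by combining canonicity of the plane-quartic embedding with the stabilizer conditions imposed by the point $P$ and its tangent line $L$.

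For (a), I would first normalize any projective representative of $P$ to a primitive vector $(y_0, x_0, z_0) \in \OO_K^3$ (one coordinate a unit) by scaling by an appropriate power of the uniformizer $\pi$. Since $\OO_K$ is a principal ideal domain, every unimodular vector extends to an $\OO_K$-basis of $\OO_K^3$; taking $(y_0, x_0, z_0)^T$ as the first column and adjoining a completion to a basis produces a matrix $T_1^{-1} \in \GL_3(\OO_K)$ sending $(1:0:0)$ to $P$, whose inverse is the desired $T_1$.

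For (b), compute the projective tangent line at $P = (1:0:0)$: it is cut out by $y\,\partial_y F(1,0,0) + x\,\partial_x F(1,0,0) + z\,\partial_z F(1,0,0) = 0$, and Euler's identity together with $F(P) = 0$ forces the $y$-coefficient to vanish, leaving $L = \{\beta x + \gamma z = 0\}$ with $\beta, \gamma \in \OO_K$. After dividing by $\pi^{\min(v(\beta), v(\gamma))}$ one may assume $(\beta, \gamma)$ is primitive; smoothness of $X_K$ at $P$ guarantees that the pair is nonzero, and primitivity then ensures one entry is a unit. Extending the row $(\beta, \gamma)$ to a unimodular $2 \times 2$ matrix over $\OO_K$ and inserting it as the lower-right block of a $3 \times 3$ matrix with a $1$ in the top-left entry yields a $T_2 \in \GL_3(\OO_K)$ that fixes $(1:0:0)$ and sends $L$ to $\{z = 0\}$. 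The subtlety demanding care here is the rescaling step: one must distinguish the tangent as a projective hyperplane (defined only up to scalar) from the specific integral equation read off from the partial derivatives.

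For (c), the essential input is that a smooth plane quartic is embedded by its canonical linear system, so any abstract isomorphism $\phi : X_1 \to X_2$ is induced (uniquely up to scalar) by some $T \in \GL_3(K)$ — and this invocation of canonicity is the deeper content that makes (c) go through at all. Because projectivities preserve intersection multiplicities, $T$ must send the tangent line at $P_1$ to the tangent line at $P_2$, so the common tangent $\{z = 0\}$ at $P_1$ is mapped to the common tangent $\{z = 0\}$ at $P_2$. Combined with $T(P_1) = (1:0:0)$, these conditions force the first column of $T$ to be of the shape $(\lambda, 0, 0)^T$ and force $T \cdot (y, x, 0)^T$ to have vanishing third coordinate for every $(y, x)$, i.e.\ $T_{3,1} = T_{3,2} = 0$; hence $T$ is upper triangular.
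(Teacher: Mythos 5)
Your proof is correct and follows essentially the same route as the paper's: completing a primitive coordinate vector to an $\OO_K$-basis for (a), completing the primitive integral pair defining the tangent line to a unimodular $2\times 2$ block for (b), and reading off the vanishing matrix entries from the stabilizer conditions on $(1:0:0)$ and $\{z=0\}$ for (c). The only differences are that you spell out a couple of points the paper leaves implicit (that the tangent line at $(1:0:0)$ has no $y$-term, and that linearity of the isomorphism in (c) comes from canonicity of the plane quartic embedding), which is fine.
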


\begin{proof}
  (a) It suffices to construct a matrix $U_1 \in \GL_3 (\OO_K)$ sending $(1 : 0 :
  0)$ to $P$ instead. Suppose, without loss of generality, that the first
  coordinate of $P$ has trivial valuation. Then we can take $U_1$ to be the
  matrix whose first column is given by the coordinates of $P$ and whose other
  columns are the standard basis vectors.

  (b) The tangent line $L$ is of the form $\gamma x + \delta z = 0$. We may
  choose $\gamma$ and $\delta$ to be integral and coprime. Choosing $\alpha$
  and $\beta$ integral such that $\alpha \gamma + \beta \delta = 1$, we can
  take $T_2$ to be the inverse of the matrix corresponding to the
  transformation $y \mapsto y$, $x \to \alpha x + \beta z$, $z \mapsto \gamma x
  + \delta z$.

  (c) Any isomorphism between smooth plane curves is induced by an invertible
  matrix $T = (T_{i,j}) \in \GL_3 (K)$. Since $T$ has to fix $(1 : 0 : 0)$, we
  have $T_{2,1} = T_{3,1} = 0$. And since $T$ fixes the common tangent line $z
  = 0$, we need a corresponding condition on its transpose, yielding $T_{3,1} =
  T_{3,2} = 0$.
\end{proof}

\begin{remark}\label{rem:global}
  The same considerations apply globally over a number field with trivial class
  group. For example, the proof of part (a) of the proposition in this more
  global case uses the fact that given a PID $R$, an inclusion $0 \to R \to
  R^3$ with torsion-free quotient $Q$ always admits a splitting $Q \to R^3$.
  Because of this, we can always augment a coprime coordinate vector for $P$ to
  an invertible matrix over $R$, after which the same argument can be run.
\end{remark}

\begin{lemma}\label{lem:minlong}
  Let $Y$ be a nonspecial Picard curve over $K$, and let $F \in K [y, x, z]$
  be a ternary form defining a curve isomorphic to $Y$. Then after an integral
  transformation in $\GL_3 (\OO_K)$, the ternary quartic $F$ yields an equation for
  $Y$ of the form
  \begin{equation}\label{eq:longweq}
    Y :\; (a_0 y^3 + a_1 (x, z) y^2 + a_2 (x, z) y) z = a_4 (x, z) ,
  \end{equation}
  where $a_i$ is a homogeneous form of degree $i$ in $x$ and $z$, and where
  \begin{equation}\label{eq:longweq_rel}
    a_1^2 = 3 a_0 a_2 .
  \end{equation}
  Conversely, any equation \eqref{eq:longweq} satisfying \eqref{eq:longweq_rel}
  defines an equation \eqref{eq:shortweqproj} of a Picard curve via the
  substitution $y \mapsto y - (a_1 (x, z)/ (3 a_0))$.
\end{lemma}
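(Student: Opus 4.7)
The plan is to use the distinguished $K$-rational hyperflex on $V(F)$ to pin down a normal form via Lemma~\ref{lem:normalize}, and then to extract the relation $a_1^2 = 3 a_0 a_2$ by comparing the resulting form with the short Weierstrass model through the rigidity statement of Lemma~\ref{lem:normalize}(c).

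First I would produce the distinguished point. By Theorem~\ref{thm:shortweq}, $Y$ admits a short Weierstrass model $F_0 = y^3 z - \tilde f(x, z)$ over $K$; the point $(1:0:0)$ of $V(F_0)$ is a $K$-rational hyperflex because $F_0(y, x, 0) = -x^4$. Transferring via the given $K$-isomorphism $V(F_0) \iso V(F)$ yields a $K$-rational hyperflex $P$ on $V(F)$. Applying Lemma~\ref{lem:normalize}(a) and then (b) furnishes an integral $T \in \GL_3(\OO_K)$ that moves $P$ to $(1:0:0)$ and its tangent to $\{z = 0\}$. Since the hyperflex property is preserved by plane isomorphisms, in the transformed equation $F(y,x,0)$ is a nonzero scalar multiple of $x^4$. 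This kills the coefficients of $y^4$, $y^3 x$, $y^2 x^2$, and $y x^3$ in $F$, so every remaining monomial that involves $y$ carries a factor of $z$; grouping by powers of $y$ produces the shape $F = (a_0 y^3 + a_1(x,z) y^2 + a_2(x,z) y) z - a_4(x,z)$ with $a_i$ homogeneous of degree $i$, and smoothness of $V(F)$ at $(1:0:0)$ forces $a_0 \neq 0$.

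For the relation, I would apply Lemma~\ref{lem:normalize}(c) to $V(F_0)$ and the normalized $V(F)$, both of which now share $(1:0:0)$ and the tangent $\{z = 0\}$: the $K$-isomorphism between them is induced by an upper triangular $T' \in \GL_3(K)$ acting as $y \mapsto t_{11} y + \ell(x, z)$, $x \mapsto t_{22} x + t_{23} z$, $z \mapsto t_{33} z$ with $\ell = t_{12} x + t_{13} z$. Expanding $(t_{11} y + \ell)^3 t_{33} z$ in $F_0 \circ T'$ and reading off the coefficients of $y^3 z$, $y^2 z$, $y z$ gives $a_0, a_1, a_2$ proportional to $t_{33} t_{11}^3$, $3 t_{33} t_{11}^2 \ell$, $3 t_{33} t_{11} \ell^2$ respectively, so $a_1^2 = 9 t_{33}^2 t_{11}^4 \ell^2 = 3 a_0 a_2$ by inspection. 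For the converse, I would complete the cube in~\eqref{eq:longweq} by substituting $y \mapsto y - a_1/(3 a_0)$: the coefficient of $y^2 z$ always vanishes, while that of $y z$ becomes $a_2 - a_1^2/(3 a_0)$, which vanishes precisely under~\eqref{eq:longweq_rel}, leaving $a_0 (y')^3 z = a_4 + a_1^3 z/(27 a_0^2)$, a short Weierstrass equation of the form~\eqref{eq:shortweqproj}. The main obstacle is carrying out the normalization integrally so that the resulting transformation lies in $\GL_3(\OO_K)$, which is exactly what parts (a) and (b) of Lemma~\ref{lem:normalize} were constructed to do; once that is in place, the identity $a_1^2 = 3 a_0 a_2$ is a direct algebraic consequence of the upper triangular shape of $T'$.
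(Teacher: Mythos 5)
Your proof is correct and follows essentially the same route as the paper's: normalize the distinguished rational point and its tangent line integrally via Lemma \ref{lem:normalize}(a),(b), then compare with the short Weierstrass model through the upper-triangular rigidity of Lemma \ref{lem:normalize}(c) to read off $a_0, a_1, a_2$ proportional to $\lambda^3, 3\lambda^2\mu, 3\lambda\mu^2$ and hence \eqref{eq:longweq_rel}. The only (harmless) difference is that you derive the shape \eqref{eq:longweq} from the hyperflex/tangency-order argument rather than directly from the expansion of the substituted short Weierstrass form, which is what the paper does.
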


\begin{proof}
  The nonspecial Picard curve $Y$ has a distinguished rational point $P$,
  corresponding to the point $(1 : 0 : 0)$ in \eqref{eq:shortweqproj}. By (a)
  and (b) in Lemma \ref{lem:normalize}, we may apply an integral transformation
  to suppose that the point on the curve defined by $F$ corresponding to $P$ is
  again given by $(1 : 0 : 0)$, with tangent line $z = 0$.

  Part (c) of the same lemma then shows that $F$ can be obtained from a short
  Weierstrass equation by a substitution $y \mapsto \lambda y + \mu (x, z)$.
  This yields an equation of the form \eqref{eq:longweq}. The relation
  \eqref{eq:longweq_rel} is satisfied because $a_0 = \lambda^3, a_1 = 3
  \lambda^2 \mu, a_2 = 3 \lambda \mu^2$. The converse statement can be checked
  directly.
\end{proof}

\begin{definition}
  Given a Picard curve $Y$, we call an equation for $Y$ of the form
  \eqref{eq:longweq} a \emph{long Weierstrass equation} for $Y$.
\end{definition}

\begin{proposition}\label{prop:minlong}
  Let $Y$ be a nonspecial Picard curve over the discretely valued field $(K,
  v)$. Then $Y$ admits an integral long Weierstrass equation of minimal
  discriminant exponent.
\end{proposition}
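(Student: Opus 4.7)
The plan is to observe that this proposition is essentially a direct consequence of Lemma \ref{lem:minlong} combined with the weight property of the discriminant in Proposition \ref{prop:discprop}(d). The key point is that the transformation provided by Lemma \ref{lem:minlong} lies in $\GL_3(\OO_K)$, hence has unit determinant, which simultaneously preserves both integrality of the defining form and the valuation of its discriminant.

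First, by the very definition of the minimal discriminant exponent $e_v(Y)$, there exists an integral ternary quartic form $F \in \OO_K[y,x,z]$ which defines a model of $Y$ and satisfies $v(\Delta(F)) = e_v(Y)$. Next, I would apply Lemma \ref{lem:minlong} to this particular $F$: the lemma produces a matrix $T \in \GL_3(\OO_K)$ such that $F' := F \circ T$ is a long Weierstrass equation of the form \eqref{eq:longweq}. Since $T$ has entries in $\OO_K$ and $F \in \OO_K[y,x,z]$, the new form $F'$ is automatically integral, so this is genuinely an integral long Weierstrass equation for $Y$.

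It remains to check that the discriminant exponent has not increased. Because $T \in \GL_3(\OO_K)$, its determinant is a unit in $\OO_K$, so $v(\det T) = 0$. Proposition \ref{prop:discprop}(d) then gives
\begin{equation}
  v(\Delta(F')) \;=\; v(\Delta(F)) + 36\, v(\det T) \;=\; e_v(Y),
\end{equation}
so $F'$ attains the minimal discriminant exponent, completing the proof.

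Honestly, there is no substantial obstacle here: all the real work is already absorbed into Lemma \ref{lem:normalize} (which produces the integral transformations normalizing the distinguished point and its tangent line) and Lemma \ref{lem:minlong} (which assembles these into an integral transformation to long Weierstrass form). The content of the present proposition is just that carrying out this normalization costs nothing at the level of the discriminant valuation, precisely because every transformation used is integral with integral inverse.
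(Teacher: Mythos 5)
Your argument is correct and is essentially identical to the paper's proof, which likewise starts from an integral form of minimal discriminant exponent, applies the integral transformation from Lemma \ref{lem:minlong}, and invokes Proposition \ref{prop:discprop}.(d) to see that a transformation in $\GL_3(\OO_K)$ preserves both integrality and the discriminant valuation. The paper states this in two sentences; you have simply written out the same steps more explicitly.
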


\begin{proof}
  One follows the proof of Lemma \ref{lem:minlong}. By Proposition
  \ref{prop:discprop}.(d), the integral transformation in the proof of this
  lemma preserves the minimality of the discriminant of $F$.
\end{proof}

\begin{remark}\label{rem:elsenhans}
  An equation with minimal discriminant for a given Picard curve $Y$ over $\QQ$
  can often be found explicitly by using the algorithms in \cite{elsenhans}
  based on \cite{Kollar}. Our methods then yield a long Weierstrass equation
  for $Y$. Occasionally, though, Elsenhans' algorithms will only return
  equations that are locally minimal in the Bruhat--Tits tree corresponding to
  the integral models of $Y$.
\end{remark}

\begin{remark}
  Integral long Weierstrass equations with the same discriminant exponent need
  \emph{not} be related by an integral transformation, essentially because one
  may be able to divide out scalars after substitution. For example, for $p
  \neq 2$ the equation
  \begin{equation}
    Y_1 :\; p y^3 z = p x^4 + 2 x^2 z^2 + p^3 x z^3 - p^4 z^4
  \end{equation}
  can be transformed into
  \begin{equation}
    Y_2 :\; y^3 z = p^5 x^4 + 2 x^2 z^2 + p x z^3 - z^4
  \end{equation}
  by substituting $p y$ for $y$ and $p^2 x$ for $x$. This implies that none of
  the isomorphisms $Y_1 \to Y_2$ is defined by an element of $\GL_3 (\OO_K)$. Yet
  both of these equations have discriminant valuation $25$ at $p$.
\end{remark}

\begin{theorem}\label{thm:minshort}
  Let $Y$ be a nonspecial Picard curve over a discretely valued field $(K, v)$
  whose residue characteristic does not equal $3$. Then $Y$ admits an integral
  short Weierstrass equation \eqref{eq:minshort} of minimal discriminant
  exponent.
\end{theorem}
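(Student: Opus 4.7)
The plan is to start from the integral long Weierstrass equation
\[
F_0 = (a_0 y^3 + a_1(x,z) y^2 + a_2(x,z) y) z - a_4(x,z)
\]
of minimal discriminant provided by Proposition \ref{prop:minlong}, and complete the cube via $y \mapsto y - a_1/(3 a_0)$. Since the residue characteristic is not $3$, the factor $3$ is a unit in $\OO_K$, so this substitution is a well-defined shear of Jacobian determinant $1$. It transforms $F_0$ into $F_1 = a_0 y^3 z - g(x,z)$ with $g = (a_1^3/(27 a_0^2)) z + a_4$, which is already of the shape \eqref{eq:minshort} with $b = a_0$, and the shear preserves the discriminant. Thus it suffices to show that $F_1$ is integral.

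The main obstacle is that $a_0$ need not be a unit: if $v(a_0) > 0$ then neither $a_1/(3 a_0)$ nor the coefficients of $a_1^3/(27 a_0^2)$ are \emph{a priori} integral. The key lemma I would prove is that $v(a_0) \le 1$ in any minimal integral long Weierstrass equation. Suppose for contradiction that $v(a_0) \ge 2$. The relation $a_1^2 = 3 a_0 a_2$ together with $v(3) = 0$ and $v(a_2) \ge 0$ forces $v(a_1) \ge v(a_0)/2 \ge 1$. Then substituting $y \mapsto y/\pi$ and rescaling the equation by $\pi$ yields
\[
F_0' = \bigl( (a_0/\pi^2) y^3 + (a_1/\pi) y^2 + a_2 y \bigr) z - \pi a_4,
\]
which is again long Weierstrass (the relation $(a_1/\pi)^2 = 3 (a_0/\pi^2) a_2$ persists) and integral. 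By Proposition \ref{prop:discprop}(c)--(d) the discriminant valuation changes by $36 \cdot (-1) + 27 \cdot 1 = -9$, contradicting the minimality of $F_0$.

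With $v(a_0) \in \{0, 1\}$ secured, I verify integrality of the cube substitution and of $g$. The case $v(a_0) = 0$ is immediate. For $v(a_0) = 1$, the parity of $2 v(a_1) = v(a_0) + v(a_2) = 1 + v(a_2)$ forces $v(a_2) \ge 1$ and hence $v(a_1) \ge 1$; this gives $v(a_1/(3 a_0)) \ge 0$, and each coefficient of $a_1^3/(27 a_0^2)$ has valuation at least $3 v(a_1) - 2 v(a_0) \ge 1$. Therefore $F_1$ is integral, and since the substitution has determinant $1$, $v(\Delta(F_1)) = v(\Delta(F_0)) = e_v(Y)$. This produces the desired integral short Weierstrass equation \eqref{eq:minshort} of minimal discriminant exponent.
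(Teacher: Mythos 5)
Your proof is correct, and its overall architecture is the same as the paper's: start from the minimal integral long Weierstrass equation of Proposition \ref{prop:minlong}, rule out $v(a_0)\ge 2$ by exhibiting an integral model of strictly smaller discriminant exponent, and then complete the cube via $y\mapsto y-a_1/(3a_0)$, using the relation $a_1^2=3a_0a_2$ and $v(3)=0$ to check integrality when $v(a_0)\in\{0,1\}$. Your computation that the completed-cube form is $a_0y^3z - \bigl(a_1^3/(27a_0^2)\bigr)z - a_4$ and your parity argument forcing $v(a_1)\ge 1$ when $v(a_0)=1$ are both right.

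The one place where you genuinely diverge is the descent step for $v(a_0)\ge 2$. The paper substitutes $y\mapsto \pi y$, $z\mapsto \pi z$ and divides the resulting form by $\pi^3$, for a net change of $2\cdot 36-3\cdot 27=-9$ in the discriminant exponent; this requires the substituted form to have Gauss valuation at least $3$, which imposes conditions not only on $a_0,a_1$ but also on $a_2$ and on the coefficients of $a_4$ (e.g.\ the coefficient of $x^4$ is untouched by the substitution). Your version substitutes $y\mapsto y/\pi$ and multiplies by $\pi$, for a net change of $-36+27=-9$, and its integrality needs only $v(a_0)\ge 2$ and $v(a_1)\ge 1$, both of which you derive from \eqref{eq:longweq_rel}. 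So your variant reaches the same contradiction under strictly weaker, and fully verified, hypotheses; it is the tighter way to run this step.
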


\begin{proof}
  Extend $v$ to the Gauss valuation for ternary polynomials, and consider an
  integral equation \eqref{eq:longweq} of $Y$ whose discriminant is minimal.
  First suppose that $v (a_0) = 0$. Then the integral coordinate change $y
  \mapsto y - (a_1 (x, z)/ (3 a_0))$ gives an equation of the form
  \eqref{eq:minshort}, and we are done. If $v (a_0) = 1$, then $v (a_1) \geq 1$
  by \eqref{eq:longweq_rel}, and the same argument applies.

  Now suppose that $v (a_0) \ge 2$. Then $v (a_1) \ge 1$ by
  \eqref{eq:longweq_rel}. In this case substituting $y \mapsto \pi y$ and $z
  \mapsto \pi z$ yields a ternary form with Gauss valuation at least $3$. Since
  the discriminant has degree $27$ and weight $36$, dividing out the common
  factor of the coefficients yields a new integral equation whose discriminant
  exponent is at least $3 \cdot 27 - 2 \cdot 36$ smaller. This is in
  contradiction with our minimality assumption.
\end{proof}

\begin{remark}
  It is not true that if we suppose additionally that the residue
  characteristic of $\OO_K$ does not equal $2$, then $Y$ will admit an integral
  short Weierstrass equation of the more restricted form
  \begin{equation}\label{eq:traceless}
    b y^3 = c_0 x^4 + c_2 x^2 + c_3 x + c_4
  \end{equation}
  whose discriminant exponent is minimal. Note how this contrasts with the
  behavior of elliptic curves, which always admit a minimal Weierstrass
  equation $y^2 = x^3 + a x + b$ away from $2$ and $3$.

  For a counterexample, consider the curve
  \begin{equation}\label{eq:nonminshort}
    Y_1 : y^3 z = p x^4 + x^3 z + 2 x^2 z^2 + x z^3 - z^4
  \end{equation}
  for $p = 17$ (or large enough).  The exponent of the discriminant of
  \eqref{eq:nonminshort} at $p$ equals $3$, which then has to be the minimal
  discriminant exponent at $p$ because of (c) and (d) of Proposition
  \ref{prop:discprop}.  A nonintegral equation of the form \eqref{eq:traceless}
  is obtained by substituting $x\mapsto x - (z / 4 p)$, which yields
  \begin{equation}
    Y_2 :\; y^3 z = p x^4 + \frac{269}{2^3 p} x^2 z^2
                  - \frac{2177}{2^3 p^2} x z^3
                  + \frac{1275683}{2^8 p^3} z^4
  \end{equation}
  Because of Lemma \ref{lem:isos}.(b), all isomorphisms between curves defined
  by equations of the form \eqref{eq:traceless} are induced by diagonal
  matrices. This makes finding an integral equation of the form
  \eqref{eq:traceless} with minimal discriminant (among all equations of that
  particular form) into a problem in linear programming. In this particular
  case, the minimal discriminant valuation at $p$ is attained by
  \begin{equation}
    Y_3 :\; y^3 z = x^4 - \frac{269}{2^3} x^2 z^2
                  - \frac{2177}{2^3} x z^3 + \frac{1275683}{2^8} z^4.
  \end{equation}
  It equals $12$, which is strictly larger than the valuation $3$ obtained by
  using the more general class of equations \eqref{eq:minshort}.
\end{remark}

\subsection{Criteria for good reduction}\label{sec:badred}

Let $(K, v)$ be a discretely valued field with $\Char (K) \neq 3$.  In this
section we only deal with questions local to $v$, and therefore can and will
assume that $K$ is complete with respect to $v$.

\begin{definition}\label{def:goodred}
  Let $Y$ be a Picard curve over $K$. We say that $Y$ has \emph{good reduction}
  at $v$ if there exists a smooth and proper $\OO_K$-model $\Y \to \Spec \OO_K$
  of $Y$.  If not, we say that $Y$ has \emph{bad reduction}. We say that $Y$
  has \emph{potentially good reduction} if there exists an extension $L/K$ such
  that $Y_L = Y\otimes_K L$ has good reduction.
\end{definition}

\begin{proposition}\label{prop:goodredternary}
  A Picard curve $Y$ over $K$ has good reduction if and only if $v
  (\Delta^{\min} (Y)) = 0$.
\end{proposition}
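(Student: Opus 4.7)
\emph{Proof plan.}

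The direction $(\Leftarrow)$ is immediate. If there exists an integral ternary quartic $F \in \OO_K[y,x,z]$ defining $Y$ with $v(\Delta(F)) = 0$, then Proposition \ref{prop:discprop}(b) shows that $\bar F$ cuts out a smooth plane quartic over $k$. The closed subscheme $V(F) \subset \PP^2_{\OO_K}$ is then flat and proper over $\OO_K$ with smooth special fiber, yielding a smooth proper model of $Y$.

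For $(\Rightarrow)$, suppose $Y$ admits a smooth proper model $\Y \to \Spec \OO_K$. I would use the canonical embedding of $\Y$ to produce an integral ternary quartic with unit discriminant. Since $\OO_K$ is a DVR and the dimensions of $H^0$ and $H^1$ are constant in a flat family of smooth genus $3$ curves, cohomology and base change imply that $\pi_* \omega_{\Y/\OO_K}$ is a free $\OO_K$-module of rank $3$. An $\OO_K$-basis of global sections determines a morphism $\Y \to \PP^2_{\OO_K}$ whose generic fiber is the canonical embedding of the nonhyperelliptic curve $Y$ (by Corollary \ref{cor:nothyper}), hence a closed immersion. If the same holds on the special fiber, the map $\Y \to \PP^2_{\OO_K}$ is a closed immersion (e.g.\ by the fiberwise criterion together with properness), and its image is cut out by an integral ternary quartic $F$ whose reduction is smooth; Proposition \ref{prop:discprop}(b) then gives $v(\Delta(F)) = 0$, so $v(\Delta^{\min}(Y)) = 0$.

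The crucial step is thus to show that $\Y_k$ is again nonhyperelliptic, which by Corollary \ref{cor:nothyper} will follow once we know that $\Y_k$ is itself a Picard curve in the sense of Definition \ref{def:Picard}. After replacing $K$ by a finite unramified extension (which changes neither the hypothesis of good reduction nor the equality $v(\Delta^{\min}(Y)) = 0$), the distinguished order-$3$ subgroup of $\Aut_{\Kbar}(Y_{\Kbar})$ descends to an automorphism $\sigma$ of $Y$. By the uniqueness of smooth proper models of curves of genus $\ge 2$, $\sigma$ extends to $\Y$ and restricts to $\bar\sigma \in \Aut(\Y_k)$. When $\Char(k) \ne 3$, the group scheme $\langle\sigma\rangle$ is étale over $\OO_K$, so $\bar\sigma$ has order $3$; its fixed locus on $\Y$ is also étale, so it specializes to $5$ distinct points on $\Y_k$, and Riemann--Hurwitz forces $\Y_k/\bar\sigma$ to have genus $0$. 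The main obstacle is the residue characteristic $3$ case, where $\bar\sigma$ can be inseparable and Riemann--Hurwitz must be refined, but the same conclusion can be reached via the stable reduction analysis of \cite{superell}, or by a direct verification using the explicit long Weierstrass model of minimal discriminant exponent provided by Proposition \ref{prop:minlong}.
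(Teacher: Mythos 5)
Your overall architecture coincides with the paper's: both directions hinge on Proposition \ref{prop:discprop}, and the forward direction proceeds by showing that the special fiber of the smooth model is nonhyperelliptic, so that the relative canonical sheaf embeds $\Y$ as a plane quartic over $\OO_K$ whose reduction is smooth and hence has unit discriminant. Your treatment of $\Char(k) \neq 3$ --- specializing the order-$3$ automorphism and counting its fixed points --- is a legitimate expansion of the paper's one-line conclusion that $\Ybar$ is then itself a Picard curve, hence nonhyperelliptic by Corollary \ref{cor:nothyper}. Two small points: you neither need, nor can you always arrange, that the distinguished subgroup descends over an \emph{unramified} extension; any finite extension suffices, since good reduction and the geometric nonhyperellipticity of the special fiber are insensitive to ramified base change (the paper instead invokes \cite[Proposition 10.3.38]{liu} to specialize automorphisms defined over $\Kbar$ directly). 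Also, an automorphism is never ``inseparable''; what fails in residue characteristic $3$ is tameness of the quotient map.

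The genuine gap is exactly where you flag it: residue characteristic $3$. You offer two escape routes and neither is a proof. Deferring to ``the stable reduction analysis of \cite{superell}'' is an unexecuted appeal to authority. Worse, the suggestion to use the minimal long Weierstrass model of Proposition \ref{prop:minlong} is circular: the entire content of the forward implication is that the minimal discriminant exponent vanishes, so you cannot assume that the reduction of a minimal long Weierstrass equation is smooth, nor that it has anything to do with the special fiber $\Y_k$ of the smooth model. The paper closes this case concretely: the specialized automorphism $\bar\sigma$ still has order $3$ and satisfies $g(\Ybar/\langle\bar\sigma\rangle) = 0$; the Riemann--Hurwitz formula in the wild setting leaves only two possibilities for the number of branch points of $\Ybar \to \PP^1$ (one or two), and Artin--Schreier theory then pins down the equations $y^3 - y = x^4$ and $x y^3 - x y = g(x)$ respectively, both visibly nonhyperelliptic (e.g.\ because they are smooth plane quartics). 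Some such explicit argument is needed to complete your proof.
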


\begin{proof}
  First assume that $v (\Delta^{\min} (Y))= 0$. Let $F \in \OO_K [y,x,z]$ be a
  quartic form with $v (\Delta (F)) = 0$. Then $F$ defines a model $\Y_F$ of
  $Y$ over $\OO_K$. Proposition \ref{prop:discprop}  shows that $\Y_F$ is
  smooth.

  Now assume that $Y$ has good reduction. Let $\Y$ be a smooth model of $Y$
  with smooth special fiber $\Ybar$. We show that $\Ybar$ is nonhyperelliptic.
  Since this property of $\Ybar$, as well as its negation, is stable under base
  extension, we may assume that $k = \kbar$ is algebraically closed. Then
  \cite[Proposition 10.3.38]{liu} shows that $\Aut_{\Kbar} (Y_{\Kbar})$ can be
  considered as a subgroup of $\Aut_{\kbar} (\Ybar_{\kbar})$. More precisely,
  $\Aut_{\kbar} (\Ybar_{\kbar})$ contains a cyclic subgroup $G$ of order $3$
  such that $g(\overline{Y} / G) = 0$.

  If $\Char (k) \neq 3$, then we conclude that $\Ybar$ is a Picard curve, so
  that it is nonhyperelliptic by Corollary \ref{cor:nothyper}. If $\Char (k) =
  3$, then the Riemann--Hurwitz formula implies that there are two
  possibilities for the cover $\pi : \overline{Y} \to \overline{Y} / G \simeq
  \PP^1$. In the former case $\pi$ has a unique branch point.
  Artin--Schreier theory implies that $\overline{Y}$ admits an equation $y^3 - y
  = x^4$ over $\kbar$. In the latter case $\pi$ has two branch points and over
  $\kbar$ the curve $\overline{Y}$ admits an equation of the form $x y^3 - x y
  = g (x)$ for some polynomial $g$ of degree $3$. So also in this case
  $\overline{Y}$ is nonhyperelliptic.

  As at the beginning of \cite[\S 2.1]{LLR}, we now use the relative canonical
  sheaf of $\Y$ to obtain a proper $\OO_K$-morphism $\phi : \Y \to \PP^2$. If
  $\phi$ is not a smooth embedding, then \loccit\ shows that its special fiber
  is a degree $2$ map to a conic over $k$, so that $\Ybar$ is hyperelliptic. We
  have just excluded this possibility, so that $\phi$ induces an isomorphism of
  $\Y$ with its image in $\PP^2$. By dimension theory, this image is defined by
  a single ternary quartic form $F \in \OO_K [y,x,z]$. Since both $F$ and its
  reduction define a smooth curve, we can once more invoke Proposition
  \ref{prop:discprop} to conclude that $v (\Delta (F)) = 0$ and therefore $v
  (\Delta^{\min} (Y)) = 0$.
\end{proof}

\begin{remark}
  Contrary to Picard curves, a general plane quartic curve over $K$ with good
  reduction may not admit a smooth plane quartic model over $\OO_K$, as it may
  have good reduction that is hyperelliptic. Necessary and sufficient
  conditions for this to happen are given in \cite{LLR}.
\end{remark}

Our results also give a criterion for bad reduction that gives an alternative
viewpoint of the result in \cite[Proposition 3.4]{Picard1}.

\begin{proposition}\label{prop:badredwild}
  Let $Y$ be a nonspecial Picard curve over a discretely valued field $(K, v)$
  for which $3$ has odd valuation. Let $Y$ be a nonspecial Picard curve $K$.
  Then $Y$ does not have good reduction.
\end{proposition}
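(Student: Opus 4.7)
The plan is to assume for contradiction that $Y$ has good reduction and derive a parity inconsistency from the discriminant formula of Lemma~\ref{lem:disc_nonspecial}.

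By Proposition~\ref{prop:goodredternary} combined with Proposition~\ref{prop:minlong}, there is an integral long Weierstrass equation
\[
F :\; (a_0 y^3 + a_1(x,z)\, y^2 + a_2(x,z)\, y)\, z = a_4(x,z)
\]
for $Y$ with $v(\Delta(F)) = 0$. Writing $a_4 = c_0 x^4 + c_1 x^3 z + \cdots + c_4 z^4$, the coefficient $c_0 \in \OO_K$ is, up to sign, the coefficient of $x^4$ in $F$, since $a_0, a_1, a_2$ contribute no $x^4$-term. Next I would apply the shear $y \mapsto y - a_1/(3 a_0)$, an element of $\GL_3(K)$ of determinant $1$ (though not necessarily integral). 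As in the proof of Theorem~\ref{thm:minshort}, this transforms $F$ into a short Weierstrass equation $a_0 y^3 z = \tilde f(x, z)$ whose leading coefficient in $x$ is again $c_0$, since the shear only affects the $y$-coordinate.

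Because $\det T = 1$, Proposition~\ref{prop:discprop}(d) gives $v(\Delta(F_{\rm short})) = v(\Delta(F)) = 0$, and Lemma~\ref{lem:disc_nonspecial} then yields
\[
0 \;=\; 9\, v(3) + 12\, v(a_0) + 3\, v(c_0) + 2\, v(\Delta(\tilde f)).
\]
Reducing modulo $2$ gives $v(3) + v(c_0) \equiv 0 \pmod 2$, so the hypothesis that $v(3)$ is odd forces $v(c_0)$ to be odd as well; since $c_0 \in \OO_K$, this means $v(c_0) \geq 1$.

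To conclude, every monomial of $F$ other than the $x^4$-term of $-a_4$ is divisible by $z$, so $v(c_0) \geq 1$ implies that the reduction $\bar F \in k[y, x, z]$ is divisible by $\bar z$. The special fiber cut out by $\bar F$ is therefore nonreduced, contradicting $v(\Delta(F)) = 0$ via Proposition~\ref{prop:discprop}(b). The one point demanding care is the use of the nonintegral shear: although it may destroy integrality of $F$, what matters is that it simultaneously preserves the discriminant valuation and the integral $x^4$-coefficient $c_0$, and these are the only two features that the parity argument uses.
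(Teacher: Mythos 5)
Your proof is correct, but it takes a genuinely different route from the paper's. Both arguments begin identically (an integral long Weierstrass equation with $v(\Delta(F))=0$, via Propositions \ref{prop:goodredternary} and \ref{prop:minlong}) and both are at heart parity arguments exploiting the odd valuation of $3$, but they apply the parity to different identities. The paper stays entirely with the integral equation \eqref{eq:longweq}: nonsingularity of the reduction at the distinguished point $(1:0:0)$ forces $v(a_0)=0$, and then the relation $a_1^2 = 3a_0a_2$ of \eqref{eq:longweq_rel} forces $v(a_1)>0$ and, by parity, $v(a_2)>0$, so the special fiber is of the form $y^3=f(x)$ --- which is never smooth in residue characteristic $3$. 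You instead pass through the (possibly nonintegral) shear to a short Weierstrass form and read the parity off the discriminant formula of Lemma \ref{lem:disc_nonspecial}, concluding that the $x^4$-coefficient $c_0$ has odd, hence positive, valuation, so that $\bar z$ divides $\bar F$. Your bookkeeping around the nonintegral shear is exactly right: the determinant-one transformation preserves $v(\Delta)$ by Proposition \ref{prop:discprop}(d), and $\tilde f - a_4$ is divisible by $z$, so $c_0$ survives. What your route buys is that you never need the characteristic-$3$ fact that $y^3=f(x)$ is inseparable and singular; what it costs is the detour through $\GL_3(K)$ rather than $\GL_3(\OO_K)$. One small imprecision: when $\bar z \mid \bar F$ the special fiber is \emph{reducible} (the line $\bar z = 0$ splits off and meets the residual cubic by B\'ezout) rather than necessarily nonreduced; either way it is singular, so the contradiction with Proposition \ref{prop:discprop}(b) stands.
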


\begin{proof}
  Suppose that $Y$ has good reduction. As in the proof of Proposition
  \ref{prop:goodredternary}, we obtain an integral equation
  \eqref{eq:longweq} whose discriminant valuation equals $0$ and whose
  reduction modulo $3$ is nonsingular. We have $v (a_0) = 0$ by nonsingularity
  of the reduction. The equation \eqref{eq:longweq_rel} then implies that $v
  (a_1) > 0$, and because $3$ has odd valuation, this in turn implies that $v
  (a_2) > 0$. This implies that the reduction is defined by an equation of the
  form $y^3 = f (x)$, which is a contradiction since such an equation does not
  define a smooth curve in characteristic $3$.
\end{proof}

\section{Nonspecial Picard curves as superelliptic curves}\label{sec:goodred}

Instead of minimizing the ternary form $F$ defining the Picard curve in
\eqref{eq:shortweqproj}, we can also  reduce the binary form $f$ figuring in
\eqref{eq:shortweq}. Indeed, Theorem \ref{thm:minshort} states the existence of
an integral short Weierstrass equation with minimal discriminant, but does not
provide an easy-to-use criterion for recognizing such minimal equations.

In this section we follow an alternative approach that modifies the binary form
$f$ from \eqref{eq:shortweqproj} rather than the whole equation. While this
approach does not necessarily find an equation with minimal discriminant, the
algorithm is far simpler, and suffices for the purpose of recognizing Picard
curves with good reduction, see Section \ref{sec:goodred2}. Moreover, an
extension of this approach can be used to calculate the stable reduction in the
case of bad reduction. This approach works in principle for all superelliptic
curves, and does not use that the curve is a complete intersection. We refer to
\cite{superell} for more details.

\subsection{Reducing binary quartics}\label{sec:binred}

Let $K$ be a field  and let $Y_1$ and $Y_2$ be two nonspecial Picard curves
over $K$. Lemma \ref{lem:isos} implies that isomorphisms over $K$ between $Y_1$
and $Y_2$ induce isomorphisms between the branch divisors $\hat{D_i}$ of the
degree-$3$ quotient maps $\phi: Y_1 \to X_1 = \PP^1_x$. One of the branch
points is distinguished, see Remark \ref{rem:distpoint}.  We call $X$ together
with a $(4,1)$-divisor a \emph{$(4,1)$-marked projective line} over $K$.

In what follows we choose a coordinate on $X$ such that the distinguished point
is $x=\infty$. Write $D$ for the finite part of the divisor $\hat{D}$. Then
there exists a unique monic quartic $f\in K [x]$  such that $D=(f)_0$ is the
divisor of zeros of $f$. To study these, we make the following definitions.

\begin{definition}
  \begin{enumerate}[(a)]
    \item A \emph{quartic} over $K$ is a monic and separable quartic polynomial
      $f = x^4 + c_1 x^3 + c_2 x^2 + c_3 x + c_4 \in K[x].$
    \item Two quartics $f_1, f_2\in K[x]$ are \emph{equivalent} if there exists
      a matrix
      \begin{equation}\label{eq:A}
        A = \begin{pmatrix}
              \alpha & \beta \\ 0 & 1
            \end{pmatrix}.
      \end{equation}
      such that
      \begin{equation}\label{eq:quartic_equiv}
        f_2=f_1 \circ A  := \alpha^{-4} f_1(\alpha x + \beta) .
      \end{equation}
  \end{enumerate}
\end{definition}

The factor $\alpha^{-4}$ is inserted in \eqref{eq:quartic_equiv} to ensure that
$f_1 \circ A$ is again monic.

Now again let $v$ be a discrete valuation on $K$ with valuation ring $\OO_K$
and residue field $k$.

\begin{definition}\label{def:binred}
  A quartic $f \in K[x]$ is called \emph{reduced} with respect to $v$ if the
  following holds.
  \begin{enumerate}[(a)]
    \item $f \in \OO_K[x]$.
    \item If $g \in \OO_K[x]$ is equivalent to $f$, then $v (\Delta(f)) \leq
      v (\Delta(g))$.
  \end{enumerate}
\end{definition}

For a quartic $f \in K[x]$ we shall write $-\lambda(f)\in\QQ$ for the largest
slope of the Newton polygon of $f$ with respect to $v$. In other words,
\begin{equation}
  \lambda (x^4 + c_1 x^3 + c_2 x^2 + c_3 x + c_4)
  = \min_{1 \leq i \leq 4} \frac{v (c_i)}{i}.
\end{equation}
We have that $\lambda(f)$ is the minimum of the valuations of the roots of $f$
in $\Kbar$, and that $f \in \OO_K[x]$ if and only if $\lambda(f) \geq 0$.

\begin{lemma}\label{lem:reduced_quartic}
  A quartic $f \in K[x]$ is reduced if both of the following conditions hold.
  \begin{enumerate}[(a)]
  \item $0 \leq \lambda(f) < 1$.
  \item If $\lambda(f) = 0$, then the image of $f$ in $k [x]$ is not a $4$th
    power.
  \end{enumerate}
\end{lemma}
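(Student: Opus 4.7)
The key observation is the transformation law $\Delta(g) = \alpha^{-12}\Delta(f)$ whenever $g = f \circ A$ with $A = \begin{pmatrix}\alpha & \beta \\ 0 & 1\end{pmatrix}$; this follows by a direct computation from the fact that $g$ has roots $s_i = (r_i - \beta)/\alpha$ whenever $f$ has roots $r_i \in \Kbar$. Since $f \in \OO_K[x]$ by condition (a), the first clause of Definition \ref{def:binred} is automatic, so reducedness of $f$ amounts to showing that for every $A$ with $g := f \circ A \in \OO_K[x]$ one has $v(\alpha) \le 0$, equivalently $v(\Delta(g)) \ge v(\Delta(f))$.

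I would argue the contrapositive. Assume $v(\alpha) \ge 1$. Since $g$ is monic with integral coefficients, each of its roots $s_i \in \Kbar$ has nonnegative valuation, giving
\begin{equation*}
  v(r_i - \beta) \ge v(\alpha) \ge 1 \quad \text{for every root } r_i \text{ of } f.
\end{equation*}
The remaining argument splits into two cases, both exploiting that $v(\beta) \in \ZZ$ while $v(r_i)$ lies in $\QQ$ via the unique extension of $v$ to $\Kbar$.

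If $\lambda(f) = 0$, pick a root $r$ with $v(r) = 0$; then $v(r-\beta) \ge 1$ forces $v(\beta) = 0$ and $\overline{r} = \overline{\beta}$ in $k$. For any other root $r_j$ with $v(r_j) > 0$ one would get $v(r_j - \beta) = v(\beta) = 0$, contradicting the inequality above. Hence every root satisfies $v(r_i) = 0$ and $\overline{r_i} = \overline{\beta}$, so the reduction of $f$ equals $(x - \overline{\beta})^4 \in k[x]$, a $4$th power, contradicting condition (b). If $0 < \lambda(f) < 1$, pick a root $r$ with $v(r) = \lambda(f)$; since $v(\beta) \in \ZZ$ cannot equal $v(r) \in (0,1)$, one has $v(r-\beta) = \min(v(r), v(\beta))$, which is at most $0$ if $v(\beta) \le 0$ and equals $v(r) < 1$ if $v(\beta) \ge 1$. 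Either way, $v(r-\beta) < 1$, contradicting the displayed inequality.

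The main obstacle is nothing deep: it is just the bookkeeping in the case analysis above. Beyond that, the proof uses only the integrality of roots of monic integral polynomials and the observation that $v$ takes integer values on $K^\times$ while its extension to $\Kbar^\times$ can attain strictly fractional rational values, which is what forces a ``valuation-zero'' root to have the same reduction as $\beta$ in Case 1 and which prevents any cancellation from happening at the smallest root in Case 2.
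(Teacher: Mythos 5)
Your proof is correct and follows essentially the same route as the paper's: both pass to the roots in a splitting field, use that $w(r_i-\beta)\ge v(\alpha)\ge 1$ for integral $g$, and exploit the clash between $v(\beta)\in\ZZ$ and the (possibly fractional) valuation $\lambda(f)\in[0,1)$ of a smallest root to force $\lambda(f)=0$ and $\overline{f}=(x-\overline{\beta})^4$. The only cosmetic difference is that you split into the cases $\lambda(f)=0$ and $0<\lambda(f)<1$, whereas the paper handles both at once via the strict triangle inequality.
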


\begin{proof}
  We assume that $f$ is not reduced. Then there exist $\alpha, \beta \in K$,
  $\alpha \neq 0$, such that
  \begin{equation}
    g = \alpha^{-4} f (\alpha x + \beta) \in \OO_K[x], \quad
    v (\Delta (g)) = v (\Delta (f)) - 12 v (\alpha) < v (\Delta (f)).
  \end{equation}
  From this we obtain that $v (\alpha) \geq 1$ and $v (\beta) \geq 0$. Let
  $L/K$ be the splitting field of $f$ and let $w$ be an extension of $v$ to
  $L$. Write
  \begin{equation}
    f = \prod_{i=1}^4 (x - \xi_i).
  \end{equation}
  Then
  \begin{equation}
    g = \prod_{i=1}^4 \left(x - \frac{\xi_i - \beta}{\alpha}\right).
  \end{equation}
  Since $f$ and $g$ are both integral, we have
  \begin{equation} \label{eq:reduced_quartic3}
    w (\xi_i) \geq 0, \quad
    w \left( \frac{\xi_i - \beta}{\alpha} \right)
    = w (\xi_i - \beta) - v (\alpha) \geq 0,
  \end{equation}
  for all $i$. Using $v(\alpha) > 0$, we conclude that for all $i$ we also have
  \begin{equation} \label{eq:reduced_quartic4}
    w (\xi_i - \beta) > 0.
  \end{equation}

  We have to show that either (a) or (b) is false. Assume that (a) is true,
  \ie,
  \begin{equation}
    \lambda (f) = \min_i w (\xi_i) < 1.
  \end{equation}
  Then there exists some $j \in \{ 1, \ldots, 4\}$ such that $w (\xi_j) < 1$.
  Since $w (\beta) \geq 0$ and $w(\beta) = v(\beta) \in \ZZ$, the strict
  triangle inequality, applied to \eqref{eq:reduced_quartic4}, shows that $w
  (\xi_j) = w (\beta) = 0$. But then \eqref{eq:reduced_quartic4} also implies
  that $w (\xi_i) = 0$ for all $i$. We conclude that
  \begin{equation}
    \lambda (f)  = 0 \quad \text{ and } \quad
    \overline{f} = (x - \overline{\beta})^4.
  \end{equation}
  So (b) is false, and the lemma is proved.
\end{proof}

\begin{remark}
  Conditions (a) and (b) are sufficient but not necessary for $f$ to be
  reduced. For example, the quartic $f = (x + 1)^4 + p$ over $\QQ$ is reduced
  with respect to the $p$-adic valuation $v_p$, but (b) does not hold.
\end{remark}

Algorithmically, a given quartic can be reduced as follows.

\begin{algorithm}\label{alg:red}
  Let $f$ be a quartic polynomial over $K$. The following algorithm returns a
  reduced quartic equivalent to $f$.
  \begin{enumerate}[(a)]
    \item Set $n = \lfloor \lambda (f) \rfloor$ and replace $f$ by $\pi^{-4n} f
      (\pi^n x)$. Now $0 \leq \lambda(f) < 1$.
    \item Suppose $\lambda (f)=0$. Let $\overline{f} \in k[x]$ denote the image of
      $f$. If $\overline{f} = (x - \overline{a})^4$, with $\overline{a}\in
      k^\times$, replace $f$ by $f(x + a)$, for some lift $a\in \OO_K$ of
      $\overline{a}$, and go back to (a).
    \item Now $f$ is reduced, by Lemma \ref{lem:reduced_quartic}.
  \end{enumerate}
\end{algorithm}

Note that Algorithm \ref{alg:red} terminates after finitely many rounds because
$v (\Delta (f))$ gets strictly smaller each time we repeat Step (b).

The next proposition relates reduced quartics to good reduction.

\begin{definition}\label{def:markedPP1}
  We say that a $(4, 1)$-marked projective line $(X, \infty, D)$ over $K$ has
  \emph{good reduction} with respect to $v$ if there exists a smooth and proper
  $\OO_K$-model $\X \to \Spec \OO_K$ of $X$ such that the scheme-theoretic
  closure $\hat{\D} \subset \X$ of the divisor $\hat{D} = D \cup \{ \infty \}$
  is étale over $\Spec \OO_K$.
\end{definition}

\begin{proposition}\label{prop:goodredbq}
  Let $f \in K[x]$ be a quartic and $(X, \infty, D)$ the corresponding $(4,
  1)$-marked projective line. Then the following are equivalent:
  \begin{enumerate}[(a)]
    \item $(X, \infty, D)$ has good reduction with respect to $v$.
    \item The discriminant of the reduced quartic equivalent to $f$ is
      a unit in $\OO_K$.
  \end{enumerate}
\end{proposition}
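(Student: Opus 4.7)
The plan is to reduce both directions to the standard model $\X = \PP^1_{\OO_K}$ with $\infty$ equal to the standard section at infinity, and then to translate the étaleness of $\hat{\D}$ into an integrality-plus-separability condition on the defining polynomial of $D$ in the corresponding integral coordinate.

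For the implication (b)$\Rightarrow$(a), I would let $f'$ be a reduced quartic equivalent to $f$ with $v(\Delta(f')) = 0$. By definition the $K$-equivalence is a coordinate change $x = \alpha x' + \beta$ on $X$ that still sends the distinguished point to $\infty$, and in this new coordinate the divisor $D$ is cut out by $f'$. Taking $\X = \PP^1_{\OO_K}$ with respect to $x'$, the scheme-theoretic closure $\hat{\D}$ is $V(f') \sqcup \{\infty\}$. Since $f'$ is monic, integral, and has unit discriminant, $V(f')$ is étale over $\OO_K$ and lies inside $\AA^1_{\OO_K}$, hence is disjoint from the section $\infty$; this makes $\hat{\D}$ étale, so the marked projective line has good reduction.

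For the converse (a)$\Rightarrow$(b), let $\X$ be a smooth proper $\OO_K$-model of $X$ with $\hat{\D}$ étale. Since any smooth proper $\OO_K$-model of $\PP^1_K$ is isomorphic to $\PP^1_{\OO_K}$, and $\PGL_2(\OO_K)$ acts transitively on $\PP^1(\OO_K)$, I may choose an integral coordinate $x'$ on $\X$ identifying it with $\PP^1_{\OO_K}$ in such a way that the given section $\infty$ becomes the standard one. The coordinate change lies in $\PGL_2(K)$ and fixes $\infty$, so takes the affine form $x = \alpha x' + \beta$ for some $\alpha \in K^\times$, $\beta \in K$. Setting $g(x') = \alpha^{-4} f(\alpha x' + \beta)$ gives a monic quartic equivalent to $f$ that cuts out $D$ in the new coordinate. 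The hypothesis that $\hat{\D}$ is étale forces (i) every root of $g$ in $\Kbar$ to have nonnegative valuation, for otherwise its closure would pass through $\infty$ in the special fiber, and (ii) the reduction $\bar g \in k[x']$ to be separable. Together these give $g \in \OO_K[x']$ and $v(\Delta(g)) = 0$. For any reduced quartic $f'$ equivalent to $f$, the definition of reduced applied to the integral competitor $g$ yields $v(\Delta(f')) \le v(\Delta(g)) = 0$, and the integrality of $f'$ forces equality.

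The main technical obstacle is the reduction to the standard model, which requires both the uniqueness of the smooth proper $\OO_K$-model of $\PP^1_K$ and the transitivity of $\PGL_2(\OO_K)$ on integral sections in order to normalize both $\X$ and the section $\infty$ simultaneously. Once this normalization is in place, the remaining work is the standard translation between étaleness of the scheme-theoretic closure of a monic polynomial's zero set and the simultaneous integrality and separable reduction of that polynomial.
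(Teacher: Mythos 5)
Your proof is correct and follows essentially the same route as the paper's: normalize to the standard model $\PP^1_{\OO_K}$ with the marked point at the standard section at infinity, and translate \'etaleness of $\hat{\D}$ into integrality plus separable reduction of the defining quartic. The only differences are welcome explicit details that the paper leaves implicit, namely the identification of the smooth proper model with $\PP^1_{\OO_K}$ together with the $\PGL_2(\OO_K)$-normalization of the section, and the final passage from the integral competitor $g$ with unit discriminant to the reduced quartic via Definition \ref{def:binred}.
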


\begin{proof}
  Let us assume that $(X, \infty, D)$ has good reduction with respect to $v$,
  and let $\X$ be an $\OO_K$-model as in Definition \ref{def:markedPP1}. Then
  there exists an $\OO_K$-isomorphism $\X \cong \PP^1_{\OO_K}$ which sends
  $\infty$ to $\infty$. Let $g$ be the quartic (equivalent to $f$) whose
  divisor of zeros is the image of $D$ in $\PP^1_K$ under this isomorphism.
  Then $g \in \OO_K[x]$ (because the closure of $\infty$ in $\X$ is disjoint
  from the closure of $D$) and the image $\overline{g} \in k[x]$ of $g$ is
  separable (because the closure of $D$ is étale over $\Spec \OO_K$). It
  follows that $\Delta(g) \in \OO_K^\times$, which implies (b).

  Conversely, if (b) holds, then there exists a $g \in \OO_K[x]$, equivalent to
  $f$, such that $\Delta(g) \in \OO_K^\times$. Let $X \iso \PP^1_K$ be the
  isomorphism that sends $D$ to the zero divisor of $g$. It extends to an
  isomorphism $\X \iso \PP^1_{\OO_K}$, for a unique smooth $\OO_K$-model $\X$
  of $X$. The assumptions on $g$ now imply that the closure of $D \cup \{
  \infty \} \subset X$ in $\X$ is étale over $\Spec \OO_K$, and so (a) holds.
\end{proof}

Proposition \ref{prop:goodredbq} gives us a second way to optimize a short
Weierstrass equation \eqref{eq:shortweq} of a given Picard curve.

\begin{corollary}\label{cor:binred}
  Let $(K, v)$ be a discretely valued field and assume that both $\Char(K) \neq
  3$ and $\Char(k) \neq 3$. Every nonspecial Picard curve $Y/K$ admits an
  equation
  \begin{equation}\label{eq:binredweq}
    Y :\; y^3 = c f_0 (x) ,
  \end{equation}
  where $0 \le v (c) \le 2$ and $f_0\in \OO_K[x]$ is a reduced quartic.
\end{corollary}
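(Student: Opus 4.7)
\medskip\noindent\textbf{Proof plan.}
Since $Y$ is nonspecial, Theorem~\ref{thm:shortweq} provides a short Weierstrass equation $Y:\; y^3 = f(x)$ with $f \in K[x]$ a monic separable quartic. The plan is to apply Algorithm~\ref{alg:red} to $f$ to obtain a reduced quartic $f_0$ equivalent to $f$, and then to translate the quartic equivalence back into an explicit isomorphism of Picard curves via Lemma~\ref{lem:isos}(b), absorbing the scalar mismatch into a constant $c$ with $v(c)\in\{0,1,2\}$.

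In detail, Algorithm~\ref{alg:red} terminates and produces $f_0 = \alpha_0^{-4} f(\alpha_0 x + \beta_0)$ for some $\alpha_0 \in K^\times$ and $\beta_0 \in K$, with $f_0 \in \OO_K[x]$ reduced (Lemma~\ref{lem:reduced_quartic}). Inverting this relation yields
\begin{equation*}
  f(x) = \alpha_0^4\, f_0\!\left(\alpha_0^{-1}x - \beta_0/\alpha_0\right).
\end{equation*}
For any $\gamma \in K^\times$, set $c = \gamma^3 \alpha_0^4$. Then $c\, f_0(\alpha x + \beta) = \gamma^3 f(x)$ with $\alpha = \alpha_0^{-1}$ and $\beta = -\beta_0/\alpha_0$, so Lemma~\ref{lem:isos}(b) gives an isomorphism over $K$ between $y^3 = f(x)$ and $y^3 = c\, f_0(x)$. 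It remains to choose $\gamma$ so that $v(c) \in \{0,1,2\}$: writing $4\, v(\alpha_0) = 3q + r$ with $r\in\{0,1,2\}$ and taking $\gamma = \pi^{-q}$, we obtain $v(c) = 3\, v(\gamma) + 4\, v(\alpha_0) = r$, as required.

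The step that requires the most care is the bookkeeping in the second paragraph: one must keep track of how the (possibly nonintegral) parameters $\alpha_0,\beta_0$ produced by Algorithm~\ref{alg:red} interact with the freedom in $\gamma$ supplied by Lemma~\ref{lem:isos}, and verify that the leftover scalar $\gamma^3 \alpha_0^4$ can always be adjusted modulo cubes of $K^\times$ to land in the range $v(c)\in\{0,1,2\}$. The hypotheses $\Char(K)\neq 3$ and $\Char(k)\neq 3$ enter only via the prerequisite Theorem~\ref{thm:shortweq} and the ambient setup of the section; the reduction algorithm itself requires no condition on residue characteristic.
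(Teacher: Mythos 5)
Your proof is correct and follows essentially the same route as the paper: obtain a monic short Weierstrass equation from Theorem \ref{thm:shortweq}, reduce the quartic via Algorithm \ref{alg:red}, and use Lemma \ref{lem:isos}(b) to absorb the leftover scalar $\alpha_0^4$ modulo cubes so that $v(c)\in\{0,1,2\}$. If anything, your bookkeeping of the parameters $\alpha_0,\beta_0,\gamma$ is more explicit than the paper's.
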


\begin{proof}
  Let $y^3 = f(x)$ with $f(x)\in K[x]$ be a short Weierstrass equation
  for $Y$, which exists by Theorem \ref{thm:shortweq}.  Scaling the
  coordinates if necessary, we may assume that $f = \gamma g$, where
  $\gamma \in \OO_K$ and where $g$ is monic.

  Let $f_0$ be the reduced quartic equivalent to $g$. We have
  \begin{equation}
    g (\alpha x + \beta) = \gamma \alpha^{-4} f_0 .
  \end{equation}
  Now $\delta = \gamma \alpha^{-4}$ is integral since $v (\alpha) \le 0$. Let
  $e = \lfloor v (\delta) / 3 \rfloor$.  Set $c = \delta \pi^{-3 n}$. By Lemma
  \ref{lem:isos} $(x, y) \mapsto (\alpha x + \beta, \pi^n y)$ defines an
  isomorphism between $Y$ and the curve defined by \eqref{eq:binredweq}
  satisfying the requirement of the statement.
\end{proof}

\begin{definition}\label{def:binredweq}
  We call an equation for $Y$ as in Corollary \ref{cor:binred} a \emph{reduced
  short Weierstrass equation} for $Y$.
\end{definition}

\begin{remark}\label{rem:red_min}
  Reduced short Weierstrass equations are not always minimal, as is shown by
  the example
  \begin{equation}
    Y_1 :\; y^3 = 7 (x^4 - 9 x^2 - 10 x - 9)
  \end{equation}
  for $p = 7$. This reduced equation has discriminant valuation $19$. However,
  the short Weierstrass equation
  \begin{equation}
    Y_2 :\; y^3 = 49 x^4 - 56 x^3 + 15 x^2 + 2 x - 1
  \end{equation}
  defines an isomorphic curve and has discriminant valuation $10$.
\end{remark}

\subsection{Criteria for good reduction}\label{sec:goodred2}

We can characterize good reduction of nonspecial Picard curves by their
reduced short Weierstrass model as long as the residue characteristic does not
equal $3$.

\begin{proposition}\label{prop:goodredbinary}
  Let $(K, v)$ be a complete discretely valued field with $\Char (K) \neq 3$
  whose residue field does not have characteristic $3$. Let $Y$ be a nonspecial
  Picard curve over $K$ with reduced short Weierstrass equation
  \begin{equation}\label{eq:goodredbinary}
    Y :\; y^3 = c f .
  \end{equation}
  Then $Y$ has good reduction modulo $v$ if and only if $v (c) = 0$ and $v
  (\Delta (f)) = 0$.
\end{proposition}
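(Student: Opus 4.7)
The plan is to combine Proposition~\ref{prop:goodredternary}, which detects good reduction via the minimal discriminant, with the explicit formula of Lemma~\ref{lem:disc_nonspecial} and the classification of isomorphisms in Lemma~\ref{lem:isos}.

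For the direction $(\Leftarrow)$, the idea is to exhibit the homogenization $F$ of the equation $y^3 = cf(x)$ as an integral model with unit discriminant. Lemma~\ref{lem:disc_nonspecial}, applied to the polynomial $cf$ (whose leading coefficient is $c$ and whose discriminant is $c^6\Delta(f)$), gives $\Delta(F) = -3^9 c^{15} \Delta(f)^2$. Since the residue characteristic is not $3$, the assumptions $v(c) = 0$ and $v(\Delta(f)) = 0$ yield $v(\Delta(F)) = 0$, so $F$ defines a smooth $\OO_K$-model of $Y$ and Proposition~\ref{prop:goodredternary} concludes.

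For the direction $(\Rightarrow)$, I would first extract an integral short Weierstrass equation $by^3 = g(x)$ of minimal discriminant, which exists by Theorem~\ref{thm:minshort} (this is where we use residue characteristic $\neq 3$). Writing $g = c_0 h$ with $h$ monic, Lemma~\ref{lem:disc_nonspecial} combined with integrality forces $v(b) = v(c_0) = v(\Delta(g)) = 0$, so that $y^3 = (c_0/b)\, h(x)$ is a \emph{second} reduced short Weierstrass equation for $Y$, with $v(c_0/b) = 0$ and $v(\Delta(h)) = 0$ (and $h$ reduced by Lemma~\ref{lem:reduced_quartic}, since it reduces to a separable polynomial).

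The final step is to compare this derived equation with the given $y^3 = cf(x)$ using Lemma~\ref{lem:isos}. The isomorphism between them must be of the form $(\alpha x + \beta,\, \gamma y)$, and matching coefficients yields $c = (c_0/b)\,\alpha^4/\gamma^3$ together with $f = \alpha^{-4}\, h(\alpha x + \beta)$. Hence $f$ and $h$ are equivalent reduced quartics, so by definition they have equal discriminant valuations; together with $\Delta(f) = \alpha^{-12}\Delta(h)$ this forces $v(\alpha) = 0$, whence $v(\Delta(f)) = v(\Delta(h)) = 0$. It also gives $v(c) = -3v(\gamma)$, so $v(c)$ is divisible by $3$, and the bound $0 \leq v(c) \leq 2$ built into the definition of a reduced short Weierstrass equation forces $v(c) = 0$. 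The main obstacle is this last step: showing that the scaling $\alpha$ must be a unit is what ultimately pins down $v(c) \pmod 3$.
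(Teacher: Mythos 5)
Your forward direction is exactly the paper's: homogenize, apply Lemma \ref{lem:disc_nonspecial} and Proposition \ref{prop:goodredternary}. Your converse, however, is a genuinely different and, as far as I can tell, correct argument. The paper proves the converse by passing to the stable model over a semistable-reduction field $L$, taking the quotient by the distinguished $\ZZ/3\ZZ$, and invoking the case analysis of the stable reduction from \cite[Lemma 4.1, Theorem 4.2]{Picard1} together with \cite[Proposition 4.5]{superell} to pin down $v(c)=0$. You instead stay entirely within the discriminant theory of Section \ref{sec:integred}: good reduction gives $e_v(Y)=0$ via Proposition \ref{prop:goodredternary}, Theorem \ref{thm:minshort} produces an integral short Weierstrass model realizing this, and the formula $\Delta(F)=-3^9b^{12}c_0^3\Delta(g)^2$ (with $v(3)=0$) forces $v(b)=v(c_0)=v(\Delta(h))=0$; then Lemma \ref{lem:isos} plus the minimality clause in Definition \ref{def:binred} (both $f$ and $h$ are reduced and mutually equivalent, so their discriminant valuations agree, whence $v(\alpha)=0$) yields $v(\Delta(f))=0$ and $v(c)\equiv 0\pmod 3$, which the normalization $0\le v(c)\le 2$ upgrades to $v(c)=0$. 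Each step checks out, including the small verification that $h$ is reduced (separable reduction rules out both $\lambda(h)>0$ and the fourth-power condition). What your route buys is self-containedness and a transparent explanation of where residue characteristic $3$ enters (through Theorem \ref{thm:minshort}); what the paper's route buys is that it does not use the plane-quartic/complete-intersection structure at all, so it generalizes to superelliptic curves of higher exponent, which is the stated reason for developing the binary-form viewpoint in Section \ref{sec:goodred}.
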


\begin{proof}
  For the proof we may replace $K$ by an unramified extension and therefore we
  may assume that $K$ contains a primitive third root of unity $\zeta_3$.
  Assume that the hypotheses on $c$ and $f$ hold. Let $\Y \subset
  \PP^2_{\OO_K}$ be the plane quartic over $\OO_K$ obtained by homogenizing
  \eqref{eq:goodredbinary}. Using Lemma \ref{lem:disc_nonspecial} and
  Proposition \ref{prop:goodredternary}, we see that $\Y$ is smooth over
  $\OO_K$, so that $Y$ has good reduction.

  For the converse, let $y^3 = c f(x)$ be a reduced short Weierstrass equation
  for $Y$. Let $G$ be the distinguished group of automorphisms of $Y$ generated
  by the automorphism $\sigma:y\mapsto \zeta_3 y$. Let $L$ be a minimal Galois
  extension of $K$ over which $Y$ has semistable reduction. Then by
  \cite[Section 4.1]{Picard1}, the curve $Y_L$ has a unique stable
  $\OO_L$-model $\Y$. Moreover, the action of $G$ extends to $\Y$ and the
  quotient scheme $\X = \Y / G$ is a semistable model of $Y_L / G = \PP^1_L$.
  If $\hat{\D} \subset \X$ denotes the closure of the branch divisor $\hat{D} =
  (f)_0 \cup \{ \infty \}$, then $(\X, \hat{\D})$ is the stably marked model of
  $(\PP^1_K, \hat{D})$. Let us denote by $\Yb$ and $\Xb$ the special fibers of
  $\Y$ and $\X$. The induced map $\Yb \to \Xb$ is called the \emph{stable
  reduction} of the cover $Y \to \PP^1_K$. Lemma 4.1 and Theorem 4.2 of
  \cite{Picard1} give a precise description of this map, distinguishing 5 cases
  (a)-(e). Case (a) is the only case where $\Yb$ is smooth. In this case, $\Xb$
  is smooth as well.

  Assume that $Y$ has good reduction over $K$. Then $Y$ has semistable
  reduction over $L = K$, and its stable model $\Y$ is smooth over $\OO_K$.
  Therefore, we are in Case (a) of Lemma 4.1 and Theorem 4.2 from
  \cite{Picard1}. It follows that $\X = \Y/G$ is a smooth model of $\PP^1_K$
  such that the closure $\hat{\D}$ in $\X$ of the branch divisor $\hat{D}$ is
  \'etale over $\Spec\OO_K$. By Proposition \ref{prop:goodredbq} this implies
  that the discriminant of $f$ is a unit in $\OO_K$, \ie, that the reduction
  $\bar{f}\in k[x]$ of $f$ is separable. It also follows from the proof of
  \cite[Theorem 4.1]{Picard1} that $v_K (c) = 0$; a more detailed argument in a
  general setting is given in the proof of \cite[Proposition 4.5]{superell}.
\end{proof}

\subsection{Invariants and twists}\label{sec:invariants}

Let $K$ be a field with $\Char (K) \neq 3$, and let $\Kbar$ be an algebraic
closure of $K$. In general, the isomorphism classes of Picard curves over
$\Kbar$ can be classified by using the Dixmier--Ohno invariants \cite{LLR}. The
definition of these invariants uses classical invariants of ternary quartics
and is relatively complicated. If we suppose additionally that $\Char (K) \neq
2$, as we do throughout this section, then we can describe the isomorphism
classes of nonspecial Picard curves over $\Kbar$ and $K$ by using a smaller and
more elementary set of invariants than the full set of Dixmier--Ohno
invariants.

\begin{proposition}\label{prop:invs}
  Let $Y$ be a nonspecial Picard curve over $K$. Then $Y$ admits an equation
  \begin{equation}\label{eq:invs}
    Y :\; y^3 = x^4 + c_2 x^2 + c_3 x + c_4 .
  \end{equation}
  The isomorphism class of $Y$ over $K$ is determined by the point $(c_2 : c_3
  : c_4)$ in the weighted projective space $\PP (6 : 9 : 12) (K)$.
\end{proposition}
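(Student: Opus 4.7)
The plan is to establish existence of the normal form \eqref{eq:invs} by a linear substitution in $x$, and then to classify such forms up to $K$-isomorphism using Lemma \ref{lem:isos}. For existence, I would invoke Theorem \ref{thm:shortweq} to obtain an equation $y^3 = x^4 + c_1 x^3 + c_2 x^2 + c_3 x + c_4$ over $K$. Since $\Char(K) \neq 2$, the Tschirnhaus substitution $x \mapsto x - c_1/4$ eliminates the cubic term and produces an equation of the shape \eqref{eq:invs}.

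For the classification, let $Y_i : y^3 = f_i(x) = x^4 + c_{2,i} x^2 + c_{3,i} x + c_{4,i}$ for $i = 1, 2$ be two such equations. By Lemma \ref{lem:isos}(b), the curves $Y_1$ and $Y_2$ are $K$-isomorphic if and only if there exist $\alpha, \beta, \gamma \in K$ with $\alpha, \gamma \neq 0$ satisfying $f_2(\alpha x + \beta) = \gamma^3 f_1(x)$. Expanding and comparing coefficients in descending degree yields $\alpha^4 = \gamma^3$; then $4 \alpha^3 \beta = 0$, forcing $\beta = 0$ since $\Char(K) \neq 2$; and finally $c_{2,2} = \alpha^2 c_{2,1}$, $c_{3,2} = \alpha^3 c_{3,1}$, $c_{4,2} = \alpha^4 c_{4,1}$.

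The decisive observation is that every pair $(\alpha, \gamma) \in (K^\times)^2$ with $\alpha^4 = \gamma^3$ has the form $(t^3, t^4)$ for a unique $t \in K^\times$: setting $t := \gamma/\alpha$, one verifies $t^3 = \gamma^3/\alpha^3 = \alpha$ and $t^4 = \gamma^4/\alpha^4 = \gamma$. The admissible transformations over $K$ are therefore parameterized freely by $t \in K^\times$, and their action on the invariant tuple reads
\begin{equation*}
(c_{2,2}, c_{3,2}, c_{4,2}) = (t^6 c_{2,1}, t^9 c_{3,1}, t^{12} c_{4,1}),
\end{equation*}
which is exactly the equivalence relation defining $\PP(6:9:12)(K)$. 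Conversely, any weighted scaling by $t \in K^\times$ is realized by the isomorphism $(x, y) \mapsto (t^3 x, t^4 y)$, so the class of $(c_2 : c_3 : c_4)$ in $\PP(6:9:12)(K)$ is a complete $K$-isomorphism invariant of $Y$.

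The only real subtlety — and thus the main obstacle I would anticipate — is this reconciliation of weights: the naive action by $\alpha$ alone has weights $(2, 3, 4)$, but not every $\alpha \in K^\times$ is admissible, since one needs $\alpha^4$ to be a cube in $K^\times$. Over $\Kbar$ this constraint is invisible, but over $K$ it cuts the parameter group down to the image of $t \mapsto (t^3, t^4)$. Absorbing the constraint via the single parameter $t$ upgrades the weights to $(6, 9, 12)$ and produces the correct $K$-rational moduli space; without this step one would be tempted to write $\PP(2:3:4)(K)$, which coincides with $\PP(6:9:12)(K)$ only when every element of $K^\times$ is a cube.
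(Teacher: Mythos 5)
Your proposal is correct and follows essentially the same route as the paper: existence via Theorem \ref{thm:shortweq} plus a Tschirnhausen substitution, then Lemma \ref{lem:isos} to reduce isomorphisms to pairs $(\alpha,\gamma)$ with $\alpha^4=\gamma^3$, resolved by the substitution $t=\gamma/\alpha$ (the paper's $\nu=\mu/\lambda$) to obtain the weights $(6,9,12)$. Your explicit derivation of $\beta=0$ from the vanishing $x^3$-coefficient is a small elaboration the paper leaves implicit, but the argument is the same.
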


\begin{proof}
  By Theorem \ref{thm:shortweq}, $Y$ admits a short Weierstrass equation
  \eqref{eq:shortweq} over $K$, from which we can obtain \eqref{eq:invs} via a
  Tschirnhausen transformation.

  Lemma \ref{lem:isos} implies that the only possible isomorphisms between two
  curves $Y$ and $Y'$  of the form \eqref{eq:invs} are of the form $(x, y)
  \mapsto (\lambda x, \mu y)$, with $\lambda^4 = \mu^3$. Writing out the
  conditions for this to be an isomorphism, we end up with
  \begin{equation}
    (c'_2, c'_3, c'_4)
    = (\lambda^2 \mu^{-3} c_2, \lambda \mu^{-3} c_3,\lambda \mu^{-3} c_4)
    = (\lambda^{-2} c_2, \lambda^{-3} c_3, \lambda^{-4} c_4) .
  \end{equation}
  Since we need $\lambda^4 = \mu^3$, we see that $\lambda = (\mu / \lambda)^3$
  has to be a third power. If we write $\nu = \mu / \lambda$, then we have
  \begin{equation}
    (c'_2, c'_3, c'_4)
    = (\nu^{-6} c_2, \nu^{-9} c_3, \nu^{-12} c_4)
  \end{equation}
  Conversely, if such a $\nu$ exists, then we can take $\lambda = \nu^3$, $\mu
  = \nu^4$.
\end{proof}

\begin{remark}
  Proposition \ref{prop:invs} implies that we can represent isomorphism classes
  of nonspecial Picard curves over both $\Kbar$ and $K$ by the unique weighted
  representatives of points in weighted projective space that were defined in
  \cite[Section 1]{LR11}. We have used these representatives for the nonspecial
  curves in our database described in Appendix \ref{sec:database}, as they give
  us an effective criterion for the isomorphism of a given curve with one in
  the database.
\end{remark}

\begin{remark}
  Let $(K, v)$ be a discretely valued field whose residue field $k$ has
  characteristic not equal to $2$ or $3$. We can then apply the Tschirnhausen
  transformation to both the curve $Y$ and its reduction modulo $v$.
  Proposition \ref{prop:goodredbinary} then yields the following simple
  criterion for good reduction. Let $\Delta$ be the discriminant of the
  polynomial $x^4 + c_2 x^2 + c_3 x + c_4$ in \eqref{eq:invs}. Then $Y$ has
  good reduction (resp.\ potentially good reduction)  if and only if the point
  $(c_2, c_3, c_4, \Delta)$ admits an integral representative in $\PP (6 : 9 :
  12 : 36) (K)$ (resp.\ in $\PP (6 : 9 : 12 : 36) (\Kbar)$) that reduces to a
  point whose final coordinate is nonzero. A complete criterion that includes
  the cases of residue characteristic $2$ and $3$ is given in \cite{LLR}.
\end{remark}

\begin{remark}
  Proposition \ref{prop:invs} allows us to give a complete description of the
  twists of nonspecial Picard curves, which were already classified in work by
  Lorenzo García \cite{Lorenzo}. They are as follows.

  Again let $K$ be a field of characteristic $\neq 2, 3$, and let $Y:\; y^3 =
  x^4 + c_2 x^2 + c_3 x + c_4$ be a nonspecial Picard curve over $K$.
  Generically, $\Aut_{\Kbar} (Y_{\Kbar}) \simeq \ZZ/3\ZZ$, in which case the
  isomorphism classes of twists of $Y$ correspond bijectively to the elements
  of the quotient $K^\times / (K^\times)^3$ by assigning to $\lambda \in
  K^\times$ the twist
  \begin{equation}
    Y_{\lambda} :\;
    y^3 = x^4 + \lambda^2 c_2 x^2 + \lambda^3 c_3 x + \lambda^4 c_4 .
  \end{equation}

  We have $\Aut_{\Kbar} (Y_{\Kbar}) \simeq \ZZ/6\ZZ$ if and only if $c_3 = 0$
  and $c_2 \neq 0$. In this case the twists of $Y$ correspond bijectively to
  $K^\times / (K^\times)^6$ via
  \begin{equation}
    Y_{\lambda} :\; y^3 = x^4 + \lambda c_2 x^2 + \lambda^2 c_4 .
  \end{equation}

  Finally, we have $\Aut_{\Kbar} (Y_{\Kbar}) \simeq \ZZ/9\ZZ$ if and only if
  $c_2 = c_4 = 0$. In this case the twists of $Y$ correspond bijectively to
  $K^\times / (K^\times)^9$ via
  \begin{equation}
    Y_{\lambda} :\; y^3 = x^4 + \lambda c_3 x .
  \end{equation}

  Given an integral polynomial $f$ defining a nonspecial curve $Y$ over a
  number field $K$, along with a finite set $S$ of primes of $K$, we can
  quickly determine the twists of $Y$ with good reduction outside $S$, since
  for this we need only consider classes represented by $\lambda$ having
  trivial valuation outside the primes in $S$ and the primes dividing the
  discriminant of $f$. Indeed, considering the twists above, an invocation of
  Proposition \ref{prop:goodredbinary} shows that nontrivially twisting a
  Picard curve at a prime where it has good reduction yields curves that no
  longer have good reduction at this prime.

  Twists of the standard special Picard curve and their reduction properties
  will be discussed in Section \ref{sec:classification}. Special Picard curves
  form a single $\Kbar$-isomorphism class, which corresponds to the point $(0 :
  0 : 1)$ in Proposition \ref{prop:invs}.
\end{remark}

\section{Special Picard curves}\label{sec:special}

In this section we turn our attention to special Picard curves (Definition
\ref{def:special}) and extend most of our previous results for nonspecial
curves to them. Moreover, we give a complete list of all special Picard curves
over $\QQ$ with good reduction outside the primes $p=2,3$.

The situation is more complex than for nonspecial Picard curves, due to the
larger automorphism group. Special Picard curves can always be written as
superelliptic curves of exponent $4$, \ie, they admit an equation of the form
\begin{equation}
    x^4 = g(y),
\end{equation}
where $g$ is a quartic polynomial which is $\PGL_2$-equivalent, over the
algebraic closure of $K$, to the cubic polynomial $y^3+1$. Though it is easy to
write down a versal family of such polynomials, writing down the finite list of
all twists with good reduction outside a finite set of primes turns out to be a
surprisingly subtle problem.

Throughout, $K$ denotes a field of characteristic $\neq 2,3$. We choose an
algebraic closure $\Kbar/K$ and set $\Gamma=\Gal(\Kbar/K)$. We also choose a
primitive $12$-th root of unity $\zeta_{12}\in \Kbar$ and set $\zeta_4 :=
\zeta_{12}^3$ and $\zeta_3 := \zeta_{12}^4$.

\subsection{The automorphism group of $Y_0$} \label{sec:special_G}

Recall from Lemma \ref{lem:special} that there is a unique special Picard curve
$Y_0$ over $\Kbar$. We write it as a superelliptic curve of exponent $4$:
\begin{equation} \label{eq:Y_0}
   Y_0 :\; x^4 = y^3 + 1.
\end{equation}
We let $G:=\Aut_{\Kbar}(Y_0)$ denote its automorphism group. It contains the
three elements $\sigma,\tau,\rho\in G$ defined as follows:
\begin{equation} \label{eq:sigma_tau_rho}
  \sigma (x, y) = (x, \zeta_3 y), \qquad
  \tau (x, y) = (\zeta_4 x, y), \qquad
  \rho (x, y) = \left( \frac{\sqrt{3} x}{y + 1},
       \frac{-y + 2}{y + 1}\right).
\end{equation}
Here $\sqrt{3}$ is chosen such that $\zeta_4\sqrt{3}=2\zeta_3+1$.

Let
\begin{equation}
   \psi_0 : Y_0\to\PP^1_{\Kbar}
\end{equation}
denote the morphism defined by the element $y$ of the function field of $Y_0$.
Then $\psi_0$ is a cyclic Galois cover of degree $4$, with Galois group
generated by $\tau$. The branch locus is the divisor
\begin{equation}\label{eq:D0}
    D_0 = (y^3+1)_0\cup\{\infty\}\subset\PP^1_K, \quad
    \text{so}\quad D_0(\Kbar)= \{\infty, -1, -\zeta_3, -\zeta_3^2 \},
\end{equation}
and $\psi_0$ is totally branched over each of these four points. Note that the
four monodromy generators are all equal (\ie, $\psi_0$ has type $(1,1,1,1)$, in
the notation of Remark \ref{rem:distpoint}).

\begin{lemma}\label{lem:special_aut}
  \begin{enumerate}[(a)]
  \item
    The group $G$ has order $48$ and is generated by the three elements
    $\sigma, \tau, \rho$ defined in \eqref{eq:sigma_tau_rho}. (It has label
    $[48; 33]$ in \texttt{GAP}'s database of small groups.)
  \item
    The center $Z(G)$ of $G$ is cyclic of order $4$, generated by $\tau$. The
    quotient $\overline{G} := G / Z(G)$ can be identified, via the map
    $\psi_0$, with the subgroup of $\Aut_{\Kbar}(\PP^1_{\Kbar}) =
    \PGL_2(\Kbar)$ that fixes the branch divisor $D_0$. After labeling the
    points of $D_0$, the induced permutation representation of $\overline{G}$
    on $D_0$ induces an isomorphism $\overline{G}\cong A_4$.
  \item
    There are precisely four subgroups of $G$ of order $3$. They are all
    conjugate to the subgroup $\gen{\sigma}$, and each of them fixes exactly
    one of the $4$ fixed points of $\tau$.
  \end{enumerate}
\end{lemma}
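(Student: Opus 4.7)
The plan is to analyze $G = \Aut_{\Kbar}(Y_0)$ via the cyclic degree-$4$ cover $\psi_0 : Y_0 \to \PP^1_{\Kbar}$. For part (a), I first verify directly that $\sigma$, $\tau$, and $\rho$ preserve the equation $x^4 = y^3 + 1$; the only nontrivial check is for $\rho$ and reduces to the polynomial identity $(2-y)^3 + (y+1)^3 = 9(y^2-y+1)$. By Lemma \ref{lem:special} combined with the classification of automorphism groups of smooth plane quartics \cite[Theorem 3.1]{param}, we have $|G| = 48$, so it suffices to show that $\langle \sigma, \tau, \rho \rangle$ already has order $48$; this will drop out of part (b). The GAP identification $[48;33]$ is then a mechanical lookup.

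For part (b), the commutations $\sigma\tau = \tau\sigma$ and $\rho\tau = \tau\rho$ are immediate from the formulas \eqref{eq:sigma_tau_rho}, giving $\langle \tau \rangle \subseteq Z(G)$. Since $\tau$ generates the Galois group of $\psi_0$, the kernel of the induced homomorphism $G \to \PGL_2(\Kbar)$ is exactly $\langle \tau \rangle$, with image in $\Aut(\PP^1, D_0) \hookrightarrow \Sym(D_0) \cong S_4$. A direct computation on $D_0 = \{ \infty, -1, -\zeta_3, -\zeta_3^2 \}$ identifies $\overline{\sigma}$ as the $3$-cycle fixing $\infty$ and $\overline{\rho}$ as the double transposition $(\infty, -1)(-\zeta_3, -\zeta_3^2)$; a $3$-cycle together with a double transposition moving its fixed point generate $A_4$, so $\overline{G} \supseteq A_4$. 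Combined with $|\langle \tau \rangle| = 4$ this forces $|\langle \sigma, \tau, \rho \rangle| \ge 48$, hence equality with $G$ and $\overline{G} = A_4$. Since $A_4$ is centerless, $Z(G) = \langle \tau \rangle$.

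For part (c), I use the central extension $1 \to \langle \tau \rangle \to G \to A_4 \to 1$ with coprime kernel and quotient orders. Any order-$3$ subgroup of $G$ meets $\langle \tau \rangle$ trivially and so maps isomorphically onto a Sylow $3$-subgroup of $A_4$; conversely, the preimage of such a Sylow subgroup is a central extension of $\ZZ/3\ZZ$ by $\ZZ/4\ZZ$, which (by Schur--Zassenhaus, or because $\Aut(\ZZ/4\ZZ)$ has no element of order $3$) must be cyclic of order $12$ and hence contains a unique subgroup of order $3$. This produces the four subgroups of order $3$ in $G$ claimed, all conjugate by Sylow's theorem. The fixed locus of $\tau$ on $Y_0$ consists of the three points $(0, \zeta)$ with $\zeta^3 = -1$ together with the point $P_\infty$ above $y = \infty$, and $\sigma$ visibly fixes $P_\infty$; for any $\alpha \in G$, the conjugate $\alpha \langle \sigma \rangle \alpha^{-1}$ then fixes $\alpha(P_\infty)$, again a fixed point of $\tau$ since this locus is $G$-stable. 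Since $\overline{G} \cong A_4$ acts transitively on $D_0$ and distinct conjugates of $\langle \sigma \rangle$ must fix distinct points (else two distinct Sylow $3$-subgroups would lie in a common stabilizer of order $12$), the four conjugate subgroups fix the four fixed points of $\tau$ one apiece. The main obstacle lies in part (b): identifying $\overline{\sigma}$ and $\overline{\rho}$ as the specific cycles above requires a somewhat careful computation in $\PGL_2(\Kbar)$ with the roots of unity involved.
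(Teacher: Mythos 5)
Your proof is correct, but it takes a genuinely different route from the paper's, whose entire argument reads: statements (a) and (b) are consequences of \cite[Theorem 6.1]{param}, and statement (c) can be shown by a calculation. In other words, the paper outsources the structure of $G$ to the published classification of automorphism groups of smooth plane quartics and leaves (c) to the reader, whereas you reconstruct everything from the explicit generators and the degree-$4$ fibration $\psi_0$. The only external input you use is the order $|G|=48$ (via Lemma \ref{lem:special} and \cite[Theorem 3.1]{param}); from there your counting argument --- a kernel $\gen{\tau}$ of order $4$ against an image in $\PGL_2(\Kbar)$ containing the copy of $A_4$ generated by $\overline{\sigma}=(2\,3\,4)$ and $\overline{\rho}=(1\,2)(3\,4)$ --- pins down $\gen{\sigma,\tau,\rho}=G$, $\overline{G}\cong A_4$, and $Z(G)=\gen{\tau}$ simultaneously, and your Schur--Zassenhaus argument for (c) is a clean substitute for the paper's unspecified ``calculation''. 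One presentational point: when you invoke ``the induced homomorphism $G\to\PGL_2(\Kbar)$'' you have not yet shown that every element of $G$ descends along $\psi_0$ (equivalently, that $\gen{\tau}$ is normal in $G$); but your argument only ever needs the descent homomorphism on the subgroup $\gen{\sigma,\tau,\rho}$, where it is manifestly defined since each generator permutes the fibers of $\psi_0$, and normality of $\gen{\tau}$ in $G$ then follows a posteriori from the equality $\gen{\sigma,\tau,\rho}=G$. What your approach buys is a self-contained verification, including the identity $(2-y)^3+(y+1)^3=9(y^2-y+1)$ and the explicit permutations on $D_0$ that the paper only records later in Remark \ref{rem:A4tilde}; what the paper's approach buys is brevity.
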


\begin{proof}
  Statements (a) and (b) are consequences of \cite[Theorem 6.1]{param}.
  Statement (c) can be shown by a calculation.
\end{proof}

The group $G$ has a natural linear action on $V:=H^0(Y_0, \Omega_{Y_0}^1)$. To
describe this action, we choose a basis
\begin{equation}
  \omega_1 = \frac{x\,{\rm d}x}{y^2}, \quad
  \omega_2 = \frac{{\rm d}x}{y}, \quad
  \omega_3 = \frac{{\rm d}x}{y^2}
\end{equation}
of $V$ as $\Kbar$-vector space.  This representation decomposes into a direct
of sum of two irreducible subrepresentations, as follows:
\begin{equation}\label{eq:decomp}
  V=V_1\oplus V_2, \quad \text{ where }\quad
  V_1=\langle \omega_1\rangle_{\Kbar} \quad \text{ and }\quad
  V_2=\langle \omega_2, \omega_3\rangle_{\Kbar}.
\end{equation}
Indeed, the center $Z(G)=\gen{\tau}$ acts via a character of order $2$ on $V_1$
and via a character of order $4$ on $V_2.$ Note that $G$ acts faithfully on
$V_2$.

The canonical embedding of $Y_0$ is described by $Y_0\hookrightarrow
\PP(V^\ast),$ where $V^\ast$ is the vector space dual to $V$. The composition
\begin{equation}\label{eq:canonical}
  Y_0\hookrightarrow \PP(V^\ast) \dasharrow \PP(V_2^\ast),
\end{equation}
with the natural projection on $\PP(V_2^\ast)$ can be identified with the map
$\psi_0:Y_0\to \PP^1_{\Kbar}$ corresponding to $y$. Indeed, we have
$y=\omega_2/\omega_3$. In the following, we will often identify $G$ with its
image under the embedding
\begin{equation}\label{eq:Gmatrix}
  G\hookrightarrow \GL(V_2^\ast)\simeq \GL_2(\Kbar)
\end{equation}
obtained by the representation $V_2^\ast$. Then we obtain an embedding of two
short exact sequences:
\begin{equation} \label{eq:two_ses}
  \xymatrix{
     1 \ar[r] & Z(G) \ar[r] \ar@{^{(}->}[d] & G \ar[r] \ar@{^{(}->}[d]
                         & \overline{G} \ar[r] \ar@{^{(}->}[d] &  1 \\
     1 \ar[r] & \Kbar^\times \ar[r] & \GL_2(\Kbar) \ar[r]  &
                                 \PGL_2(\Kbar) \ar[r] &  1
  }
\end{equation}
Here the embedding $\iota : \overline{G}\hookrightarrow \PGL_2(\Kbar)$ comes
from the action on the quotient curve $Y_0/Z(G)\cong\PP^1_{\Kbar}$, see Lemma
\ref{lem:special_aut}.(b).

\begin{remark}\label{rem:A4tilde}
  For $g\in G$ one may compute the characteristic polynomial of its image
  $\iota (g)\in \GL_2(\Kbar)$, either by a direct computation, or by using the
  character table of $G$. One finds that $\det(\iota(g))\in \{\pm 1\}$. One
  checks that the index-$2$ subgroup $G_1:=G\cap \SL_2(\Kbar)$ is isomorphic to
  $\tilde{A}_4$. Here $\tilde{A}_4$ is the unique nontrivial central extension
  of $A_4$ by $\{\pm 1\}$.

  The image $\overline{\rho}$ of the element $\rho$ in $\overline{G}\simeq A_4$
  corresponds to a permutation with cycle type $(2, 2)$. It interchanges $-1$
  with $\infty$ and $-\zeta_3$ with $-\zeta_3^2$. The $4$ lifts $\tau^i\rho$ of
  $\overline{\rho}$ to $G$ have order $2$ if $i\equiv 0\pmod{2}$ and $4$ if
  $i\equiv 1\pmod{2}$. The lifts of order $4$ are in $G_1$, and the lifts of
  order $2$ are not.

%
\end{remark}

\subsection{Descent for special Picard curves} \label{sec:special_descent}

We let $Y_{0,K}$ denote the $K$-model of $Y_0$ given by the equation
\eqref{eq:Y_0}. This is called the \emph{standard model} of $Y_0$. There is a
natural identification $Y_0=Y_{0,K}\otimes_K\Kbar$, which induces a semilinear
action of $\Gamma$ on $Y_0$. This action may be regarded as a section
$s_0:\Gamma\to \Aut_K(Y_0)$ of the short exact sequence
\begin{equation}\label{eq:descent1}
  1 \to G \to \Aut_K(Y_{0}) \to \Gamma \to 1.
\end{equation}
The section $s_0$ defines a continuous action of $\Gamma$ on $G$, by
conjugation. We call it the \emph{standard action}, and we will henceforth
regard the group $G$ as a \emph{$\Gamma$-group} in the sense of \cite[\S
5.1]{SerreCG}, using the standard action.

Recall from Lemma \ref{lem:special} that any special Picard curve $Y$ over $K$
admits an isomorphism
\begin{equation}
   Y \otimes_K \Kbar \cong Y_0.
\end{equation}
The $\Gamma$-action on $Y_0$ induced by such an isomorphism corresponds to
another section $s:\Gamma\to\Aut_K(Y_0)$ of \eqref{eq:descent1}.  The
`difference' between $s$ and $s_0$ defines a $1$-cocycle
\begin{equation}\label{eq:cocycle}
  (A_\gamma)_{\gamma\in \Gamma}\in Z^1(\Gamma, G),\qquad \text{ where }
   A_\gamma=s(\gamma)\circ s_0(\gamma)^{-1}.
\end{equation}
Its class in $H^1(\Gamma, G)$ only depends on the $K$-model $Y$ of $Y_0$. This
correspondence yields a bijection between the set of isomorphism classes of
special Picard curves over $K$ and the Galois cohomology set $H^1(\Gamma, G)$,
see, \eg,~\cite[Chapter V, \S 4]{SerreAGCF}.

As a first application of descent theory we  prove that every special Picard
curve can be written as a superelliptic curve of exponent $4$, with some kind
of normal form. A slightly different approach to this question can be found in
\cite[Section 5]{Lorenzo}.

\begin{theorem}\label{thm:special_sweq}
  Let $Y$ be a special Picard curve over a field $K$ of characteristic $\neq
  2,3$.
  \begin{enumerate}[(a)]
    \item The curve $Y$ admits a defining equation
      \begin{equation}\label{eq:special2}
        Y :\; x^4 = a g (y), \qquad  g(y)=y^4 + b y^2 + c y + d\in K[y]
      \end{equation}
      with $a\in K^\times$ and  $\Delta (g)\neq 0$.
    \item An equation \eqref{eq:special2} as in (a) defines a special Picard
      curve if and only if
      \begin{equation}
        b^2+12d=0.
      \end{equation}
  \end{enumerate}
\end{theorem}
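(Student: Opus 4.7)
The plan is to establish (a) by Galois descent and Kummer theory, and (b) by the classical invariant theory of binary quartics.

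For (a), the key observation is that $\tau^2 \in Z(G)$ has order $2$ and is therefore fixed by the standard Galois action on $G$ (which sends $\tau \mapsto \tau^{\pm 1}$). Since inner automorphisms fix $Z(G)$ pointwise, every cocycle-twist of this action also fixes $\tau^2$, and so the involution $\tau^2$ descends to a $K$-rational automorphism of any special Picard curve $Y$. Using this involution to decompose $V := H^0(Y, \Omega_Y^1)$ into $\pm 1$-eigenspaces, one obtains a $K$-rational direct sum $V = V_1^K \oplus V_2^K$ refining \eqref{eq:decomp}, with $V_2^K$ of dimension $2$, so that $\PP(V_2^{K,\ast}) \cong \PP^1_K$ as a $K$-scheme. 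Projecting the canonical embedding $Y \hookrightarrow \PP(V^\ast)$ away from the point $\PP(V_1^{K,\ast})$ then yields a $K$-morphism $\psi : Y \to \PP^1_K$ of degree $4$ that, over $\Kbar$, coincides with the quotient by $\langle \tau \rangle$. Since the standard Galois action on $\langle \tau \rangle$ is by inversion, the Galois group of $\psi$ is the $K$-group $\mu_4$.

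Kummer theory for $\mu_4$-torsors then lets us write $Y$ in the form $x^4 = f(y)$ with $f \in K(y)^\times$ well-defined modulo $(K(y)^\times)^4$. Since $\PP^1_K$ has infinitely many $K$-rational points outside the finite branch locus of $\psi$, a $K$-rational Möbius transformation applied to the coordinate $y$ ensures that $\infty$ is not a branch point, and we can then arrange $f = a g(y)$ with $a \in K^\times$ and $g \in K[y]$ monic of degree $4$; the condition $\Delta(g) \neq 0$ reflects the reducedness of the branch divisor. A Tschirnhaus substitution $y \mapsto y - c_1/4$ eliminates the $y^3$-coefficient and produces the form \eqref{eq:special2}. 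The main obstacle is precisely this passage from the abstract $\mu_4$-torsor to the explicit polynomial form: while the Kummer-type equation $x^4 = f$ is formal, obtaining $f$ as a polynomial requires the coordinate choice above together with a careful handling of the $K$-rational line bundle underlying the torsor.

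For (b), I compute the $I$-invariant of the homogenization $G(y, z) = y^4 + b y^2 z^2 + c y z^3 + d z^4$ of $g$. Writing a binary quartic as $\sum_{i=0}^{4} \binom{4}{i} A_i y^{4-i} z^i$ and using the classical formula $I = A_0 A_4 - 4 A_1 A_3 + 3 A_2^2$, a direct computation gives $I(G) = d + b^2/12$, so $I(G) = 0$ is equivalent to $b^2 + 12 d = 0$. The homogenization $y^3 z + z^4$ of $y^3 + 1$ has $I = 0$ and nonzero discriminant. By Lemma \ref{lem:special}, the curve $x^4 = a g(y)$ is a special Picard curve if and only if it is $\Kbar$-isomorphic to $Y_0$; as isomorphisms between superelliptic curves of exponent $4$ act on $g$ by the combined action of $\PGL_2(\Kbar)$ on the homogenization together with a scaling of $x$, this occurs precisely when $G$ and $y^3 z + z^4$ lie in a common $\PGL_2(\Kbar)$-orbit. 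Since $\Delta(g) \neq 0$ forces $G$ to be non-degenerate, such orbits are characterized among non-degenerate binary quartics by the single invariant-theoretic equation $I = 0$, yielding the claimed equivalence.
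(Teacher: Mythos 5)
Your proof is correct and follows essentially the same route as the paper's: descend the decomposition $V=V_1\oplus V_2$ of \eqref{eq:decomp} to $K$, obtain the $K$-rational degree-$4$ map $\psi$, write $Y$ as $x^4=f(y)$, and characterize specialness by an invariant of the branch divisor. Your two variations are cosmetic but pleasant: using the $K$-rationality of the central involution $\tau^2$ (fixed by every twisted Galois action) to split $V$ into eigenspaces replaces the paper's explicit trivialization $A_\gamma=C^{-1}\gamma(C)$ via Hilbert 90, and computing the binary quartic invariant $I$ replaces the paper's equivalent cross-ratio computation $j(D)=j(D_0)=0$; note, though, that the paper's explicit matrix $C$ is reused later (in the proof of part (b) and in Lemma \ref{lem:specialp=3}), which is one reason for its choice. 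One step of yours is under-justified: the passage from $x^4=f$ with $f$ well-defined in $K(y)^\times/(K(y)^\times)^4$ to $f=ag(y)$ with $g$ separable of degree $4$ does \emph{not} follow from the reducedness of the branch divisor alone. The class of $\mathrm{div}(f)$ modulo $4$ records the local monodromy exponents, and a priori a totally ramified $\mu_4$-cover branched at four points could have type $(1,1,3,3)$, forcing $f$ to be a product of linear factors with unequal exponents and hence not of the form $a g$ with $g$ separable of degree $4$. You must invoke that $\psi$ is a twist of $\psi_0$ and therefore has all four local monodromy generators equal (type $(1,1,1,1)$), after which all exponents may be taken equal to $1$ upon replacing $x$ by $1/x$ if necessary; this is exactly the one-line argument the paper supplies at this point.
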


\begin{proof}
  Let $Y$ be a special Picard curve over $K$. Let $s:\Gamma\to \Aut_K(Y_0)$ be
  the section corresponding to $Y$ and the choice of an isomorphism
  $Y\otimes_K\Kbar \cong Y_0$. Let $(A_\gamma)_\gamma\in Z^1(\Gamma, G)$ be the
  corresponding cocycle as in \eqref{eq:cocycle}. By abuse of notation, we also
  write $(A_\gamma)_{\gamma}$ for the corresponding cocycle in
  $Z^1(\Gamma,\GL_2(\Kbar))$ obtained via the embedding $G \hookrightarrow
  \GL_2(\Kbar)$. The latter describes the twisted $\Gamma$-action on the
  $G$-subrepresentation $V_1(s)\subset V(s) := H^0(Y_{\Kbar},
  \Omega_{Y_{\Kbar}})$, if we identify the underlying $\Kbar$-vector space with
  the subrepresentation $V_1\subset V=H^0(Y_0,\Omega_{Y_0})$, via the chosen
  isomorphism $Y\otimes_K\Kbar \cong Y_0$ (see \eqref{eq:decomp}).  By Hilbert
  90 we have that $H^1(G, \GL_2(\Kbar))=1$, see, \eg,~\cite[Prop.~3 in \S
  X.1]{SerreCL}. It follows that
  \begin{equation} \label{eq:resolve_cocycle}
    A_\gamma=C^{-1}\gamma(C) \qquad \text{ for some }C\in \GL_2(\Kbar).
  \end{equation}
  Then \[ (\omega_2,\omega_3)\cdot C = (\eta_2,\eta_3) \] defines a basis
  $(\eta_2,\eta_3)$ of $V_2(s)$, which is $\Gamma$-invariant. This means that
  we may regard $\eta_2,\eta_3$ as differentials on $Y$. The quotient
  $y:=\eta_2/\eta_3$ is a rational function on $Y$ and hence induces a map
  \begin{equation}
    \psi:Y\to\PP^1_K.
  \end{equation}
  By construction, this is a $K$-model of the map $\psi_0:Y_0\to \PP^1_{\Kbar}$
  defined in \S \ref{sec:special_G}.

  By the same argument, we can also choose a $K$-basis $\eta_1$ of the
  $1$-dimensional subrepresentation $V_1(s)\subset V(s)$. Then
  $x:=\eta_1/\eta_3$ is a rational function on $Y$ such that $\tau^*x=\zeta_4
  x$, where we regard $\tau$ as an automorphism of $Y\times_K\Kbar$. Because
  $y$ is fixed under the action of $\tau$, it follows that the curve $Y$ admits
  an affine equation
  \begin{equation}
    x^4 = f(y),
  \end{equation}
  where $f$ is a rational function in $y$. Since the quotient map given by $(x,
  y) \to y$ has identical monodromy generator everywhere by the remark after
  \eqref{eq:D0}, we may assume, replacing $x$ by $1/x$ if necessary, that $f$
  is given by a separable polynomial of degree $4$.

  Choosing different $K$-bases $(\eta_2,\eta_3)$ and $\eta_1$ corresponds to a
  change of coordinates
  \begin{equation}
    y\mapsto \frac{\alpha y+\beta}{\gamma y+\delta}, \quad
    x\mapsto \epsilon x.
  \end{equation}
  We may assume, after a suitable change of coordinates as above, that $Y$ is
  given by a Weierstrass equation of the form
  \begin{equation}
      x^4 = ag(y), \qquad g = y^4 + by^2 + cy + d,
  \end{equation}
  with $a,b,c,d\in K$, $a\neq 0$ and $\Delta(g)\neq 0$. This proves (a).

  For later use, we remark that the branch divisor $D=(g)_0\subset\PP^1_K$ of
  the map $\psi$ is equal, by construction, to
  \begin{equation}
    D = C(D_0),
  \end{equation}
  where $D_0=\{\infty,-1,-\zeta_3,-\zeta_3^2\}$ is the branch divisor of
  $\psi_0$ and $C$ is the matrix from \eqref{eq:resolve_cocycle}, considered as
  an element of $\PGL_2(\Kbar)=\Aut_{\Kbar}(\PP^1_{\Kbar})$.

  \bigskip To prove (b) we note that a superelliptic curve $Y$ given by an
  equation $x^4=ag(y)$ as in (a) is a twist of $Y_0$ if and only if there
  exists $C\in \PGL_2(\Kbar)$ such that $D=C(D_0)$, where $D$ is the set of
  roots of $g$. It is well known that this is the case if and only if the
  $j$-invariants of $D$ and $D_0$ are equal. Here the $j$-invariant of a
  divisor $D$ is defined as
  \begin{equation}
    j(D) := 256\frac{(\lambda^2-\lambda + 1)^3}{\lambda^2(\lambda-1)^2},
  \end{equation}
  where $\lambda\in\Kbar$ is the cross ratio of the four
  $\Kbar$-points of $D$. Note that $j(D_0)=0$. A straightforward
  computation shows that $j(D)=j(D_0)$ if and only if $ b^2 + 12 d =
  0$. This concludes the proof of the theorem.
\end{proof}

\begin{definition}
  A \emph{special polynomial} is a polynomial of the form
  \begin{equation}\label{eq:specialpoly}
    g(y) = y^4 + 6 b y^2 + c y - 3 b^2 \in K[y]
    \text{ with } \Delta(g) \neq 0.
  \end{equation}
\end{definition}

\begin{remark}\label{rem:special_isos}
  The proof of Theorem \ref{thm:special_sweq} also yields a
  description of the isomorphisms between special Picard curves,
  analogous to the description in Lemma \ref{lem:isos} in the
  nonspecial case. In fact, we see that two special Picard curves
  $Y:x^4=ag(y)$ and $Y':x^4=a'g'(y)$ are isomorphic over $K$ if and
  only if there exists
  \begin{equation}
    A = \begin{pmatrix} \alpha&\beta\\ \gamma&\delta\end{pmatrix} \in\GL_2(K)
  \end{equation}
  and $\epsilon\in K^\times$ such that
  \begin{equation}
    a'g' = \epsilon^4 a g\left(\frac{\alpha x+\beta}{\gamma x+\delta}\right)
    (\gamma x+\delta)^4.
  \end{equation}
  The image of a special polynomial under an element of $\PGL_2(K)$ need not be
  special. However, each equivalence class is represented by a special
  polynomial. Identifying a special polynomial with the divisor corresponding
  to its roots allows us to interpret $g\circ A$ also if $A(\infty)$ is a root
  of $g$.
\end{remark}

\begin{lemma}\label{lem:specialp=3}
  Let $K$ be a field of characteristic $\neq 2,3$.  Let $g(y) = y^4 + 6 b y^2 +
  c y - 3 b^2 \in K[y]$ be a special polynomial and $L/K$ be a Galois extension
  that contains the splitting field of $g$.  Then $\gamma\in \Gal(L/K)$ acts as
  an odd permutation on the roots of $g$ if and only if
  $\gamma(\zeta_3)=\zeta_3^2$.
\end{lemma}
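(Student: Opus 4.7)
The plan is to apply the classical discriminant criterion for the sign of a Galois permutation and then to show by an explicit calculation that the discriminant of a special polynomial equals $-27$ times a nonzero square in $K$. This forces the quadratic subextension $K(\sqrt{\Delta(g)})$ inside $L$ to coincide with $K(\sqrt{-3}) = K(\zeta_3)$, from which the desired equivalence follows immediately.

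In detail, I would fix an enumeration $\alpha_1,\dots,\alpha_4\in L$ of the roots of $g$ and set $\delta := \prod_{i<j}(\alpha_i - \alpha_j) \in L$. Then $\delta^2 = \Delta(g)$, and because $\Char(K)\neq 2$, any $\gamma\in\Gal(L/K)$ satisfies $\gamma(\delta) = \sgn(\pi_\gamma)\,\delta$, where $\pi_\gamma$ denotes the induced permutation of the roots. Hence $\gamma$ acts as an odd permutation on the roots of $g$ if and only if it acts nontrivially on the quadratic subextension $K(\delta)/K$.

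Next I would substitute $p = 6b$, $q = c$, $r = -3b^2$ into the classical discriminant formula
\begin{equation}
\Delta(y^4 + py^2 + qy + r) = 16 p^4 r - 4 p^3 q^2 - 128 p^2 r^2 + 144 p q^2 r - 27 q^4 + 256 r^3
\end{equation}
and, after expanding and collecting terms, obtain
\begin{equation}
\Delta(g) = -27 \bigl( c^2 + 64 b^3 \bigr)^2.
\end{equation}
Since $g$ is separable, $c^2 + 64 b^3 \in K^\times$, so $K(\delta) = K(\sqrt{-27}) = K(\sqrt{-3})$. Because $\Char(K)\neq 2,3$, one has $\zeta_3 = (-1+\sqrt{-3})/2$, hence $K(\sqrt{-3}) = K(\zeta_3)$, and the nontrivial element of $\Gal(K(\zeta_3)/K)$ sends $\zeta_3$ to $\zeta_3^2$ and $\delta$ to $-\delta$ simultaneously. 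Combining with the previous step yields the theorem.

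The only nonroutine step is the discriminant calculation itself, which I would carry out by a direct mechanical expansion. The fact that the squarefree part turns out to be exactly $-3$ is not accidental: a special Picard curve is a cyclic cover of degree $4$ branched along a divisor of $j$-invariant $0$, so the quadratic extension detecting the sign of the Galois action on that divisor must be $K(\zeta_3)$. This gives a useful sanity check on the coefficient $-27$ and confirms that no surprises arise in the expansion.
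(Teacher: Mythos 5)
Your proof is correct, but it takes a genuinely different route from the paper's. You use the classical criterion that the parity of the Galois action on the roots is detected by the action on $\delta=\prod_{i<j}(\alpha_i-\alpha_j)$, together with the explicit computation $\Delta(g)=-27\,(c^2+64b^3)^2$, which identifies $K(\delta)$ with $K(\sqrt{-3})=K(\zeta_3)$; the separability hypothesis gives $c^2+64b^3\neq 0$, and $\Char(K)\neq 2$ ensures $\delta\neq-\delta$, so the equivalence follows. (Your discriminant formula agrees with the identity $\Delta(g)=-3^3(64b^3+c^2)^2$ recorded in Section 4.4 of the paper, so even the ``nonroutine'' step is independently confirmed.) The paper instead argues structurally: it first checks the claim by hand for the standard branch divisor $D_0=\{\infty,-1,-\zeta_3,-\zeta_3^2\}$, and then observes that for a general special polynomial the divisor of roots is $C(D_0)$ for some $C\in\PGL_2(L)$, so the two permutation representations of $\gamma$ differ by the cocycle $C^{-1}\gamma(C)$, which lies in $\overline{G}\cong A_4$ and hence does not change the sign. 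Your argument is more elementary and self-contained, needing only the discriminant of a depressed quartic; the paper's argument avoids any computation and, more importantly, exhibits the $A_4$-transport mechanism that is reused immediately afterwards in the descent formalism of Section 4.4 (diagram \eqref{eq:descent_ses} and the character $\chi$). Either proof is acceptable for the lemma as stated.
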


\begin{proof}
  The statement of the lemma holds for the branch divisor $D_0$ of $Y_{0, K}$.

  Let $g(y)\in K[y]$ be an arbitrary special polynomial and $D$ its divisor of
  roots. Any element $\gamma\in \Gal(L/K)$ acts both on $D$ and on $D_0$. The
  permutation representation of $\rho$ acting on $D$ differs from that on $D_0$
  by $C^{-1}\gamma(C)$ for some $C\in \PGL_2(L)$ by the proof of Theorem
  \ref{thm:special_sweq}.(a). The statement of the lemma follows, since
  $C^{-1}\gamma(C)\in\overline{G}\simeq A_4$, by Lemma
  \ref{lem:special_aut}.(b).
\end{proof}

\subsection{Reduction of special Picard curves}\label{sec:red_special}

In this section we characterize the special Picard curves with good reduction
to characteristic $p$. The stable reduction of the special Picard curve
$Y_0/\QQ$ \eqref{eq:special} has been computed in \cite[Section
5.1.3]{MichelDiss}. (See also \cite[Example 5.6]{Picard1}.) This result implies
that every special Picard curve $Y$ has bad reduction to characteristic $2$
over any field extension of $K$ and has potentially good reduction to
characteristic $p\neq 2$.

We assume that $(K,v)$ is a discretely valued field of mixed characteristic
zero. For the results of this section it is no restriction to assume that its
residue field $k$ is algebraically closed.  We write $\OO_K$ for the valuation
ring and $\pi$ for the uniformizer.  The following proposition treats the cases
of residue characteristic $p=2,3$. The result for $p=3$ extends Proposition
\ref{prop:badredwild} to special Picard curves. The proof given here follows
that from \cite[Prop.~3.4]{Picard1}.

\begin{proposition}\label{prop:specialp=3}
  Let $Y$ be a special Picard curve over a discretely valued field $(K, v)$ of
  mixed characteristic zero.
  \begin{enumerate}[(a)]
    \item If $3$ has odd valuation for $v$, then $Y$ has bad reduction at $v$.
    \item If the residue characteristic is $p=2$, then $Y$ has bad reduction
      over any extension of $K$.
  \end{enumerate}
\end{proposition}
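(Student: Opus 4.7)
My plan is to prove (a) by adapting the argument of Proposition \ref{prop:badredwild}, and to prove (b) by invoking the known stable reduction of $Y_0$ at $p=2$ from \cite[Section 5.1.3]{MichelDiss} (see also \cite[Example 5.6]{Picard1}).

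For (a), I assume for contradiction that $Y$ has good reduction at $v$. If $\Y$ is a smooth $\OO_K$-model of $Y$, then the relative automorphism group scheme $\Aut(\Y/\OO_K)$ is finite and étale over $\OO_K$, so every geometric automorphism of $Y$ is already defined over the maximal unramified extension $K^{\mathrm{nr}}$. In particular, the four distinguished order-$3$ subgroups from Lemma \ref{lem:special_aut}(c) are all defined over $K^{\mathrm{nr}}$. Passing to $K^{\mathrm{nr}}$ preserves both good reduction and the valuation of $3$, so I may assume without loss of generality that one of these subgroups $G_0$ is $K$-rational. The proof of Lemma \ref{lem:minlong} then applies to the pair $(Y, G_0)$ and produces a long Weierstrass equation
\[
  (a_0 y^3 + a_1(x,z) y^2 + a_2(x,z) y) z = a_4(x,z), \qquad a_1^2 = 3 a_0 a_2,
\]
which by Proposition \ref{prop:minlong} may be taken to be integral with unit discriminant. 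From here the argument is identical to Proposition \ref{prop:badredwild}: smoothness of the reduction forces $v(a_0) = 0$, and then $2 v(a_1) = v(3) + v(a_2)$ combined with the oddness of $v(3)$ forces $v(a_2) \geq 1$ and hence $v(a_1) \geq 1$. The reduction is therefore of the form $\bar a_0\, y^3 z = \bar a_4(x,z)$, which is not smooth in characteristic $3$, a contradiction.

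For (b), I use that the stable reduction type is a geometric invariant, depending only on $Y\otimes_K\Kbar$. By Lemma \ref{lem:special}, every special Picard curve over $K$ is $\Kbar$-isomorphic to $Y_0$, so the stable reduction of $Y$ at any prime above $2$ coincides with that of $Y_0$. The computation in \cite[Section 5.1.3]{MichelDiss} shows that $Y_0$ has singular stable reduction at $p=2$, so the same holds for $Y$, and $Y$ therefore does not acquire good reduction over any extension of $K$ with residue characteristic $2$.

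The main obstacle in (a) is the initial reduction to the case of a $K$-rational distinguished subgroup: it relies on the fact, slightly outside the machinery developed in the paper, that the automorphism group scheme of a smooth proper curve of genus at least $2$ is finite étale, so that good reduction forces an unramified Galois action on the geometric automorphism group. Once this is in hand, the remainder is a direct translation of the proof of Proposition \ref{prop:badredwild} to the special case.
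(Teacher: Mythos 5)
Your proof is correct, and part (a) takes a genuinely different route from the paper's. The paper proves (a) via the stable model: it passes to a Galois extension $L/K$ over which $Y$ has good reduction, notes that the branch divisor of $\psi\colon Y\to Y/\langle\tau\rangle$ specializes to four distinct points of the special fiber, and uses Lemma \ref{lem:specialp=3} to produce an inertia element $\gamma$ with $\gamma(\zeta_3)=\zeta_3^2$ (which exists precisely because $v(3)$ is odd) acting as an odd permutation on those points, hence nontrivially on $\overline{Y}$; this obstructs descent of the smooth model to $\OO_K$. You instead reduce to the discriminant argument of Proposition \ref{prop:badredwild} by first making one distinguished order-$3$ subgroup rational over $K^{\rm nr}$. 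That route recycles the plane-quartic machinery of Section \ref{sec:integred} and avoids stable reduction entirely in (a); the paper's choice has the advantage that Lemma \ref{lem:specialp=3} and the action on the reduced branch divisor are reused almost verbatim in Proposition \ref{prop:f3special} and Theorem \ref{thm:mincond}. One caveat on your reduction step: the automorphism scheme of a smooth proper curve of genus $\geq 2$ over $\OO_K$ is finite and \emph{unramified}, but not in general \'etale --- flatness can fail when the automorphism group jumps in the special fiber. The consequence you actually need, namely that good reduction forces inertia to act trivially on $\Aut_{\Kbar}(Y_{\Kbar})$, still holds: automorphisms extend to the (unique) smooth model over $\OO_L$ and specialize injectively by \cite[Proposition 10.3.38]{liu}, which the paper already invokes in the proof of Proposition \ref{prop:goodredternary}, and inertia acts trivially on the special fiber. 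With that adjustment your argument is complete; note also that Lemma \ref{lem:minlong} and Proposition \ref{prop:minlong} are stated only for nonspecial curves, but, as you indicate, their proofs apply verbatim to the pair $(Y,G_0)$ once $G_0$ and its distinguished branch point are rational. Part (b) is exactly the paper's argument.
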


\begin{proof}
  We choose an equation \eqref{eq:special2} for $Y/K$.  Let $L/K$ be a Galois
  extension such that $Y$ has good reduction over $L$. After possibly extending
  $L$, we may assume that $L$ contains the splitting field of $g$ and a
  primitive $3$rd root of unity $\zeta_3$.  Let $\mathcal{Y}$ denote the stable
  model of $Y_{\Kbar}$. Define $\mathcal{W} = \mathcal{Y} / \langle \tau
  \rangle$ and write $\overline{Y}$ and $\overline{W}$ for the special fibers
  of $\mathcal{Y}$ and $\mathcal{W}$, respectively.

  The branch divisor $D$ of $\psi:Y\to Y/\langle \tau\rangle=:W\cong \PP^1_y$
  extends to an \'etale divisor over $\mathcal{W}$. The image $\overline{D}$ of
  $D$ in $\overline{W}$ therefore consists of $4$ distinct $k$-rational points.
  Let $\gamma\in \Gal(L/K)$ be such that $\gamma(\zeta_3)=\zeta_3^2$. Such an
  element exists, by our assumption on $K$.  Lemma \ref{lem:specialp=3} implies
  that $\gamma$ acts nontrivially on $\overline{D}$ and hence on
  $\overline{Y}$.  As in the proof of \cite[Prop.~3.4]{Picard1}, we conclude
  that there does not exist a smooth model of $Y$ over $K$.

  Statement (b) follows from \cite[Section 5.1.3]{MichelDiss}.
\end{proof}

\begin{proposition}\label{prop:special_reduction}
  Let $Y/K$ be a special Picard curve given by an equation $x^4 = a g(y)$.
  Assume that the residue characteristic is different from $2$. Then $Y$ has
  good reduction at $v$ if and only if
  \begin{enumerate}[(a)]
  \item
    $v(a)\equiv 0 \pmod{4}$, and
  \item
    the splitting field of $g$ is unramified at $v$.
  \end{enumerate}
\end{proposition}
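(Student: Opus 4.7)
The plan is to follow the template of Proposition \ref{prop:goodredbinary} closely, treating each direction in turn.

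For the direction (a)+(b) $\Rightarrow$ good reduction I would build an explicit integral model. Using (a), writing $a = u \pi^{4k}$ with $u \in \OO_K^\times$ and substituting $x \mapsto \pi^k x$ puts us in the case $v(a) = 0$. Using (b), the splitting field $L/K$ of $g$ is unramified and the roots of $g$ lie in $\OO_L$. Combining the $\PGL_2(K)$-equivalence of special polynomials from Remark \ref{rem:special_isos} with a Galois descent argument from $L$ to $K$, one can find an equivalent equation $x^4 = u g'(y)$ in which $u \in \OO_K^\times$ and $g' \in \OO_K[y]$ reduces modulo $\pi$ to a separable polynomial of degree $4$. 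The resulting model is smooth: at the four finite branch points because the roots of $\bar g'$ are simple, and at the four points above $y = \infty$ by passing to the chart $\tilde x = x/y$, $t = 1/y$, in which the equation becomes $\tilde x^4 = u(1 + O(t))$ with unit constant term.

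For the converse direction I would follow the second half of the proof of Proposition \ref{prop:goodredbinary} almost verbatim. Given a smooth proper $\OO_K$-model $\mathcal Y$, the order-$4$ automorphism $\tau$ extends to $\mathcal Y$ (using residue characteristic $\neq 2$), and the quotient $\mathcal X = \mathcal Y / \langle \tau \rangle$ is a smooth $\OO_K$-model of $\PP^1_K$ on which the closure of the branch divisor $(g)_0$ is \'etale; this is precisely condition (b). Condition (a) then follows from the description of the stable reduction of the $\mu_4$-Kummer cover $\mathcal Y \to \mathcal X$ supplied by \cite[Proposition~4.5]{superell}, in direct analogy with the proof of Proposition \ref{prop:goodredbinary}: the Kummer class of $a g(y)$, pulled back to $\mathcal X$, must have valuations divisible by $4$ on every vertical component, and since $g(y)$ contributes only \emph{simple} zeros on the \'etale branch divisor, the remaining obstruction is exactly $v(a) \pmod{4}$.

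The main obstacle I expect is the descent step in the first direction: the $\PGL_2$-transformation moving the roots of $g$ into positions that remain distinct modulo $\pi$ is naturally defined over the splitting field $L$, and one must show it can be chosen to descend to $K$, up to composition with the automorphisms of the branch divisor (which by Lemma \ref{lem:special_aut} form the group $\overline G \simeq A_4$). Concretely, this amounts to formulating an analogue of Algorithm \ref{alg:red} adapted to special quartics and to the full $\PGL_2$-group rather than its affine subgroup.
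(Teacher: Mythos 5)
Your proposal takes a genuinely different and more hands-on route than the paper. The paper's own proof is essentially a two-line delegation to \cite{superell}: for the forward direction it invokes \cite[Cor.~4.6]{superell} (which, for $K$ complete with $k$ algebraically closed, produces a stable model over $\OO_K$ under hypotheses (a) and (b)), combined with the fact that $Y$ has potentially good reduction to characteristic $p\neq 2$; for the converse it notes that any Galois $L/K$ over which $Y$ acquires good reduction must contain the splitting field of $g$ and a $4/\gcd(v(a),4)$-th root of $a$, so that if (a) or (b) fails then $\Gal(L/K)$ acts nontrivially on $\Yb$ by \cite[Section~5]{superell}, obstructing descent of the smooth model. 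Your converse, modelled on Proposition \ref{prop:goodredbinary}, is a sound explicit version of this, provided you justify that it is the $K$-rational group $\langle\tau\rangle$ (not the individual automorphism $\tau$, which is only defined over $K(\zeta_4)$) that extends to $\mathcal Y$, and that Riemann--Hurwitz on the special fibre forces the closure of the branch divisor to be \'etale. Your forward direction, building a smooth plane model directly, is more elementary than the paper's and is a legitimate alternative.

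The genuine gap is the one you flag but do not close, and it is resolved by two observations absent from your write-up rather than by an analogue of Algorithm \ref{alg:red}. First, the standing assumption of Section \ref{sec:red_special} is that $k$ is algebraically closed, so condition (b) says that $g$ \emph{splits over $K$}; there is no descent from $L$ to $K$ to perform. Second, and more seriously, the $\PGL_2(K)$-transformation putting the roots into distinct residue classes must be shown to preserve condition (a): under $y\mapsto A(y)$ the constant $a$ is multiplied by $\prod_i(\alpha-\gamma\xi_i)$, whose valuation is not divisible by $4$ for a general M\"obius transformation, so $v(a)\pmod{4}$ is \emph{not} invariant under arbitrary normalizations. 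What saves the argument is that a special polynomial has vanishing $y^3$-coefficient, so the barycenter of its roots is $0$ and lies in the smallest disk containing them (whose radius is an integer $\rho$ once $g$ splits over $K$, and which realizes the stably marked model because the cross-ratio is a unit $\not\equiv 0,1$ since $j=0$). Hence the normalization can be taken to be the scaling $y\mapsto\pi^{\rho}y$, which multiplies $a$ by $\pi^{4\rho}$ and preserves $v(a)\pmod{4}$. The same invariance is needed at the end of your converse direction: the Kummer computation on the model $\mathcal Y/\langle\tau\rangle$ gives $v(a')\equiv 0\pmod 4$ for the constant of an \emph{equivalent} equation adapted to that model, and without the scaling argument this does not yet say anything about the constant $a$ of the given equation.
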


Note that Proposition \ref{prop:specialp=3}.(a) implies that the conditions in
Proposition \ref{prop:special_reduction} are never satisfied in the case that
$K/\QQ_3^{\nr}$ is unramified. We refer to Example \ref{exa:specialp=3} for a
closer consideration of such a case of bad reduction.

\begin{proof}
  Assume  that the conditions (a) and (b) are satisfied. Since we assume that
  $K$ is complete with respect to $v$ and  that the residue field $k$ is
  algebraically closed \cite[Cor.~4.6]{superell} implies that there exists a
  stable model of $Y$ over $\OO_K$. Since $Y$ has potentially good reduction,
  it has good reduction over $K$.

  Assume that Condition (a) or (b) is not satisfied, and let $L/K$ be a Galois
  extension that contains both a $4/\gcd(v(a), 4)$-th root of $a$ and the
  splitting field of $g$. Then $Y_{\Kbar}$ has good reduction over $L$. It
  follows from \cite[Section 5]{superell} that the Galois group $\Gal(L/K)$
  acts nontrivially on the reduction $\overline{Y}$ of $Y_{\Kbar}$. (This is
  similar to the argument in the proof of Proposition \ref{prop:specialp=3}.)
  We conclude that $Y$ does not have good reduction over $K$.
\end{proof}

\subsection{Good reduction outside $p=2,3$}
\label{sec:classification}

We will determine all special Picard curves over $\QQ$ with good reduction
outside $p=2,3$ in Theorem \ref{thm:special_2_3}. It follows from Proposition
\ref{prop:specialp=3} that this is the smallest possible set of primes of bad
reduction of a special Picard curves over $\QQ$. The method we use is more
general and can be applied over any number field using any finite set of
primes. The crucial finiteness result one needs is that there are only finitely
many number fields of given degree that are unramified outside a given set of
primes. This result follows directly from Hermite's Theorem (see \cite[Chapter
III, Theorem (2.16)]{NeukirchAZT}). In our situation, the relevant finite list
of number fields can be obtained from the database of number fields by Jones
and Roberts \cite{DatabaseNF}.

By Theorem \ref{thm:special_sweq} any special Picard curve over $\QQ$ is given
by an equation
\begin{equation}
   Y :\; x^4 = ag(y), \quad g = y^4+6by^2+cy-3b^2,
\end{equation}
where $a,b,c\in\ZZ$, $a\neq 0$, and $0\neq \Delta(g) = -3^3(64b^3+c^2)^2$. We
may further assume that $a$ has no fourth power as a nontrivial divisor. Then
Proposition \ref{prop:special_reduction} states that $Y$ has good reduction
outside $p=2,3$ if and only if
\begin{enumerate}[(i)]
  \item
    the splitting field of $g$ is unramified outside $2,3$, and
  \item
    $a = \pm 2^\mu 3^\nu$, with $0\leq \mu,\nu\leq 3$.
\end{enumerate}
We will first find all special polynomials $g$ as in (i), up to isomorphism
(Proposition \ref{prop:pol_list}). Most of the proof is formulated in a more
general setup, which is introduced below. The main result (Theorem
\ref{thm:special_2_3}) is then a direct consequence of Proposition
\ref{prop:pol_list}.

\bigskip
We return to the general assumption of this section. So $K$ is a field of
characteristic $\neq 2,3$ with algebraic closure $\Kbar$, and
$\Gamma:=\Gal(\Kbar/K)$. In addition, we fix a subfield $L\subset\Kbar$ which
is a Galois extension of $K$ containing the third root of unity $\zeta_3$. (In
our final application, $K=\QQ$ and $L/\QQ$ is the maximal extension unramified
outside $2,3$.)

We recall the setup from Sections \ref{sec:special_G} and
\ref{sec:special_descent}.  We denote by $Y_0$ the special Picard curve over
$\Kbar$ defined by \eqref{eq:Y_0} and $Y_{0,K}$ for the $K$-model of $Y$
defined by the same equation. The choice of this model is determined by an
action of $\Gamma$ on $Y_0$, which we call the standard action. The branch
divisor of $\psi_0:Y_0\to Y_0/\langle \tau\rangle\simeq\PP^1$ is denoted by
$D_0$. Note that $D_0$ splits over $L$. We order the points of $D_0(\Kbar)$ as
follows:
\begin{equation} \label{eq:D_0_numbering}
   \alpha_1 := \infty,\; \alpha_2:=-1,\; \alpha_3:=-\zeta_3,\; \alpha_4:=-\zeta_3^2.
\end{equation}
This identifies $\overline{G}=\Aut_{\Kbar}(D_0)\subset\PGL_2(\Kbar)$ with
$A_4$. The exact sequence \eqref{eq:descent1} induces a diagram with exact
rows, where the two lower left vertical arrows describe the actions on the set
of roots:
\begin{equation} \label{eq:descent_ses}
  \xymatrix{
    1 \ar[r] & G \ar[r]\ar@{->>}[d] & \Aut_K(Y_0) \ar[r]\ar@{->>}[d] & \Gamma \ar[r]\ar@{=}[d] & 1 \\
    1 \ar[r] & \overline{G}\ar[r]\ar[d]^{\cong} & \Aut_K(D_0) \ar[r] \ar[d]^{\phi} & \Gamma\ar[r]\ar[d]^{\chi} & 1 \\
    1 \ar[r] & A_4\ar[r]&S_4 \ar[r]^{\sgn} & \{\pm 1\} \ar[r] & 1
  }
\end{equation}
Lemma \ref{lem:specialp=3} implies that $\chi:\Gamma\to\{\pm 1\}$ is the
character given by
\begin{equation}\label{eq:chi}
     \gamma(\zeta_3) = \zeta_3^{\chi(\gamma)}, \qquad \text{ for
     }\gamma\in \Gamma.
\end{equation}
We write $s_0:\Gamma\to \Aut_K(Y_0)$ for the section of $\Aut_K(Y_0)\to \Gamma$
corresponding to the standard action of $\Gamma$ on $Y_0$. It induces a section
$\overline{s}_0:\Gamma\to\Aut_K(D_0)$.

We are interested in describing the set
\begin{equation}\label{eq:defS}
  \mathcal{S} := \{ D\subset\PP^1_K \mid \text{there exists } C\in \PGL_2(\Kbar)\text{ with } \;
        D = C(D_0) \;\text{such that $D$ splits over $L$} \}/\sim,
\end{equation}
where $\sim$ means \emph{modulo the $\PGL_2(K)$-action}. By Theorem
\ref{thm:special_sweq}, every element of $\mathcal{S}$ can be represented by a
divisor $D=(g)_0$, where $g\in K[y]$ is a special polynomial.

Let $D =(g)_0\in \mathcal{S}$ be given. Let $Y$ be the special Picard curve
over $K$ given by $Y :\; x^4=g(y)$; it is a $K$-twist of $Y_{0,K}$ and hence
corresponds to a section $s:\Gamma\to \Aut_K(Y_0)$, up to conjugation by $G$.
Let $\overline{s}:\Gamma\to\Aut_K(D_0)$ be the induced section. Composition of
$\overline{s}$ with the map $\Aut_{K} (D_0) \to S_4$ from Diagram
\eqref{eq:descent_ses} yields a homomorphism $\rho:\Gamma\to S_4$ which
satisfies the following two conditions:
\begin{equation}\label{eq:descent_rho_condition}
      {\sgn}\circ \rho = \chi, \quad \text{and\;\; $\rho$ factors over $\Gal(L/K)$}.
\end{equation}

We call $\rho$ the \emph{homomorphism induced by} $D$. It has the following
concrete interpretation, which does not involve the curve $Y$. By definition
there exists an element $C\in\Aut_{\Kbar}(\PP^1_{\Kbar})=\PGL_2(\Kbar)$ such
that $D=C(D_0)$. In particular, we obtain a bijection between the geometric
points of $D_0$ and those of $D$ and therefore, via \eqref{eq:D_0_numbering}, a
numbering of the four geometric points of $D$. The homomorphism $\rho$
corresponds to the action of $\Gamma$ on these points, with respect to this
numbering. We note that the choice of $C$ is unique up to an element of
$\overline{G}\cong A_4$. It follows that $\rho$ is uniquely determined by
$D\in\mathcal{S}$, up to conjugation by an even permutation.

\bigskip We reverse this construction. Let $\rho:\Gamma\to S_4$ be a
homomorphism satisfying \eqref{eq:descent_rho_condition}. Chasing diagram
\eqref{eq:descent_ses} shows that given $\gamma \in \Gamma$, there is a unique
lift $\alpha$ of $\gamma$ to $\Aut_K (D_0)$ with $\phi (\alpha) = \rho
(\gamma)$. This implies that $\rho$ comes from a unique section $\overline{s}
:\Gamma\to\Aut_K(D_0)$. We thus obtain a cocycle
\begin{equation}
    (A_\gamma)_{\gamma\in\Gamma} \in H^1(\Gamma, \overline{G}), \quad
      A_\gamma:= s(\gamma)\circ s_0(\gamma)^{-1}.
\end{equation}

Let $c(\rho)\in H^1(\Gamma,\PGL_2(\Kbar))$ denote the image of this cocycle
under the natural map
\begin{equation}
    H^1(\Gamma, \overline{G}) \to H^1(\Gamma, \PGL_2(\Kbar)).
\end{equation}
We call $c(\rho)$ the \emph{obstruction class} of $\rho$. Recall also that the
short exact sequence
\begin{equation}
    1 \to \Kbar^\times \to \GL_2(\Kbar) \to \PGL_2(\Kbar) \to 1
\end{equation}
gives rise to an injection
\begin{equation}
    1 = H^1(\Gamma,\GL_2(\Kbar)) \to H^1(\Gamma, \PGL_2(\Kbar)) \to
       H^2(\Gamma, \Kbar^\times).
\end{equation}
The equality follows from Hilbert 90 \cite[Prop.~3, \S X.1]{SerreCL}. We
therefore may consider the obstruction class $c(\rho)$ as element of the Brauer
group $\Br(K)=H^2(\Gamma, \Kbar^\times)$ of the field $K$. A calculation as in
Remark \ref{rem:A4tilde} shows that $c(\rho)$ even lies in
$\Br(K)[2]=H^2(\Gamma, \{\pm 1\})$.

The following proposition states that a homomorphism $\rho$ is induced from an
element $D\in \mathcal{S}$ if and only if the obstruction class $c(\rho)$
vanishes.  In what follows we identify $\rho$ with its equivalence class under
conjugacy with $A_4$. The theory of Weil descent then implies the following.

\begin{proposition} \label{prop:descent_rho}
  There is a bijection between the set $\mathcal{S}$ and the set of
  homomorphisms $\rho:\Gamma \to S_4$ satisfying
  \eqref{eq:descent_rho_condition} and such that $c(\rho)$ is trivial, modulo
  the action of $A_4$ by conjugation. This bijection depends only on the choice
  of the element $D_0\in \mathcal{S}$.
\end{proposition}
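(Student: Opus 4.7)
The plan is to construct mutually inverse maps by non-abelian Galois descent. Given $D \in \mathcal{S}$, fix any $C \in \PGL_2(\Kbar)$ with $D = C(D_0)$; then $C$ transports the fixed numbering of $D_0$ to a numbering of $D$, and the Galois action on this labeled four-point set produces a homomorphism $\rho_C:\Gamma \to S_4$. A different choice $C' = Cg$ with $g \in \overline{G} = A_4$ (the stabilizer of $D_0$) conjugates $\rho_C$ by $g^{-1}$, so the class $[\rho_C]$ is well-defined modulo $A_4$-conjugacy; it is also unchanged when $D$ is replaced by $T(D)$ for $T \in \PGL_2(K)$, since $T$ commutes with the Galois action. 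The conditions \eqref{eq:descent_rho_condition} are immediate: $\sgn \circ \rho_C = \chi$ is Lemma \ref{lem:specialp=3} read under the labeling, and $\rho_C$ factors through $\Gal(L/K)$ by the hypothesis that $D$ splits over $L$. Finally, the cocycle $A_\gamma = C^{-1}\gamma(C)$ representing $c(\rho_C)$ is by construction a coboundary in $\PGL_2(\Kbar)$, so $c(\rho_C) = 0$. This defines the forward map $\Phi$.

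For the inverse $\Psi$, given $\rho$ satisfying the hypotheses with $c(\rho) = 0$, I would first lift $\rho$ to a section $\overline{s}:\Gamma \to \Aut_K(D_0)$ by a diagram chase on \eqref{eq:descent_ses}. For each $\gamma \in \Gamma$, the $\overline{G}$-coset of lifts of $\gamma$ in $\Aut_K(D_0)$ maps bijectively under $\phi$ onto the coset of $A_4$ in $S_4$ consisting of permutations of sign $\chi(\gamma)$; this uses $\phi|_{\overline G}\colon \overline{G}\iso A_4$ from Lemma \ref{lem:special_aut}.(b) together with commutativity of \eqref{eq:descent_ses}. Since $\sgn(\rho(\gamma)) = \chi(\gamma)$, there is a unique lift $\overline{s}(\gamma)$ with $\phi(\overline{s}(\gamma)) = \rho(\gamma)$, and uniqueness upgrades $\overline{s}$ to a group homomorphism. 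The resulting cocycle $\overline{A}_\gamma = \overline{s}(\gamma)\,\overline{s}_0(\gamma)^{-1} \in \overline{G}$ represents $c(\rho)$ in $H^1(\Gamma, \PGL_2(\Kbar))$, so the hypothesis $c(\rho) = 0$ yields $C \in \PGL_2(\Kbar)$ with $\overline{A}_\gamma = C^{-1}\gamma(C)$. Setting $D := C(D_0)$, a direct calculation gives $\gamma(D) = \gamma(C)(D_0) = C\,\overline{A}_\gamma(D_0) = C(D_0) = D$ because $\overline{A}_\gamma \in \overline{G}$ stabilizes $D_0$, so $D$ is $K$-rational. Since $\rho$ factors through $\Gal(L/K)$, the $\Gamma$-action on the geometric points of $D$ does as well, so $D$ splits over $L$ and lies in $\mathcal{S}$.

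The two constructions are mutual inverses by direct verification: starting from $D$, the cocycle produced by $\Phi$ is already split by the chosen $C$, so $\Psi$ returns $C(D_0) = D$; starting from $\rho$, the labeling induced by the constructed $C$ has Galois action precisely $\rho$. Two splittings of the same cocycle differ by an element of $\PGL_2(K)$, yielding the same class in $\mathcal{S}$, while conjugating $\rho$ by $g_0 \in A_4$ replaces $C$ by $Cg_0^{-1}$ and does not change $D$. The dependence only on $D_0$ is built into the choice of numbering identifying $\overline{G}$ with $A_4$ and into the standard section $s_0$. The main obstacle is precisely this matching of equivalence relations --- $\PGL_2(K)$-equivalence of divisors on one side and inner $A_4$-conjugacy of sections on the other --- which is exactly what the non-abelian Weil descent formalism of \cite[Chapter V, \S 4]{SerreAGCF} is designed to package; once invoked, the rest is diagram chasing.
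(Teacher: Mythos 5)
Your argument is correct and follows exactly the route the paper intends: the paper's ``proof'' consists of the construction of the induced homomorphism $\rho$ and of the section $\overline{s}$ in the paragraphs preceding the proposition, followed by an appeal to the theory of Weil descent, and your two maps $\Phi$ and $\Psi$ are precisely the explicit form of that correspondence (including the identification of the ambiguity in $C$ with $\overline{G}\cong A_4$ and the splitting of the cocycle $\overline{A}_\gamma=C^{-1}\gamma(C)$ when $c(\rho)$ vanishes). You simply spell out the details that the paper delegates to \cite[Chapter V, \S 4]{SerreAGCF}; nothing is missing.
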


Our next goal is to make the obstruction class $c(\rho)$ more explicit. For
this we need some preparation. Recall from \cite[\S 1.5]{SerreWitt} that there
exists a unique central extension
\begin{equation} \label{eq:S_4_extension}
    1 \to \{\pm 1\} \to \tilde{S}_4\to S_4 \to 1
\end{equation}
of $S_4$ with the property that transpositions lift to elements of order $2$ in
$\tilde{S}_4$ and $(2,2)$-cycles lift to elements of order $4$. The extension
\eqref{eq:S_4_extension} corresponds to a certain class
\begin{equation}
     s_4 \in H^2(S_4,\{\pm 1\}).
\end{equation}

For a homomorphism $\rho:\Gamma\to S_4$ we denote by $\rho^\ast s_4$ the
element of $H^2(\Gamma,\{\pm 1\})$ obtained via restriction along $\rho$. The
short exact sequence
\begin{equation}
    1 \to \{\pm 1\} \to \Kbar^\times \to \Kbar^\times \to 1
\end{equation}
induces an injection
\begin{equation}
     1 = H^1(\Gamma, \Kbar^\times) \to H^2(\Gamma,\{\pm 1\}) \to H^2(\Gamma,\Kbar^\times).
\end{equation}
So for a given homomorphism $\rho:\Gamma\to S_4$, we may consider both
$\rho^\ast s_4$ and the obstruction class $c(\rho)$ as elements of $\Br(K)$.

\begin{lemma} \label{lem:descent_obstruction}
  Let $\rho:\Gamma\to S_4$ be a homomorphism satisfying
  \eqref{eq:descent_rho_condition}, and let $c(\rho)\in\Br(K)$ be the
  corresponding obstruction class. Then
  \begin{equation}
       c(\rho) = \rho^\ast s_4.
  \end{equation}
\end{lemma}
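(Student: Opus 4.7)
The plan is to reduce the identity $c(\rho) = \rho^\ast s_4$ to an explicit comparison of two $2$-cocycles with values in $\{\pm 1\}$, and then to verify that they agree on the nose via a group-theoretic argument.

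The key input is that two central extensions of $\overline{G} \cong A_4$ by $\{\pm 1\}$ coincide. By Remark~\ref{rem:A4tilde}, the kernel of $\det: G \to \{\pm 1\}$ is a subgroup $G_1 \subset \SL_2(\Kbar)$ isomorphic to $\tilde{A}_4$, and the sequence $1 \to \{\pm 1\} \to G_1 \to \overline{G} \to 1$ obtained by restricting \eqref{eq:two_ses} to determinant $\pm 1$ is the unique nontrivial central extension of $A_4$ by $\{\pm 1\}$. Independently, the defining property of $\tilde{S}_4$ implies that the pullback of $\tilde{S}_4 \to S_4$ along $A_4 \hookrightarrow S_4$ is that same extension. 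Fixing an isomorphism $G_1 \cong \tilde{A}_4 \hookrightarrow \tilde{S}_4$ compatible with the inclusions into $S_4$ and $\PGL_2(\Kbar)$ respectively places both constructions in a common group-theoretic picture.

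I would then make the two cocycles explicit using compatible set-theoretic lifts. Let $t \in \tilde{S}_4$ be the unique order-$2$ lift of $(3\,4)$; since $\overline{\rho}_0: \Gamma \to S_4$ factors through $\chi$ with image in $\{e,(3\,4)\}$, the rule $\tilde{\overline{\rho}}_0(\gamma) := t^{(1-\chi(\gamma))/2}$ is a genuine homomorphism lifting $\overline{\rho}_0$. Choose a set-theoretic section $\iota: \overline{G} \to G_1$, which I view simultaneously as a section into $\GL_2(\Kbar)$ and into $\tilde{S}_4$, and set $\tilde{\rho}(\gamma) := \iota(A_\gamma)\,\tilde{\overline{\rho}}_0(\gamma)$. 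Using normality of $G_1 \cong \tilde{A}_4$ in $\tilde{S}_4$, a direct expansion yields
\[
\rho^\ast s_4(\gamma,\delta) \;=\; \iota(A_\gamma)\cdot\bigl[\tilde{\overline{\rho}}_0(\gamma)\,\iota(A_\delta)\,\tilde{\overline{\rho}}_0(\gamma)^{-1}\bigr]\cdot\iota(A_{\gamma\delta})^{-1},
\]
while the connecting map $H^1(\Gamma,\PGL_2(\Kbar)) \to H^2(\Gamma,\Kbar^\times)$ produces
\[
c(\rho)(\gamma,\delta) \;=\; \iota(A_\gamma)\cdot\gamma\bigl(\iota(A_\delta)\bigr)\cdot\iota(A_{\gamma\delta})^{-1},
\]
where $\gamma(\cdot)$ denotes Galois action on matrix entries. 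Both values lie in $\{\pm 1\}$ by a determinant argument.

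Comparing these formulas, the identity reduces to showing that the two twisted $\Gamma$-actions on $G_1$ coincide, i.e., that
\[
\tilde{\overline{\rho}}_0(\gamma)\,\tilde{A}\,\tilde{\overline{\rho}}_0(\gamma)^{-1} \;=\; \gamma(\tilde{A}) \qquad \text{for all } \gamma \in \Gamma,\ \tilde{A} \in G_1.
\]
This is the crux. Both sides define automorphisms of $G_1 \cong \tilde{A}_4$, and their induced automorphisms of $\overline{G} \cong A_4$ are the same: the right side because the $K$-rational basis $(\omega_2,\omega_3)$ of $V_2$ makes $G \hookrightarrow \GL_2(\Kbar)$ $\Gamma$-equivariant for the standard structure, so the entry-wise Galois action agrees with the $\overline{s}_0$-action on $\overline{G}$; and the left side because it projects to conjugation by $\overline{\rho}_0(\gamma) \in S_4$ on $\overline{G}$, which by construction of $\overline{s}_0$ is the very same standard action. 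Two automorphisms of $\tilde{A}_4$ with the same image in $\Aut(A_4)$ differ by a homomorphism $\tilde{A}_4 \to Z(\tilde{A}_4) = \{\pm 1\}$, which must factor through the abelianization $\tilde{A}_4^{\mathrm{ab}} \cong \ZZ/3\ZZ$. Since $\Hom(\ZZ/3\ZZ,\{\pm 1\}) = 0$, the two automorphisms coincide, so the two cocycles are literally equal and the lemma follows. The main obstacle will be this last compatibility: I want to sidestep any direct sign computation inside $\GL_2(\Kbar)$ by packaging everything into the abstract group-theoretic statement above.
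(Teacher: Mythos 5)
Your proof is correct, and it takes a genuinely more explicit route than the paper's. The paper stays at the level of nonabelian cohomology sets: it notes $\Delta(\rho_0)=\rho_0^*s_4=1$, identifies the fibre $\pi^{-1}(\chi)$ with $H^1(\Gamma,(A_4)_{\rho_0})$ by twisting, asserts the isomorphisms of $\Gamma$-groups $G_1\cong(\tilde{A}_4)_{\rho_0}$ and $\overline{G}\cong(A_4)_{\rho_0}$, and then invokes \cite[Proposition 44]{SerreCG} to identify the restriction of $\Delta$ to that fibre with the connecting map of $1\to\{\pm 1\}\to G_1\to\overline{G}\to 1$, which equals $c(\rho)$ by compatibility with $\GL_2(\Kbar)\to\PGL_2(\Kbar)$. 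You instead unwind both classes into explicit $2$-cocycles and reduce the whole lemma to the single identity $\tilde{\overline{\rho}}_0(\gamma)\,\tilde{A}\,\tilde{\overline{\rho}}_0(\gamma)^{-1}=\gamma(\tilde{A})$ on $G_1$ --- which is exactly the content of the $\Gamma$-group isomorphism $G_1\cong(\tilde{A}_4)_{\rho_0}$ that the paper asserts without argument --- and you then prove it cleanly from $\Hom(\tilde{A}_4^{\mathrm{ab}},\{\pm 1\})=\Hom(\ZZ/3\ZZ,\{\pm 1\})=0$, after checking that both automorphisms induce conjugation by $\rho_0(\gamma)$ on $A_4$. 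So your argument trades Serre's twisting formalism for a direct cocycle computation and in doing so supplies a detail the paper leaves implicit; the cost is that you must be careful that all identifications ($G_1\cong\tilde{A}_4$ over $\overline{G}\cong A_4$, and the two copies of $\{\pm 1\}$) are the canonical ones, which you are. One small slip: the order-$2$ lift $t$ of $(3\,4)$ to $\tilde{S}_4$ is not unique, since if $t^2=1$ then also $(-t)^2=1$, so \emph{both} lifts have order $2$ (that is the defining property of $\tilde{S}_4$); but either choice yields a homomorphic lift of $\overline{\rho}_0$, and conjugation by $t$ and by $-t$ agree because $-1$ is central, so nothing in the argument breaks.
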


\begin{proof}
  Let $\tilde{A}_4\subset\tilde{S}_4$ denote the inverse image of $A_4\subset
  S_4$. These groups fit into the following commutative diagram with exact rows
  and columns:
  \begin{equation} \label{eq:monster_diagram1}
  \xymatrix{
         &       &  1 \ar[d] &  1 \ar[d] &      &             \\
    1 \ar[r] & \{\pm 1\} \ar[r]\ar[d]^= & \tilde{A}_4 \ar[r]\ar[d]
                  & A_4 \ar[d]\ar[r]  &  1                    \\
    1 \ar[r] & \{\pm 1\} \ar[r]  & \tilde{S}_4 \ar[r]\ar[d]
                & S_4 \ar[d]^{\rm sgn}\ar[r]  &  1             \\
         &       & \{\pm 1\}\ar[d] \ar[r]^= & \{\pm 1\}\ar[d] & \\
         &       &          1               &        1        &
  }
  \end{equation}
  Considering the groups in \eqref{eq:monster_diagram1} as $\Gamma$-groups with
  respect to the trivial action of $\Gamma$, we obtain a commutative diagram of
  nonabelian cohomology sets:
  \begin{equation} \label{eq:monster_diagram2}
  \xymatrix{
     H^1(\Gamma,\tilde{A}_4) \ar[r]\ar[d] & H^1(\Gamma,A_4) \ar[r]\ar[d]
                  & H^2(\Gamma,\{\pm 1\}) \ar[d]^=          \\
     H^1(\Gamma,\tilde{S}_4) \ar[r]\ar[d]^{\tilde{\pi}} & H^1(\Gamma,S_4) \ar[r]^{\Delta}\ar[d]^{\pi}
                  & H^2(\Gamma,\{\pm 1\})                   \\
     H^1(\Gamma,\{\pm 1\}) \ar[r]^= & H^1(\Gamma,\{\pm 1\})  &
  }
  \end{equation}

  The rows and columns of \eqref{eq:monster_diagram2} are exact as sequences of
  pointed sets. We remark that an element of $H^1(\Gamma,H)$ for a group $H$
  with trivial $\Gamma$-action is simply a homomorphism $\Gamma\to H$.  In
  particular, a homomorphism $\rho:\Gamma\to S_4$ satisfying
  \eqref{eq:descent_rho_condition} is simply an element of the fiber
  $\pi^{-1}(\chi)\subset H^1(\Gamma,S_4)$. The definitions of the maps in
  \cite[Chapter I, \S 5]{SerreCG} imply that
  \begin{equation} \label{eq:Delta_rho}
     \Delta(\rho) = \rho^*s_4.
  \end{equation}
  We have $\Delta(\rho_0)=\rho_0^*s_4=1$, where $\rho_0 : \Gamma \to S_4$
  corresponds to the standard divisor $D_0\in\mathcal{S}$ defined in
  \eqref{eq:D0}. Indeed, the image of $\rho_0$ is  trivial if $\zeta_3\in K$
  and generated by a transposition otherwise, and the extension $\tilde{S}_4
  \to S_4$ has the property that transpositions lift to elements of order $2$
  in $\tilde{S}_4$.

  Let $(\tilde{A}_4)_{\rho_0}$ and $(A_4)_{\rho_0}$ be \emph{twisted
  $\Gamma$-groups}, as defined in \cite[Chapter I, \S 5.3]{SerreCG}. The theory
  in \loccit\ shows that the fiber $\pi^{-1}(\chi)\subset H^1(\Gamma,S_4)$ to
  which $\rho$ and $\rho_0$ belong is in canonical bijection with
  $H^1(\Gamma,(A_4)_{\rho_0})$. Moreover, it follows from Diagram
  \eqref{eq:monster_diagram1} that we have isomorphisms of $\Gamma$-groups
  \begin{equation}
   G_1 \cong (\tilde{A}_4)_{\rho_0}, \quad \overline{G} \cong (A_4)_{\rho_0} .
  \end{equation}
  compatible with the natural maps $G_1\to\overline{G}$ and $\tilde{A}_4\to
  A_4$. From this we obtain canonical bijections
  \begin{equation}
    \pi^{-1}(\chi) =H^1(\Gamma,(A_4)_{\rho_0})=
    H^1(\Gamma,\overline{G}), \quad \tilde{\pi}^{-1}(\chi) =
    H^1(\Gamma,(\tilde{A}_4)_{\rho_0})=H^1(\Gamma,G_1).
  \end{equation}

  Since $\Delta(\rho_0)=0$,  \cite[Proposition 44]{SerreCG} implies that the
  restriction of the map $\Delta$ to $\pi^{-1}(\chi) =
  H^1(\Gamma,\overline{G})$ equals the map
  \begin{equation}
       H^1(\Gamma,\overline{G}) \to H^2(\Gamma,\{\pm 1\})
  \end{equation}
  induced from the sequence $1\to G_1\to \overline{G}\to 1$.

  Equation \eqref{eq:two_ses} together with Remark \ref{rem:A4tilde} implies
  that the map $G_1\twoheadrightarrow \overline{G}$ is compatible with
  $\GL_2(\Kbar)\twoheadrightarrow \PGL_2(\Kbar)$. It follows that the image of
  $\rho$ under the map
  \begin{equation}
     \pi^{-1}(\chi)= H^1(\Gamma,\overline{G}) \stackrel{\Delta}{\to}
       H^2(\Gamma,\{\pm 1\})
  \end{equation}
  is equal to the obstruction class $c(\rho)$.  Together with
  \eqref{eq:Delta_rho}, this proves the lemma.
\end{proof}

\begin{example} \label{exa:biquadratic}
  Let $K$ be a field not containing $\zeta_3$. We discuss the case that the
  image of $\rho:\Gamma\to S_4$ is an elementary abelian group with $4$
  elements that is not contained in $A_4$ (the \emph{biquadratic case}).  Up to
  conjugation by an element of $A_4$ we may assume that the image is
  \begin{equation}
    \rho(\Gamma) = \gen{(1\,2), (3\, 4)}.
  \end{equation}
  By Galois theory, we obtain a pair $(M_1, M_2)$ of quadratic subextensions of
  $L/K$, namely $M_1=M^{\langle (3\,4)\rangle}$ and $M_2=M^{\langle
  (1\,2)\rangle}$. By Kummer theory we can write $M_i=K[\sqrt{d_i}]$ for some
  $d_i\in K^\times\backslash(K^\times)^2$. Condition
  \eqref{eq:descent_rho_condition} implies that the subextension of $L$
  corresponding to $(1\,2)(3\,4)$ is $K (\zeta_3)$. This means that
  \begin{equation} \label{eq:descent_biquadratic1}
    d_1d_2 \equiv -3 \pmod{(K^\times)^2}.
  \end{equation}

  We now calculate the descent obstruction $c (\rho)$. While this can be done
  explicitly, we instead use Lemma \ref{lem:descent_obstruction} and calculate
  $\rho^\ast s_4$ instead. It follows from the proof of \cite[Lemma 2,
  p.~661]{SerreWitt} that
  \begin{equation} \label{eq:descent_biquadratic2}
    \rho^\ast s_4 = (d_1,d_2) \in \Br(K).
  \end{equation}
  Namely, the displayed formula right above (18) in \loccit~ states that
  $\rho^\ast s_4=\rho_1^\ast s_2+\rho_2^\ast s_2+(d_1, d_2)$. Here
  $\rho_i:\Gamma\to S_2$ is the homomorphism corresponding to $M_i/K$ and
  $s_2\in H^2(\Gamma, \{\pm 1\})$ is defined in \cite[\S 1.5]{SerreWitt}. In
  the middle of p.~654 in \loccit~it is shown that $s_2=0.$ This implies
  \eqref{eq:descent_biquadratic2}.

  Therefore Proposition \ref{prop:descent_rho} states that $\rho$ is induced by
  a divisor  $D\in \mathcal{S}$ if and only if the quadratic form
  \begin{equation}
       d_1 x^2 + d_2 y^2 - z^2 = 0
  \end{equation}
  has a nontrivial zero in $K$.
\end{example}

Once again let $D\in \mathcal{S}$. After replacing $D$ by an equivalent one, we
may write as $D = (g)_0$ for a special polynomial $g$. The group $H_D :=
\Aut_{\Kbar}(D)\subset\PGL_2(\Kbar)$ is conjugate to $\overline{G} =
\Aut_{\Kbar}(D_0)$ in $\PGL_2(\Kbar)$. The stabilizer in $\Aut_{\Kbar}(D)$ of
each root $\alpha$ of $g$ is a cyclic group of order $3$. Denote by
$\beta=\beta(\alpha)\in\PP^1_{\Kbar}$ the unique point different from $\alpha$
with the same stabilizer. We obtain a divisor $D'$ as the sum of the $\beta$
for $\alpha$ running through the roots of $g$. If no $\beta$ is equal to
$\infty$ then
\begin{equation}
  D'=(g')_0, \qquad \text{with } g'=\prod_\beta(y-\beta) \in K[y]
\end{equation}
Otherwise, $D'=(g')_0+\infty$, for a unique monic cubic polynomial $g'\in
K[y]$. In either case, we call the polynomial $g'$ the \emph{shadow} of $g$.


\begin{lemma}\label{lem:descent_shadow}
  Let $g\in K[x]$ be a special polynomial with shadow $g'$, and let
  $\rho:\Gamma\to S_4$ (resp.\ $\rho'$) be the homomorphism corresponding to
  $D=(g)_0$ (resp.\ $D'=(g')_0$). Then $\rho'$ can be obtained by composing
  $\rho$ with an inner automorphism of $S_4$ given by conjugation with an odd
  element of $S_4$.

  In particular, given an extension $M$ of $K$, there are at most 2
  nonequivalent special polynomials over $K$ whose splitting field equals $M$.
\end{lemma}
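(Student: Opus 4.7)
The plan is to carefully track two labelings of the divisor $D' = (g')_0$: the one induced by the shadow from $D = (g)_0$, and the one induced by a matrix $C' \in \PGL_2(\Kbar)$ with $D' = C'(D_0)$ used to define $\rho'$. First I would observe that the shadow commutes with the $\PGL_2(\Kbar)$-action: since $D = C(D_0)$ for some $C \in \PGL_2(\Kbar)$, the order-$3$ stabilizer of $C(\alpha_i)$ in $\Aut_{\Kbar}(D)$ is the $C$-conjugate of the corresponding stabilizer in $\overline{G}$, so its second fixed point is $C(\beta_i)$, giving $D' = C(D_0')$. Here $\alpha_1,\ldots,\alpha_4$ are as in \eqref{eq:D_0_numbering}, $D_0' = \{0, 2, 2\zeta_3, 2\zeta_3^2\}$, and $\beta_i$ is the second fixed point of the order-$3$ subgroup of $\overline{G}$ fixing $\alpha_i$.

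To compute $\rho'$, I would write $C' = CT$, so that $T := C^{-1}C' \in \PGL_2(\Kbar)$ satisfies $T(D_0) = D_0'$. The geometric key is that $D_0$ and $D_0'$ are the two length-$4$ orbits of $\overline{G} \cong A_4$ on $\PP^1_{\Kbar}$ (the vertices and face-centers of a regular tetrahedron), and the normalizer $N_{\PGL_2(\Kbar)}(\overline{G}) = S_4$ acts on $\{D_0, D_0'\}$ with $A_4$ trivially and $S_4 \setminus A_4$ by interchange; hence $T$ exists in $S_4 \setminus A_4$. The central claim is that the permutation $\pi \in \Sym(\{1,\ldots,4\})$ defined by $T(\alpha_i) = \beta_{\pi(i)}$ is odd. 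I would prove this by showing that the shadow bijection $\phi_0 : D_0 \to D_0'$, $\alpha_i \mapsto \beta_i$, is not realized by any Möbius transformation: $\phi_0$ is $\overline{G}$-equivariant by construction, so a Möbius realization would centralize $\overline{G}$, but the $\PGL_2(\Kbar)$-centralizer of $\overline{G}$ is trivial, since an element commuting with $\overline{G}$ must fix every point of $D_0$ and hence be the identity. If $\pi$ were even, then $T \circ \pi$ would be a Möbius realization of $\phi_0$, a contradiction.

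With $\pi$ odd, the two labelings of $D'$ are related by $C'(\alpha_i) = C(\beta_{\pi(i)})$. Since the shadow is intrinsic to $D$ and hence $\Gamma$-equivariant, we have $\gamma(C(\beta_i)) = C(\beta_{\rho(\gamma)(i)})$ for $\gamma \in \Gamma$; converting to the $C'$-labeling gives $\rho'(\gamma) = \pi^{-1}\rho(\gamma)\pi$, which establishes the main statement. For the final assertion, equivalence classes of special polynomials with splitting field $M$ correspond bijectively to $A_4$-conjugacy classes of homomorphisms $\rho : \Gamma \to S_4$ with $\ker\rho = \Gal(\Kbar/M)$ and $\sgn \circ \rho = \chi$. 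A case-by-case inspection of the subgroups of $S_4$, together with the induced character $\bar\chi$, shows that all such $\rho$ lie in a single $S_4$-conjugacy class; since $[S_4:A_4] = 2$, this class contains at most two $A_4$-classes, and Part~1 exhibits the shadow operation as the nontrivial swap. The main obstacle is the odd-parity claim for $\pi$, which hinges on the non-Möbius nature of $\phi_0$; once the triviality of the $\PGL_2(\Kbar)$-centralizer of $\overline{G}$ is in hand, the rest of the proof is bookkeeping.
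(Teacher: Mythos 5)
Your overall strategy coincides with the paper's: both arguments rest on the facts that the normalizer of $\overline{G}\cong A_4$ in $\PGL_2(\Kbar)$ is an $S_4$ stabilizing $D_0\cup D_0'$, that its odd elements interchange $D_0$ and $D_0'$, and that consequently the two numberings of $D'$ (the one coming from $C'$ and $D_0$, and the one coming from the shadow of the numbering of $D$) differ by an odd permutation, so that $\rho'=\pi^{-1}\rho\pi$ with $\pi$ odd. The paper compresses all of this into the explicit matrix $B$ and the innerness of $\Aut(S_4)$; you spell out the bookkeeping, including the equivariance $D'=C(D_0')$ and the deduction of the second assertion from the claim that all admissible $\rho$ with fixed kernel form a single $S_4$-conjugacy class (which does survive the routine case check over the subgroups of $S_4$, using that the conjugacy class of the image is pinned down by the isomorphism type of $\Gal(M/K)$ together with $\overline{\chi}$).

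There is one step that is circular as written: your justification that $C_{\PGL_2(\Kbar)}(\overline{G})$ is trivial, namely that ``an element commuting with $\overline{G}$ must fix every point of $D_0$''. An element $Z$ of the centralizer is only forced to stabilize the fixed-point set $\{\alpha_i,\beta_i\}$ of each order-$3$ stabilizer, and the remaining alternative, $Z(\alpha_i)=\beta_i$ for all $i$, is precisely a M\"obius realization of $\phi_0$ --- the very possibility you are trying to exclude. The fact itself is true and the repair is one line: $C_{\PGL_2(\Kbar)}(\overline{G})$ is contained in $N_{\PGL_2(\Kbar)}(\overline{G})\cong S_4$, and the centralizer of $A_4$ in $S_4$ is trivial (intersect the centralizers of two distinct $3$-cycles). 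Alternatively, apply Schur's lemma to the irreducible representation $V_2$ of $G_1\cong\tilde{A}_4$, or note that a nontrivial centralizing involution would generate with $\overline{G}$ a subgroup isomorphic to $A_4\times\ZZ/2\ZZ$, which is not a finite subgroup of $\PGL_2(\Kbar)$; or simply verify the parity on the paper's explicit $B$, which induces the transposition $(3\,4)$. With that line supplied, your proof is complete and matches the paper's in substance.
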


\begin{proof}
  The divisor $D'=(g')_0$ is stabilized by $H_D$ by construction. Let $N_D$ be
  the normalizer of $H_D$ in $\PGL_2 (\Kbar)$. Then $N_D$ is isomorphic to
  $S_4$. Let $B$ be an element of $N_D \setminus H_D$. Then $B$ flips the two
  divisors $D$ and $D'$. In particular, the divisor $D \cup D'$ has
  automorphism group $N_D\cong S_4$ over $\Kbar$.

  Note that it suffices to check the claims above for the divisor $D_0$, where
  \begin{equation}
     D_0' = (y^4-8y)_0 = \{0, 2, 2\zeta_3, 2\zeta_3^2\}
  \quad \text{ and }
      B = \begin{pmatrix} 0 & -2 \\ 1 & 0 \end{pmatrix}.
  \end{equation}

  The statement of the lemma follows from the above, the construction of the
  homomorphism $\rho$ induced by $D$, and the fact that all automorphisms of
  $S_4$ are inner.
\end{proof}

\begin{proposition}\label{prop:pol_list}
  There are exactly $26$ nonequivalent special polynomials over $\QQ$ with good
  reduction away from $2,3$:
  \begin{align}\label{eq:pol_list1}
      x^4 + x, & \quad                                              \\
      x^4 + 2x,                  &\quad x^3 - 2,                       \\
      x^4 + 3x,                  &\quad x^3 - 3,                       \\
      x^4 + 6x,                  &\quad x^3 - 6,                       \\
      x^4 + 12x,                 &\quad x^3 - 12,                      \\
      x^4 - 6x^2 - 3,            &\quad x^4 + 6x^2 - 3,
                                              \label{eq:pol_list4} \\
      x^4 - 12x^2 - 12,          &\quad x^4 + 12x^2 - 12,
                                              \label{eq:pol_list5} \\
      x^4 + 6 x^2 + 8 x - 3,     &\quad x^4 + 4 x^3 - 6 x^2 - 4 x - 7, \\
      x^4 - 24x^2 + 32x - 48,    &\quad x^4 - 4x^3 + 24x^2 - 16x - 32, \\
      x^4 + 12 x^2 - 8 x - 12,   &\quad x^4 - 2 x^3 - 12 x^2 + 4 x - 14, \\
      x^4 - 36 x^2 + 96 x - 108, &\quad x^4 - 8 x^3 + 36 x^2 - 48 x - 12, \\
      x^4 + 12 x^2 + 64 x - 12,  &\quad x^4 + 16 x^3 - 12 x^2 - 32 x - 140, \\
      x^4 + 12 x^2 - 16 x - 12,  &\quad x^4 - 4 x^3 - 12 x^2 + 8 x - 20,
                                              \label{eq:pol_list2}  \\
      x^4 - 12x^2 + 32x - 12.
                                              \label{eq:pol_list3}
  \end{align}
  The polynomial on the right is equivalent to the shadow of the polynomial on the left.
\end{proposition}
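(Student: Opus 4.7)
The proof combines Proposition \ref{prop:descent_rho}, Lemma \ref{lem:descent_obstruction}, and Proposition \ref{prop:special_reduction} to reduce the classification to an enumeration problem in Galois cohomology. By Proposition \ref{prop:special_reduction}, we need precisely the equivalence classes of special polynomials $g$ over $\QQ$ whose splitting field is unramified outside $\{2,3\}$. Letting $L/\QQ$ be the maximal extension of $\QQ$ unramified outside $\{2,3\}$, the combination of Proposition \ref{prop:descent_rho} and Lemma \ref{lem:descent_obstruction} identifies such classes with the set of $A_4$-conjugacy classes of homomorphisms $\rho : \Gamma_\QQ \to S_4$ satisfying
\begin{enumerate}[(i)]
\item $\sgn \circ \rho = \chi$, where $\chi$ is the quadratic character cutting out $\QQ(\zeta_3)$;
\item $\rho$ factors through $\Gal(L/\QQ)$;
\item the obstruction class $\rho^\ast s_4 \in \Br(\QQ)[2]$ is trivial.
\end{enumerate}

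The enumeration then proceeds by the isomorphism type of the image $H := \rho(\Gamma_\QQ) \leq S_4$. Since $\sgn \circ \rho$ is nontrivial, the subgroup $H$ must contain an odd permutation, leaving (up to $S_4$-conjugacy) the possibilities
\begin{equation*}
  H \in \{\, C_2,\; V_4,\; C_4,\; S_3,\; D_4,\; S_4 \,\},
\end{equation*}
where $V_4 = \langle (1\,2),(3\,4)\rangle$ is the non-normal biquadratic four-group, $C_4$ is generated by a four-cycle, $S_3$ is a point stabilizer, and $D_4$ is a Sylow $2$-subgroup. For each such $H$, the surjections $\Gamma_\QQ \twoheadrightarrow H$ satisfying (i) and (ii) are in bijection with Galois $H$-extensions of $\QQ$ unramified outside $\{2,3\}$ whose associated sign character equals $\chi$; by Hermite's theorem each such list is finite, and the relevant fields may be extracted directly from the Jones--Roberts database \cite{DatabaseNF}.

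For each candidate $\rho$, the obstruction $\rho^\ast s_4$ is evaluated via the explicit $2$-cocycle on $S_4$ representing $s_4$ from \cite[\S 1.5]{SerreWitt}. In the biquadratic case this is Example \ref{exa:biquadratic}, which gives $\rho^\ast s_4 = (d_1, d_2)$ where $\QQ(\sqrt{d_1})$ and $\QQ(\sqrt{d_2})$ are the two noncentral quadratic subfields cut out by $\rho$; for $H \in \{C_4, S_3, D_4, S_4\}$ the analogous formulas from \cite[\S 1--\S 2]{SerreWitt} express $\rho^\ast s_4$ as a sum of Hilbert symbols determined by the quadratic and cubic resolvents of a defining polynomial. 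In either situation the obstruction is checked by a finite computation with Hilbert symbols at the places $\{2, 3, \infty\}$, and those $\rho$ with nontrivial obstruction are discarded.

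Finally, the shadow pairing is read off from Lemma \ref{lem:descent_shadow}: the class $[\rho]$ is fixed by the shadow involution precisely when the centralizer $C_{S_4}(H)$ is not contained in $A_4$, which a short case check shows to hold for $H \in \{C_2, V_4, C_4\}$ and fail for $H \in \{S_3, D_4, S_4\}$. The self-paired classes provide the two singleton entries in the table, namely $x^4 + x$ from the unique $C_2$-class given by $\rho = \chi$, and $x^4 - 12x^2 + 32x - 12$, whose splitting field is the biquadratic $\QQ(\sqrt{-2},\sqrt{6})$, from the unique $V_4$-class surviving the obstruction test (no $C_4$-class survives, since there is no cyclic quartic character of $\Gamma_\QQ$ unramified outside $\{2,3\}$ squaring to $\chi$). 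The cases $H \in \{S_3, D_4, S_4\}$ contribute the remaining twelve pairs through the shadow involution. For each admissible $[\rho]$ an explicit special polynomial is then produced by resolving the cocycle $A_\gamma = C^{-1}\gamma(C)$ via Hilbert 90 as in the proof of Theorem \ref{thm:special_sweq}, applying the resulting $C \in \PGL_2(\Kbar)$ to the standard divisor $D_0$, and recognizing the resulting monic quartic (or cubic, when $\infty$ appears) in the table. The main obstacle will be the careful enumeration of $D_4$- and $S_4$-extensions of $\QQ$ ramified only at $\{2,3\}$ together with the corresponding Hilbert symbol computations, since these involve Galois extensions of degree up to $24$ with nontrivial wild ramification at $2$.
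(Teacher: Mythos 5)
Your proposal is correct and follows essentially the same route as the paper: both reduce the classification to enumerating $A_4$-conjugacy classes of homomorphisms $\rho:\Gamma\to S_4$ satisfying \eqref{eq:descent_rho_condition} via Proposition \ref{prop:descent_rho}, extract the candidate fields from the Jones--Roberts database, discard classes with nontrivial obstruction $\rho^\ast s_4$, and account for the shadow pairing via Lemma \ref{lem:descent_shadow}. The only substantive execution differences are that you derive the self-paired classes a priori from the criterion $C_{S_4}(H)\not\subseteq A_4$ and propose to evaluate $\rho^\ast s_4$ by Serre's trace-form formulas in every case, whereas the paper verifies self-pairing computationally (finding exactly $x^4+x$ and the biquadratic entry) and, outside the biquadratic case, establishes triviality of the obstruction a posteriori by simply exhibiting the polynomials; both variants are valid and lead to the same count of $26$.
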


\begin{proof}
  We apply Proposition \ref{prop:descent_rho} to the case where $K=\QQ$ and
  $L/\QQ$ is the maximal algebraic extension unramified outside $2,3$.  We
  start by listing the corresponding classes of homomorphisms $\rho:\Gamma\to
  S_4$ satisfying \eqref{eq:descent_rho_condition} up to conjugation by $S_4$.
  Such a homomorphism corresponds to a tuple $(M_1,\ldots,M_r)$ of finite
  extensions $M_i/\QQ$ that are unramified away from $2,3$ and such that
  $\sum_i [M_i:\QQ]=4$.

  Equation \eqref{eq:chi} implies that Condition
  \eqref{eq:descent_rho_condition} is equivalent to
  \begin{equation} \label{eq:descent_rho_condition2}
    \prod_i d(M_i) \equiv -3 \pmod{(\QQ^\times)^2} ,
  \end{equation}
  where $d (M)$ denotes the discriminant of a field $M$. In this setup the
  Galois group of the composite extension $M:=M_1\cdots M_r/\QQ$ is identified
  with a subgroup of $S_4$. Condition \eqref{eq:descent_rho_condition2} implies
  that the field of invariants of $M$ under $\Gal(M/\QQ)\cap A_4$ is
  $\QQ(\zeta_3)$.

  Of course, we may ignore the occurrences of $M_i=\QQ$. Ordering the subfields
  $M_i$ by their degrees, we either obtain one field $M/\QQ$ of degree $2,3,4$,
  or a pair $(M_1,M_2)$ of quadratic number fields.

  A consultation of the \emph{Database of Number Fields}, \cite{DatabaseNF},
  shows that there are precisely $11$ possibilities in the former case of a
  single number field $M$. More precisely, there is exactly one quadratic
  extension $M/\QQ$ unramified outside $2,3$ such that $d(M)\equiv -3$, namely
  \begin{equation}\label{eq:field1}
      M = K[\zeta_3] \quad \text{(with generating polynomial $x^2+x+1$).}
  \end{equation}
  There are precisely $4$ cubic number fields with this property, given by the
  generating polynomials
  \begin{equation}
     x^3-2, \quad x^3-3, \quad x^3 - 6, \quad x^3 - 12,
  \end{equation}
  and finally there are $8$ quartic fields, described by
  \begin{equation}\label{eq:field3}
    \begin{split}
      x^4 - 6 x^2 - 3, \quad x^4 - 12 x^2 - 12, \quad x^4 + 6 x^2 + 8 x - 3, \quad x^4 - 24 x^2 + 32 x - 48, \\
      x^4 + 12 x^2 - 8 x - 12, \quad x^4 - 36 x^2 + 96 x - 108, \quad x^4 + 12 x^2 + 64 x - 12, \quad x^4 + 12 x^2 - 16 x - 12 .
    \end{split}
  \end{equation}

  Lemma \ref{lem:descent_shadow} implies that each of these options corresponds
  to at most two special polynomials defining elements of $\mathcal{S}$. If
  there are two, then one is the shadow of the other and conversely. A simple
  search yields one pair $(g,g')$ for each of the possibilities in
  \eqref{eq:field1}--\eqref{eq:field3}.  These polynomials are listed as
  \eqref{eq:pol_list1}--\eqref{eq:pol_list2} above. As can be verified using
  the methods in \cite{LRS-ANTS}, the polynomial in \eqref{eq:pol_list1} is the
  only polynomial from this list that is $\PGL_2(K)$-equivalent to its own
  shadow.  It follows from Proposition \ref{prop:descent_rho} and Lemma
  \ref{lem:descent_shadow} that every element of the set $\mathcal{S}$ that is
  either irreducible or has at least one rational point is
  $\PGL_2(K)$-equivalent to a set of roots of a polynomial in this list.

  In the latter case of a pair of quadratic number fields $(M_1, M_2)$, the
  possibilities are
  \begin{equation}\label{eq:biquadratic_fields}
    (x^2+1, x^2-3), \quad (x^2-2, x^2+6), \quad (x^2+2, x^2-6).
  \end{equation}
  In the notation of Example \ref{exa:biquadratic} these pairs correspond to
  the Hilbert symbols
  \begin{equation}
      (d_1,d_2) \in \{ (-1, 3),\; (2, -6),\; (-2, 6)\}.
  \end{equation}
  A simple calculation with Hilbert symbols shows that the elements $(-1,3),(2,
  -6)\in\Br(\QQ)$ are nontrivial, whereas $(-2, 6) = 0$ in $\Br(\QQ)$. We
  conclude that only the pair $(-2, 6)$ can occur. Again a search yields the
  polynomial $g$ in line \eqref{eq:pol_list3} with splitting field
  $\QQ(\sqrt{-2},\sqrt{6})$. The shadow polynomial $g'$ is equivalent to $g$,
  and is therefore not listed.  Now our list is complete.
\end{proof}

\begin{theorem} \label{thm:special_2_3}
  Up to isomorphism, there are 800 nonisomorphic special Picard curves over
  $\QQ$ with good reduction outside $p=2,3$.
\end{theorem}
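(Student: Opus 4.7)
The plan is to combine Theorem \ref{thm:special_sweq}, Proposition \ref{prop:special_reduction}, Remark \ref{rem:special_isos}, and Proposition \ref{prop:pol_list}. Theorem \ref{thm:special_sweq} gives that any special Picard curve over $\QQ$ has an equation $x^4 = a g(y)$ with $g$ a special polynomial, and Proposition \ref{prop:special_reduction} translates good reduction outside $\{2,3\}$ into two conditions: the splitting field of $g$ is unramified outside $\{2,3\}$, and $v_p(a)\equiv 0\pmod 4$ for every $p\notin\{2,3\}$. After absorbing fourth powers of $a$ into a rescaling of $x$, the admissible values of $a$ lie in the group $T:=\langle -1,2,3\rangle/(\QQ^\times)^4$, which has order $2\cdot 4\cdot 4=32$. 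Proposition \ref{prop:pol_list} provides the finite list of the $26$ eligible $g$ up to $\PGL_2(\QQ)$-equivalence.

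Next I would count, for each such equivalence class (represented by a polynomial $g$ with divisor $D=(g)_0$), the number of $\QQ$-isomorphism classes of curves $x^4=ag(y)$ with $a\in T$. By Remark \ref{rem:special_isos}, two such curves with the same $g$ are $\QQ$-isomorphic if and only if $a/a'\equiv \lambda_A\pmod{(\QQ^\times)^4}$ for some $A\in\Stab_{\PGL_2(\QQ)}(D)$, where $\lambda_A\in \QQ^\times$ is defined by $g(Ay)(\gamma y+\delta)^4=\lambda_A\cdot g(y)$. Curves defined by $\PGL_2(\QQ)$-inequivalent polynomials are never $\QQ$-isomorphic, since such an isomorphism would induce an equivalence of branch divisors. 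Writing $\Lambda_D\subset T$ for the image of the map $A\mapsto \lambda_A$, the number of isomorphism classes with branch divisor in the orbit of $D$ and good reduction outside $\{2,3\}$ equals $|T|/|\Lambda_D|$.

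The main computational step is determining $\Stab_{\PGL_2(\QQ)}(D)$ and $\Lambda_D$ for each of the $26$ polynomials. The $\QQ$-rational stabilizer is the subgroup of $A_4=\Aut_{\Kbar}(D)$ commuting with the Galois action $\rho:\Gamma\to S_4$ on the four roots, which is easily read off from the $\Gamma$-orbit decomposition of $D$; for instance, the cubics $x^3-d$ with $d\in\{2,3,6,12\}$ have divisor equal to one rational point together with a single conjugate triple, so no nontrivial element of $A_4$ is $\Gamma$-equivariant and $|\Lambda_D|=1$. Even when $\Stab_{\PGL_2(\QQ)}(D)$ is nontrivial, one often has $\lambda_A=1$ in $T$: this is the case for the even quartics such as $y^4-6y^2-3$, where $y\mapsto -y$ lies in the stabilizer and gives $\lambda=1$. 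I would verify case-by-case that for each of the $24$ non-self-shadow polynomials in the list, $\Lambda_D=\{1\}$, contributing $32$ twists each. The main obstacle is the analysis of the two self-shadow polynomials $y^4+y$ and $y^4-12y^2+32y-12$: for $g=y^4+y$ one checks directly that $A(y)=(y+1)/(2y-1)\in \Stab_{\PGL_2(\QQ)}(D)$ satisfies $g(Ay)(2y-1)^4=9\,g(y)$, so $\lambda_A=9$ is nontrivial in $T$ (not a fourth power) and $|\Lambda_D|=2$; an analogous calculation disposes of the biquadratic case.

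Summing the contributions over all $26$ classes gives $24\cdot 32+2\cdot 16=768+32=800$, as claimed.
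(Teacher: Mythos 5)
Your proposal is correct and follows essentially the same route as the paper: reduce to the $26$ special polynomials of Proposition \ref{prop:pol_list} with the $32$ admissible values of $a$ modulo fourth powers, then quotient by the scaling factors coming from rational automorphisms of the branch divisor, identifying the two self-shadow polynomials as the only ones whose stabilizer scales $g$ by $9$ modulo fourth powers and hence contributes $16$ rather than $32$ twists. Your explicit verification that $(y+1)/(2y-1)$ stabilizes the roots of $y^4+y$ with scaling factor $9$ is a nice concrete substitute for the paper's appeal to the algorithms of Lercier--Ritzenthaler--Sijsling, but the count $24\cdot 32+2\cdot 16=800$ is obtained identically.
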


\begin{proof}
  Let $Y$ be a special Picard curve. The discussion above shows that $Y$ admits
  an equation
  \begin{equation}
    Y :\; x^4 = a g (y),
  \end{equation}
  where $g$ is one of the polynomials in Proposition \ref{prop:pol_list} and
  where $a = \pm 2^\mu 3^\nu$, with $0\leq \mu,\nu\leq 3$. By the same
  proposition, the polynomial $g$ is uniquely determined by $Y$.

  It remains to see, given such a special polynomial $g$, when $Y$ is
  equivalent to another curve of the form $Y' : x^4 = a' g (y)$ for $a = \pm
  2^{\mu'} 3^{\nu'}$. For this to happen with $\mu \neq \mu'$ or $\nu \neq
  \nu'$, the class of $g$ needs to admit nontrivial $K$-automorphisms, as
  described in Remark \ref{rem:special_isos}. Using \cite{LRS-ANTS}, there
  turns out to be a single nontrivial automorphism for the polynomials
  \eqref{eq:pol_list1}, \eqref{eq:pol_list4}, \eqref{eq:pol_list5}, and
  \eqref{eq:pol_list3}. In the cases \eqref{eq:pol_list4} and
  \eqref{eq:pol_list5}, this automorphism scales the polynomial $g$ itself by a
  $4$th power, so that we still cannot have $Y \cong Y'$ unless $a = a'$.
  However, in the cases \eqref{eq:pol_list1} and \eqref{eq:pol_list3}, the
  nontrivial automorphism multiplies $g$ by a factor in $9 (\QQ^\times)^4$.
  Therefore $Y$ and $Y'$ are isomorphic if and only if the quotient of $a$ and
  $a'$ is in either $(\QQ^\times)^4$ or $9 (\QQ^\times)^4$.

  This means that in $24$ of the $26$ cases of Proposition \ref{prop:pol_list}
  we get $32$ distinct curves for the values $a = \pm 2^\mu 3^\nu$, whereas in
  $2$ cases, we obtain only $16$ distinct curves. All in all we obtain $24
  \cdot 32 + 2 \cdot 16 = 800$ twists.
\end{proof}

\subsection{Discriminant minimization}

In this section we briefly mention discriminant minimization of equations of
special Picard curves over a field $K$, as a variant of the considerations in
Section \ref{sec:mindisc}.

\begin{definition}
  Let $Y$ be a special Picard curve over $K$. We call an equation for $Y$ of
  the form
  \begin{equation}\label{eq:specminshort}
    Y :\; b x^4 = c_0 y^4 + c_1 y^3 + c_2 y^2 + c_3 y + c_4
  \end{equation}
  a \emph{short Weierstrass equation} for $Y$.
\end{definition}

\begin{remark}
  The discriminant of the binary form corresponding $F = b x^4 - (c_0 y^4 + c_1
  y^3 + c_2 y^2 + c_3 y + c_4)$ corresponding to equation
  \eqref{eq:specminshort} is
  \begin{equation}
    \Delta (F) = -2^{16} b^9 \Delta (f),
  \end{equation}
  where $\Delta (f)$ is the discriminant of the univariate polynomial $f = c_0
  y^4 + c_1 y^3 + c_2 y^2 + c_3 y + c_4$.
\end{remark}

By Theorem \ref{thm:special_sweq} every special Picard curve over $K$ admits a
short Weierstrass equation. Conversely, by part (b) of the same theorem, a
nonsingular curve over $K$ with defining equation \eqref{eq:specminshort} is a
special Picard curve if and only if the binary quartic invariant
\begin{equation}\label{eq:invI}
  I = 12 c_0 c_4 - 3 c_1 c_3 + c_2^2
\end{equation}
vanishes \cite{cremona-fisher}.

\begin{remark}\label{rem:specialiso}
  A given plane quartic curve $Y$ over $K$ is a special Picard curve if and
  only if its Dixmier--Ohno invariants coincide with those of the standard
  special Picard curve \eqref{eq:special}. This yields an effective algorithm
  to decide whether $Y$ is special. To find an equation of the form
  \eqref{eq:specminshort} for $Y$, one calculates the automorphism group of $Y$
  and splits the ambient projective plane, or more canonically $\PP^2 H^0 (Y,
  \Omega_Y)$,  into the eigenspaces \eqref{eq:decomp} to obtain coordinates $x$
  and $y,z$.

  As mentioned in Remark \ref{rem:special_isos}, given two equations of special
  Picard curves $Y_i : x^4 = g_i (y)$ over $K$, finding isomorphisms between
  $Y_1$ and $Y_2$ reduces to determining equivalences of binary forms (up to
  scalars) between $g_1$ and $g_2$. For this question, too, effective
  algorithms exist \cite{LRS-ANTS}.
\end{remark}

To obtain long Weierstrass equations, we have to work slightly harder this time
around. First we define the appropriate notion.

\begin{lemma}\label{lem:specminlong}
  Let $Y$ be a plane curve over $K$ defined by a nonsingular equation of the
  form
  \begin{equation}\label{eq:speclongweq}
    Y :\; a_0 x^4 + a_1 (z) x^3 + a_2 (z) x^2 + a_3 (z) x = a_4 (y, z) ,
  \end{equation}
  where $a_i$ is a homogeneous form of degree $i$ in $y$ and $z$. Suppose that
  $Y$ is obtained from an equation of the form \eqref{eq:specminshort}. Then
  the equations
  \begin{align}
    8 a_0 a_2 & = 3 a_1^2 , \label{eq:specrel1} \\
    16 a_0^2 a_3 & = a_1^3 \label{eq:specrel2}
  \end{align}
  are satisfied.
\end{lemma}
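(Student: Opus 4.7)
The plan is to make explicit the shape of the coordinate change that produces a long Weierstrass equation from a short one, and then to read off the two identities as elementary binomial computations, just as in the proof of Lemma \ref{lem:minlong} in the nonspecial case.

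First I would identify the admissible substitutions. The defining feature of \eqref{eq:speclongweq} is that the right-hand side $a_4(y,z)$ involves only $y$ and $z$, while every occurrence of $x$ sits on the left in the prescribed shape $a_0 x^4 + a_1 x^3 + a_2 x^2 + a_3 x$. A projective transformation applied to \eqref{eq:specminshort} that mixed $x$ into the substitution rule for $y$ or $z$ would produce cross-terms such as $x^2 y^2$ on the right that cannot be moved to the left without violating this shape. Hence the substitution is necessarily of the form
\begin{equation*}
  x \;\longmapsto\; \lambda x + \mu(y,z), \qquad \mu \text{ linear in } y,z,
\end{equation*}
possibly composed with a $\GL_2$ change of variables on the pair $(y,z)$ and an overall rescaling of the defining form. (Intrinsically, the line $\{x=0\}$ is the image of the $1$-dimensional eigenspace $V_1$ in the decomposition \eqref{eq:decomp} for the action of $\tau$, and must therefore be preserved, forcing the transformation matrix to be block-upper-triangular.)

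Next I would expand by the binomial theorem:
\begin{equation*}
  b(\lambda x + \mu)^4 = b\lambda^4 x^4 + 4 b\lambda^3 \mu\, x^3 + 6 b\lambda^2\mu^2 x^2 + 4 b\lambda\mu^3 x + b\mu^4 .
\end{equation*}
Moving $b\mu^4$ to the right-hand side gives an equation of the form \eqref{eq:speclongweq} with
\begin{equation*}
  a_0 = b\lambda^4, \quad a_1 = 4 b\lambda^3\mu, \quad a_2 = 6 b\lambda^2\mu^2, \quad a_3 = 4 b\lambda\mu^3.
\end{equation*}
The relations \eqref{eq:specrel1} and \eqref{eq:specrel2} then drop out from the numerical identities $8\cdot 6 = 3\cdot 16$ and $16\cdot 4 = 4^3$:
\begin{equation*}
  8 a_0 a_2 = 48 b^2 \lambda^6 \mu^2 = 3 a_1^2, \qquad 16 a_0^2 a_3 = 64 b^3 \lambda^9 \mu^3 = a_1^3.
\end{equation*}
A $\GL_2$ change of variables on $(y,z)$ only replaces the linear form $\mu$ by another linear form, and an overall rescaling scales all $a_i$ uniformly, so both identities are preserved under the remaining freedom.

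The only real subtlety is the first step, restricting the admissible substitutions. Once this is handled — either by the intrinsic eigenspace argument sketched above or by a direct monomial-comparison argument inspecting which entries of a general matrix can be nonzero without producing forbidden cross-terms — the rest of the proof is a one-line binomial verification. Conceptually the relations \eqref{eq:specrel1}–\eqref{eq:specrel2} are precisely the conditions that allow one to complete the fourth power in $x$, inverting the substitution $x \mapsto \lambda x + \mu$ to recover a short Weierstrass equation; this interpretation is what should be used in the converse direction in any subsequent result.
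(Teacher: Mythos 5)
Your proposal is correct and is precisely the ``direct calculation'' that the paper's one-line proof alludes to: writing the substitution as $x \mapsto \lambda x + \mu(y,z)$, expanding $b(\lambda x+\mu)^4$, and reading off $a_0=b\lambda^4$, $a_1=4b\lambda^3\mu$, $a_2=6b\lambda^2\mu^2$, $a_3=4b\lambda\mu^3$, from which \eqref{eq:specrel1} and \eqref{eq:specrel2} follow; your checks that a $\GL_2$-change in $(y,z)$ and an overall rescaling preserve both identities are also right. The only addition beyond the paper is your (correct) justification that the admissible substitutions are block-upper-triangular, which the paper implicitly builds into the phrase ``obtained from'' and into the shape of the matrix $U$ constructed in the proof of Proposition \ref{prop:specminlong}.
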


\begin{proof}
  This follows from a direct calculation.
\end{proof}

\begin{definition}
  Given a special Picard curve $Y$ over $K$, we call an equation for $Y$ of the
  form \eqref{eq:speclongweq} a \emph{long Weierstrass equation} for $Y$.
\end{definition}

\begin{proposition}\label{prop:specminlong}
  Let $Y$ be a special Picard curve over a discretely valued field $(K, v)$.
  Then $Y$ admits an integral long Weierstrass equation of minimal discriminant
  exponent.
\end{proposition}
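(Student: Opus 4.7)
The plan is to follow the template of Proposition~\ref{prop:minlong}. We first establish an analogue of Lemma~\ref{lem:minlong} for special curves: any integral ternary quartic $F \in \OO_K[x, y, z]$ defining a plane model of $Y$ can be brought into long Weierstrass form by an integral change of coordinates $T \in \GL_3(\OO_K)$. Granting this, the proposition follows by applying such a $T$ to an integral equation of minimal discriminant exponent, since Proposition~\ref{prop:discprop}(d) guarantees that integral transformations preserve the valuation of the discriminant.

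For the existence of $T$, we use the $K$-rational structure provided by Theorem~\ref{thm:special_sweq}: the degree-$4$ Galois cover $\psi : Y \to \PP^1$ is defined over $K$, with center of projection a $K$-rational point $P \in \PP^2(K)$, namely $(1:0:0)$ in the coordinates of any short Weierstrass equation. The proof of Lemma~\ref{lem:normalize}(a) applies verbatim to any $K$-rational point of $\PP^2$, not just points lying on the curve, and therefore produces $T \in \GL_3(\OO_K)$ with $T \cdot P = (1:0:0)$.

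The main obstacle is to verify that the resulting equation is automatically in long Weierstrass form. For this, note that the generator $\tau$ of the distinguished order-$4$ subgroup of $\Aut_{\Kbar}(Y_{\Kbar})$ extends, via the canonical embedding, to a linear automorphism of $\PP^2_{\Kbar}$ of order $4$ that fixes $(1:0:0)$ and preserves every line through it; any such automorphism has the form $(x:y:z) \mapsto (\zeta_4 x + \ell(y, z) : y : z)$ for some linear form $\ell \in \Kbar[y, z]$. Writing the transformed equation as $a_0 x^4 + a_1 x^3 + a_2 x^2 + a_3 x + a_4$ with $a_i$ homogeneous of degree $i$ in $y, z$, imposing $\tau$-invariance up to scalar and comparing coefficients in successive powers of $x$ reproduces precisely the identities \eqref{eq:specrel1} and \eqref{eq:specrel2}. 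Since both sides of these relations have coefficients in $K$, their validity over $K(\zeta_4)$---which follows from $\tau$-invariance---suffices.
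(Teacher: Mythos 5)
Your proof is correct in substance but takes a genuinely different route from the paper's. The paper starts from a short Weierstrass form $F$ over $K$ together with a matrix $T_0 \in M_3(\OO_K)$ carrying it, up to a scalar, to a minimal integral form $F_0$, and then massages $T_0$ by integral column operations: first, using that $\OO_K$ is a PID, so that the image of $Ky \oplus Kz$ meets $\OO_K^3$ in a direct summand (making $T_0T_1$ block upper triangular), and then an elementary operation killing the $(1,2)$-entry so that $x$ pulls back to $\lambda x + az$. You instead normalize the minimal form directly: move the $K$-rational center of projection $P$ (rational because $V_1(s)\subset H^0(Y,\Omega_Y)$ is a $K$-line, by the proof of Theorem \ref{thm:special_sweq}) to $(1:0:0)$ via the argument of Lemma \ref{lem:normalize}(a), and deduce the long Weierstrass shape from the $\tau$-symmetry rather than by tracking the substitution. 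The symmetry argument does check out: writing $\tau$ as $(x:y:z)\mapsto(\zeta_4 x+\ell(y,z):y:z)$ and comparing coefficients of $x^4,x^3,x^2,x$ in $\tau^*F=cF$ gives $c=1$ (as $a_0=F(1,0,0)\neq 0$) and then $a_1=c_1a_0\ell$, $a_2=c_2a_0\ell^2$, $a_3=c_3a_0\ell^3$ with constants satisfying \eqref{eq:specrel1}--\eqref{eq:specrel2}. What your version buys is a more conceptual proof that avoids the scalar bookkeeping around $F\cdot T_0=F_0$; what it leaves out is the exact normal form: your $a_1,a_2,a_3$ are forms in $y$ \emph{and} $z$, whereas \eqref{eq:speclongweq} has them in $z$ alone, which is precisely what the paper's Claim 2 arranges. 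To match this you need one further integral step, namely a $\GL_2(\OO_K)$-change of $(y,z)$ sending a primitive integral multiple of $\ell$ (which lies in $K[y,z]$ since $a_1/a_0$ does) to $z$. Since the relations and their use in Theorem \ref{thm:specminshort} are unaffected, this omission is cosmetic rather than a gap.
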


\begin{proof}
  Consider an integral short Weierstrass equation \eqref{eq:specminshort} for
  $Y$ in $\PP^2 (x : y : z)$. Let $F = b x^4 - c_0 y^4 - c_1 y^3 z - c_2 y^2
  z^2 - c_3 y z^3 - c_4 z^4$ the corresponding ternary form. We use the basis
  $x, y, z$ to identify $K^3$ with $K x \oplus K y \oplus K z$. Let $F_0 \in
  \OO_K [x,y,z]$ be an integral form of minimal discriminant defining a curve
  that is $K$-isomorphic to $Y$, and let $T_0 \in M_3 (\OO_K)$ be such that up
  to a scalar $F \cdot T_0 = F_0$ under the right action of $M_3 (\OO_K)$.

  \textbf{Claim 1:} There exists a matrix $T_1 \in \GL_3 (\OO_K)$ such that
  $T_0 T_1$ maps the subspace $K y \oplus K z$ to itself.

  \emph{Proof:} Consider the intersection $M = (V_2 \cdot T_0) \cap \OO_K^3$.
  Then $M$ is a torsion-free $\OO_K$-submodule of $\OO_K^3$ of rank $2$.
  Because $\OO_K$ is a principal ideal domain, we can find a complementary
  submodule of $M$ inside $\OO_K^3$. On the level of matrices, this comes down
  to saying that there exists $U_1 \in \GL_3 (\OO_K)$ whose second and third
  rows generate $M$. We can then take $T_1 = U_1^{-1}$ to prove the claim.

  The matrix $T_0 T_1$ now has the form
  \begin{equation}\label{eq:specspec}
    T_0 T_1 =
    \begin{pmatrix}
      * & b & a \\
      0 & * & * \\
      0 & * & * \\
    \end{pmatrix}
  \end{equation}

  \textbf{Claim 2:} There exists a matrix $T_2 \in \GL_3 (\OO_K)$ such that
  $T_0 T_1 T_2$ is of the form \eqref{eq:specspec} with $b = 0$.

  \emph{Proof:} If $b = 0$, we are done, and if $a = 0$, then we can take $T_2$
  to be the matrix corresponding to the transformation sending $(x, y, z)$ to
  $(x, z, y)$. For the same reason, we may otherwise suppose that $v (b) > v
  (a)$. But in that case we can take $T_2$ to correspond to $(x, y, z) \mapsto
  (x, y, (-b/a) y + z)$. The claim is proved.

  To conclude, let $U = T_0 T_1 T_2$. Since $T_1$ and $T_2$ are in $\GL_3
  (\OO_K)$, the matrix $U$ still has the property that a scalar multiple $G$ of
  $F \cdot U$ has minimal discriminant. Because of the form of $U$, the ternary
  quartic $G$ yields an equation \eqref{eq:speclongweq}.
\end{proof}

\begin{remark}
  As in Remark \ref{rem:global}, the same considerations apply globally over a
  number field with trivial class group: the step in Claim 2 can then be
  replaced by repeated division with remainder.
\end{remark}

\begin{theorem}\label{thm:specminshort}
  Let $Y$ be special Picard curve over a discretely valued field $(K, v)$ whose
  residue characteristic does not equal $2$. Then $Y$ admits an integral short
  Weierstrass equation \eqref{eq:specminshort} of minimal discriminant
  exponent.
\end{theorem}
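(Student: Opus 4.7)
The plan is to mirror the proof of Theorem \ref{thm:minshort}. By Proposition \ref{prop:specminlong}, I would start from an integral long Weierstrass equation
\begin{equation*}
  F :\; a_0 x^4 + a_1 z x^3 + a_2 z^2 x^2 + a_3 z^3 x - a_4(y,z) = 0
\end{equation*}
of minimal discriminant exponent at $v$, with scalars $a_0, a_1, a_2, a_3 \in \OO_K$ and $a_4 \in \OO_K[y,z]$ an integral binary quartic, subject to the identities $8 a_0 a_2 = 3 a_1^2$ and $16 a_0^2 a_3 = a_1^3$ from Lemma \ref{lem:specminlong}. Since the residue characteristic is not $2$, the integers $2, 4, 8, 16, 256$ are units in $\OO_K$. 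I would then split the argument according to $v(a_0)$.

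For $v(a_0) \leq 2$, my plan is to apply the Tschirnhausen substitution $x \mapsto x - (a_1/(4 a_0)) z$, which lies in $\GL_3(\OO_K)$ as soon as $v(a_1) \geq v(a_0)$. The latter holds trivially when $v(a_0) = 0$; for $v(a_0) = 1$ it follows from the quadratic identity (giving $2 v(a_1) \geq 1$); and for $v(a_0) = 2$ it follows from the cubic identity (giving $3 v(a_1) \geq 4$). A direct computation exploiting both identities — essentially reversing the calculation in Lemma \ref{lem:specminlong} — shows that this substitution kills the $x^3 z$, $x^2 z^2$, and $x z^3$ coefficients, leaving a $z^4$ correction of $-a_1^4/(256 a_0^3)$. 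That correction is integral because $4 v(a_1) \geq 4 v(a_0) \geq 3 v(a_0)$. The outcome is an integral short Weierstrass equation \eqref{eq:specminshort}, whose discriminant exponent agrees with that of $F$ by Proposition \ref{prop:discprop}.(d).

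If instead $v(a_0) \geq 3$, I would derive a contradiction with the minimality of $F$ by rescaling $T:\,y \mapsto \pi y$, $z \mapsto \pi z$. The cubic identity forces $v(a_1) \geq \lceil 2 v(a_0)/3\rceil \geq 2$, and then $v(a_2) = 2 v(a_1) - v(a_0) \geq 1$ via the quadratic identity. Inspecting the coefficients of $F \circ T$ — namely $a_0$, $\pi a_1$, $\pi^2 a_2$, $\pi^3 a_3$, and $\pi^4 a_4(y,z)$ — shows that its Gauss valuation is at least $3$, so I may write $F \circ T = \pi^3 G$ with $G$ integral. Combining parts (c) and (d) of Proposition \ref{prop:discprop} with $v(\det T) = 2$ then gives
\begin{equation*}
  v(\Delta(G)) = v(\Delta(F)) + 2 \cdot 36 - 3 \cdot 27 = v(\Delta(F)) - 9,
\end{equation*}
violating the minimality of $F$.

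I expect the main obstacle to lie precisely at the boundary $v(a_0) = 2$. In the nonspecial setting of Theorem \ref{thm:minshort}, only the quadratic relation $a_1^2 = 3 a_0 a_2$ is available, so Tschirnhausen works only up to $v(a_0) = 1$ and the scaling contradiction begins at $v(a_0) = 2$. Here the additional cubic relation stretches the Tschirnhausen range to $v(a_0) = 2$ and simultaneously supplies the bounds $v(a_1) \geq 2$, $v(a_2) \geq 1$ needed to push the scaling contradiction up to the threshold $v(a_0) \geq 3$. Once these two bookkeeping statements about the two relations are in place, every remaining step is a routine verification.
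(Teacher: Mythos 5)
Your proposal follows the paper's proof almost verbatim: start from the minimal integral long Weierstrass equation of Proposition \ref{prop:specminlong}, use the Tschirnhausen shift $x \mapsto x - (a_1/(4a_0))z$ when $v(a_0) \le 2$, and derive a contradiction by scaling when $v(a_0) \ge 3$, with the cubic relation \eqref{eq:specrel2} supplying the extra room compared to Theorem \ref{thm:minshort}. The one slip is your justification at $v(a_0)=1$: the quadratic identity $8a_0a_2 = 3a_1^2$ gives $2v(a_1) = v(a_0) + v(a_2) - v(3)$, which only yields $v(a_1)\ge 1$ when $v(3)=0$; since the theorem allows residue characteristic $3$, you should instead invoke \eqref{eq:specrel2}, which gives $3v(a_1) \ge 2v(a_0)$ unconditionally (this is exactly what the paper does for both $v(a_0)=1$ and $v(a_0)=2$, and what you yourself do at $v(a_0)=2$). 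With that substitution of justification, every step checks out, including the $z^4$ correction term $-a_1^4/(256a_0^3)$ and the count $2\cdot 36 - 3\cdot 27 = -9$ in the scaling contradiction.
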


\begin{proof}
  Extend $v$ to the Gauss valuation for ternary polynomials, and consider an
  integral equation \eqref{eq:speclongweq} of $Y$ whose discriminant is
  minimal. First suppose that $v (a_0) = 0$. Then the integral coordinate
  change $y \mapsto y - (a_1 / (4 a_0))$ gives an equation of the form
  \eqref{eq:minshort}, and we are done. If $v (a_0) = 1$, then $v (a_1) \geq 1$
  by \eqref{eq:specrel2}, and the same argument applies. If $v (a_0) = 2$, then
  similarly $v (a_1) \ge 2$ by \eqref{eq:specrel2}.

  Finally, suppose that $v (a_0) \geq 3$. Then $v (a_1) \ge 2$ by
  \eqref{eq:specrel2}, which also yields $v (a_0) \le (3/2) v (a_1)$. If $v
  (a_2) = 0$, then \eqref{eq:specrel1} yields  $v (a_0) \ge 2 v (a_1)$, a
  contradiction since $v (a_1) > 0$. This implies $v (a_2) \ge 1$. Knowing
  this, we can apply the same argument as at the end of the proof of Theorem
  \ref{thm:minshort}.
\end{proof}

\begin{example}\label{exa:Deltamin2}
  Let $Y_0$ be the standard special Picard curve defined by \eqref{eq:special}.
  This model has discriminant $2^{16} 3^9$ and is minimal except for $p=2$. A
  long Weierstrass equation \eqref{eq:speclongweq} which is minimal for all $p$
  is
  \begin{equation}
       2 x^4 - 4 x^3 + 3 x^2 - x = y^3,
  \end{equation}
  which one also recognizes as a short Weierstrass equation of the previously
  considered form \eqref{eq:minshort}. It has discriminant $2^7 3^9$.
\end{example}

\section{Comparing the conductor and the discriminant}
\label{sec:conductor}

In this section we strengthen some results from \cite{Picard1} on the exponent
of $p$ in the conductor of a Picard curve. Our main result Theorem
\ref{thm:mincond} states that the standard special Picard curve $Y_{0,\QQ}$ has
the smallest conductor among all special Picard curves defined over $\QQ$.  Our
method is based on the results of \cite{superell} on computing the conductor of
a superelliptic curve via stable reduction. The results of this section
illustrate that this method allows us to analyze the effect of twisting the
curve on the conductor. In Section \ref{sec:upperbound} we discuss the question
of comparing the conductor with the minimal discriminant of a superelliptic
curve.

\subsection{Calculating the conductor  via stable reduction}
\label{sec:conductor1}

We recall from \cite[Section 2]{superell} some facts on the conductor of a
curve over a number field and its relation to  stable reduction.  More details
and references to the literature can be found there.

Let $K$ be a number field and $Y/K$  a Picard curve. The \emph{conductor} of
$Y/K$ is an ideal
\begin{equation}
  \mathfrak{c}=\prod_{\p} \p^{f_\p},
\end{equation}
where the product runs over the prime ideals of $\mathcal{O}_K$.  The
\emph{conductor exponent} $f_{\p}$ is trivial if $Y$ has good reduction at
$\p$.

Denote by $K_{\p}^{\rm nr}$ the maximal unramified extension of the completion
of $K$ at $\p$.  The conductor exponent $f_{\p}$ measures the ramification of
the representation of $I_\p:=\Gal(\overline{K}_{\p}/K_{\p}^{\rm nr})$ acting on
the \'etale cohomology group $H^1_{\rm et}(Y_{\Kbar}, \QQ_\ell)$ for some
auxiliary prime $\ell$ different from the characteristic $p$ of $k=\OO_K/\p$.
In the rest of this section, we consider $Y$ as a curve over $K_{\p}^{\rm nr}$.
We drop $\p$ from the notation and write $K$ instead of $K_{\p}^{\rm nr}$.

Let $L/K$ be a Galois extension such that $Y_L$ has stable reduction over $L$
and let $\mathcal{Y}$ be the stable model of $Y_L$ over $\Spec(\OO_L)$. Define
$\Gamma:=\Gal(L/K)$.  For $u\geq 0$ we denote by $\Gamma^u$ the higher
ramification groups in the upper numbering. Since we assume that $k$ is
algebraically closed, we have that $\Gamma=\Gamma^0$, and the reduction
$\overline{Y}$ is a stable curve over $k$ on which the arithmetic Galois group
$\Gamma$ acts $k$-linearly.

The quotient curve $\overline{Y}^u := \overline{Y}/\Gamma^u$ is again
semistable.  The following proposition is \cite[Prop.~2.3]{Picard1} and follows
from \cite[Theorem 2.9]{superell} and \cite[Corollary 2.14]{ICERM}.

\begin{proposition}\label{prop:fp}
  The conductor exponent of the curve $Y/K$ is given by
  \begin{equation} \label{eq:fp1}
    f_\p = \epsilon + \delta,
  \end{equation}
  where
  \begin{equation} \label{eq:fp2}
    \epsilon := 6 - \dim \, H^1_{\rm et}(\Yb^0,\QQ_\ell)
  \end{equation}
  and
  \begin{equation} \label{eq:fp3}
    \delta := \int_{0}^\infty \left(6-2g(\Yb^u)\right) {\rm d}u.
  \end{equation}
  In particular, $\delta=0$ if and only if $Y$ acquires stable reduction over a
  tamely ramified extension.
\end{proposition}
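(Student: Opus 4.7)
The plan is to assemble the formula from the Ogg--Serre decomposition of the Artin conductor of an $\ell$-adic representation into tame and wild parts, and then identify each piece using the stable model $\mathcal{Y}/\OO_L$. Set $V := H^1_{\rm et}(Y_{\Kbar},\QQ_\ell)$, a $6$-dimensional $\QQ_\ell$-vector space on which inertia $I_\p = \Gamma$ acts through the action of $\Gamma = \Gal(L/K)$ on $\overline{Y}$. By definition, $f_\p = \dim V - \dim V^{I_\p} + \mathrm{Sw}(V)$, where $\mathrm{Sw}$ is the Swan conductor.

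First I would identify the tame part $\epsilon$. Because $Y_L$ has stable reduction over $L$, the proper-smooth base change theorem furnishes a $\Gamma$-equivariant isomorphism $V \simeq H^1_{\rm et}(\overline{Y}, \QQ_\ell)$. Taking invariants under $\Gamma^0 = \Gamma$ and descending to the semistable quotient $\overline{Y}^0 = \overline{Y}/\Gamma$ via Hochschild--Serre yields $V^{I_\p} = H^1_{\rm et}(\overline{Y}^0, \QQ_\ell)$; substituting $\dim V = 6$ gives \eqref{eq:fp2}.

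For the wild part, I would start with the standard formula
\begin{equation*}
\mathrm{Sw}(V) = \int_0^\infty \bigl(\dim V - \dim V^{\Gamma^u}\bigr)\,du
\end{equation*}
for the Swan conductor in terms of upper-numbering higher ramification groups. Applying the quotient-cohomology identification in each degree $u$, namely $V^{\Gamma^u} = H^1_{\rm et}(\overline{Y}^u,\QQ_\ell)$ together with $\dim H^1_{\rm et}(\overline{Y}^u,\QQ_\ell) = 2g(\overline{Y}^u)$ (where $g$ denotes the arithmetic genus of the semistable curve $\overline{Y}^u$), produces \eqref{eq:fp3}. The final clause of the proposition is then immediate from \eqref{eq:fp3}: tame ramification means $\Gamma^u = 1$ for all $u > 0$, so $\overline{Y}^u = \overline{Y}$ has genus $3$ and the integrand vanishes identically, while conversely $\delta = 0$ forces $2g(\overline{Y}^u) = 6$ throughout, which forces $\Gamma^u$ to act trivially on the stable reduction and hence to be trivial for $u > 0$.

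The principal obstacle is establishing, uniformly in $u$, the two identifications $V^{\Gamma^u} = H^1_{\rm et}(\overline{Y}^u, \QQ_\ell)$ and $\dim H^1_{\rm et}(\overline{Y}^u, \QQ_\ell) = 2g(\overline{Y}^u)$ for the possibly singular semistable curves $\overline{Y}^u$ appearing in the filtration, without assuming that $|\Gamma^u|$ is coprime to $\ell$. Rather than redo this from scratch, I would simply invoke \cite[Theorem 2.9]{superell} and \cite[Corollary 2.14]{ICERM}, which package precisely these two inputs in the generality needed; Proposition \ref{prop:fp} is then obtained by specializing to Picard curves and substituting $2g(Y) = 6$.
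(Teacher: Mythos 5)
Your overall architecture is the paper's: the paper gives no independent argument for this proposition, stating only that it is \cite[Prop.~2.3]{Picard1} and follows from \cite[Theorem 2.9]{superell} and \cite[Corollary 2.14]{ICERM}, and you likewise split the Artin conductor into its tame and wild parts and then defer the key identifications to those same references. So as a matter of strategy there is nothing to object to.

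However, the intermediate statements you use to motivate the deferral are wrong whenever the stable reduction $\Yb$ is not of compact type, i.e.\ whenever its dual graph contains a loop --- and this case genuinely occurs for Picard curves (cases (d) and (e) in Remark \ref{rem:fp_bound}). Smooth proper base change does not apply, because the stable model is not smooth; what is true is that the specialization map identifies $H^1_{\rm et}(\Yb,\QQ_\ell)$ with $V^{I_L}$, a subspace of dimension $6-b_1$ where $b_1$ is the number of loops, while $I_L$ acts on all of $V$ by nontrivial unipotent monodromy. The correct chain for the tame part is $V^{I_\p}=(V^{I_L})^{\Gamma}=H^1_{\rm et}(\Yb,\QQ_\ell)^{\Gamma}=H^1_{\rm et}(\Yb^0,\QQ_\ell)$, so your conclusion survives but not your route to it. Likewise, for a semistable curve one has $\dim H^1_{\rm et}(\Yb^u,\QQ_\ell)=2g(\Yb^u)-b_1(\Yb^u)$, not $2g(\Yb^u)$ --- the paper says exactly this in the paragraph following the proposition --- and correspondingly $V^{\Gamma^u}\neq H^1_{\rm et}(\Yb^u,\QQ_\ell)$ for $u>0$: the invariants also contain the $\Gamma^u$-fixed part of the $b_1$-dimensional ``loop'' quotient $V/V^{I_L}$, and it is the sum of the two contributions that has dimension $2g(\Yb^u)$. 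Your two misstatements cancel, which is why you land on the right formula, but the ``two inputs'' you ask \cite{superell} and \cite{ICERM} to supply are not statements those references prove (they are false), so the proof as written does not actually reduce to them. The same care is needed in the final clause: $\delta=0$ forces $\dim V^{\Gamma^u}=6$ and hence triviality of $\Gamma^u$ for $u>0$, but via the correct interpretation of $2g(\Yb^u)$ as $\dim V^{\Gamma^u}$ rather than as $\dim H^1_{\rm et}(\Yb^u,\QQ_\ell)$.
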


We say that the stable reduction is \emph{of compact type} if its dual graph is
a tree. This is equivalent to the Jacobian of $Y$ having potentially good
reduction. In the case that $\Yb^u$ is a semistable curve of compact type, we
have that $\dim\, H^1_{\rm et}(\Yb^u,\QQ_\ell)=2g(\Yb^u)$, and $g(\Yb^u)$ is
the sum of the genera of the irreducible components of the normalization of
$\Yb^u$. In the general case, one needs to add the number of loops of the dual
graph of $\Yb^u$. We refer to \cite[Section 2.2]{Picard1} for a precise
formula.

\begin{remark}\label{rem:fp_bound}
  Let $Y/\QQ_3^\nr$ be a Picard curve given as superelliptic curve of exponent
  $3$ \eqref{eq:shortweq}.  In \cite[Theorem 3.2]{Picard1} the possibilities
  for the stable reduction are determined and \cite[Theorem 3.6]{Picard1}
  yields a lower bound on $f_3$ for each of the possibilities. The statement is
  \begin{enumerate}[(i)]
    \item $f_3\geq 6$ if $Y$ has potentially good reduction (Case (a)),
    \item $f_3\geq 4$ if $Y$ does not have potentially good reduction, and
      $\Yb$ is of compact type (Cases (b) and (c)),
    \item $f_3\geq 5$ if the stable reduction of $\Yb$ has loops
      (Cases (d) and (e)).
  \end{enumerate}
  Here the cases are as in \cite[Theorem 3.2]{Picard1}.  All lower bounds are
  attained. One may check that these lower bounds also apply for special Picard
  curves defined by a superelliptic equation of exponent $3$
  \eqref{eq:shortweq} over $\QQ_3^\nr$.



  In \cite[Theorem 3.6.(a)]{Picard1} it is mistakenly claimed that if $f_3\leq
  6$ then $Y$ achieves stable reduction over a tamely ramified extension. A
  counterexample is given by the curve
  \begin{equation}\label{eq:f3=5}
    Y:\; y^3 = x^4 + x^3 + 3^3x^2 +3^5x.
  \end{equation}
  For this curve we find $\epsilon=4$ and $\delta=1$, and hence $f_3=4+1=5$.
  The stable reduction of $Y$ is in Case (b) of \cite[Theorem 3.2]{Picard1}.

  This example illustrates that the statement of \cite[Theorem
  3.6.(c)]{Picard1} needs to be modified, as well. The correct statement is
  that if $\delta=5$ then the stable reduction at $p=3$ contains loops (Cases
  (d) or (e)). If $f_5=5$ then $(\epsilon, \delta)\in \{(5,0), (4,1)\}$, and
  both possibilities  occur.
\end{remark}

In the rest of this section we consider special Picard curves.  We start by
treating the case of residue characteristic $p=3$.

\begin{proposition}\label{prop:f3special}
  Let $Y/K:=\QQ_3^\nr$ be a special Picard curve given by an equation
  $x^4=ag(y)$ as in \eqref{eq:special2}. We have that $\epsilon\in \{4,6\}$ and
  $f_3\geq 4$.
\end{proposition}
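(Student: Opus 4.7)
The plan is to invoke Proposition \ref{prop:fp} and reduce the bound on $\epsilon$ to a representation-theoretic computation on the smooth reduction. The discussion preceding Proposition \ref{prop:specialp=3} (via \cite[Section 5.1.3]{MichelDiss}) guarantees that every special Picard curve has potentially good reduction at $p=3$, so there is a finite Galois extension $L/K$ with $\Gamma = \Gal(L/K)$ over which the stable model $\mathcal{Y}/\OO_L$ has smooth special fiber $\overline{Y}$ of genus $3$. Since the residue field of $K$ is algebraically closed, $\Gamma$ acts $k$-linearly on $\overline{Y}$, the quotient $\overline{Y}^0 = \overline{Y}/\Gamma$ is again a smooth curve, and $H^1_{\mathrm{et}}(\overline{Y}^0, \QQ_\ell) \cong H^1_{\mathrm{et}}(\overline{Y}, \QQ_\ell)^{\Gamma}$ by Leray and semisimplicity of $\QQ_\ell[\Gamma]$.

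The next step is to argue that the $\Gamma$-action on $H^1_{\mathrm{et}}(\overline{Y}, \QQ_\ell)$ factors through the geometric automorphism group $G = \Aut_{\Kbar}(Y_0)$, so that it may be studied as a sub-representation of the explicit $G$-module $V \oplus V^*$, where $V = V_1 \oplus V_2$ is the decomposition \eqref{eq:decomp}. By smooth base change $H^1_{\mathrm{et}}(\overline{Y}, \QQ_\ell) \cong H^1_{\mathrm{et}}(Y_{\Kbar}, \QQ_\ell)$ as $\Gamma$-modules, and by the descent formalism of Section \ref{sec:special_descent} this Galois action is given by the twisting cocycle in $Z^1(\Gamma, G)$, whose standard part becomes trivial after reduction. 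Let $H \subset G$ denote the image of the resulting homomorphism $\Gamma \to G$; the task is to compute $\dim (V \oplus V^*)^H$.

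Using the formulas \eqref{eq:sigma_tau_rho} one computes that $G$ acts on the line $V_1$ by a character $\chi_1$ with $\chi_1(\sigma) = \zeta_3$, $\chi_1(\tau) = -1$, $\chi_1(\rho) = -1$, and on the faithful two-dimensional representation $V_2$ by $\tau \mapsto \zeta_4 \cdot I$, $\sigma \mapsto \mathrm{diag}(\zeta_3^2, \zeta_3)$, and each noncentral involution by an element with eigenvalues $\pm 1$. A case analysis on subgroups $H \subset G$, organized by the extension $1 \to \langle \tau \rangle \to G \to A_4 \to 1$, then shows that $\dim (V \oplus V^*)^H \in \{0, 2, 6\}$: the value $6$ occurs only for $H = \{1\}$; the value $2$ occurs precisely when $H$ is either a nontrivial subgroup of $\ker(\chi_1)$ (which turns out to be a quaternion group $Q_8$) or is cyclic generated by a noncentral involution; and the value $0$ arises in all remaining cases. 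Since $v_K(3) = 1$ is odd, Proposition \ref{prop:specialp=3}(a) excludes the case $H = \{1\}$, which yields $\epsilon \in \{4, 6\}$ and hence $f_3 = \epsilon + \delta \geq 4$. The main obstacle is this last case analysis, where the key point is that every Klein four-subgroup of $G$ is forced to contain $\tau^2$ together with a noncentral involution on which $\chi_1 = -1$, so its invariant subspaces on $V_1 \oplus V_1^*$ (coming from $\tau^2$) and on $V_2 \oplus V_2^*$ (coming from the noncentral involution) are transverse, giving invariants of dimension $0$ rather than $1$ or $2$.
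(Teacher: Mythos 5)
Your overall strategy --- reducing $\epsilon = 6 - \dim H^1_{\mathrm{et}}(\Yb^0,\QQ_\ell)$ to a computation of invariants of the image $H$ of $\Gamma$ in $\Aut_k(\Yb)$, and showing that no nontrivial $H$ can give quotient genus $2$ --- is exactly what the paper does at $p\geq 5$ in Proposition \ref{prop:f2special}.(b), and your subgroup analysis of $G$ is correct as a statement about $G$ (the key point being that a nontrivial $H\leq\ker\chi_1\cong Q_8$ contains $\tau^2=-I$, which kills $V_2$, so $\dim V^H\leq 1$ for every nontrivial $H\leq G$). The genuine gap is the step where you assert that the $\Gamma$-action on $H^1_{\mathrm{et}}(\Yb,\QQ_\ell)$ is ``given by the twisting cocycle in $Z^1(\Gamma,G)$, whose standard part becomes trivial after reduction'', so that $H\subset G$. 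At $p=3$ this fails: the standard curve $Y_{0,K}$ itself has bad reduction over $K=\QQ_3^{\nr}$ (Proposition \ref{prop:specialp=3}.(a)), so the standard part $s_0$ of the action is already nontrivial on the stable reduction, and the total action does not factor through $G$. Concretely, since $\zeta_3\notin\QQ_3^{\nr}$, Lemma \ref{lem:specialp=3} produces a $\gamma\in\Gamma$ acting as an \emph{odd} permutation on $\overline{D}$, whereas every element of $\overline{G}\cong A_4$ acts evenly; hence the image of $\Gamma$ in $\Aut_k(\Yb)$ is \emph{never} contained in the specialization of $G$. This is possible because $\Yb$ in characteristic $3$ is a Fermat-type quartic whose automorphism group is much larger than $G$ (compare Example \ref{exa:specialp=3}, where the Galois element realizing $\epsilon=4$ acts as a $4$-cycle on $\overline{D}$ --- a permutation no element of $G$ can induce). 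Your case analysis over subgroups of $G$ therefore does not cover the actual Galois image, and in particular does not by itself rule out an automorphism of $\Yb$ outside $G$ whose cyclic quotient has genus $2$.

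The paper closes exactly this hole differently: it fixes an element $\gamma$ with $\gamma(\zeta_3)=\zeta_3^2$, uses Lemma \ref{lem:specialp=3} to constrain its action on $\overline{D}$ to be an odd permutation, and then checks directly on the characteristic-$3$ reduction that every automorphism of $\Yb$ inducing an odd permutation of $\overline{D}$ satisfies $g(\Yb/\langle\gamma\rangle)\leq 1$, with equality only when $\gamma$ acts as a fixed-point-free $4$-cycle whose square fixes exactly $4$ points. Since $g(\Yb^0)\leq g(\Yb/\langle\gamma\rangle)$, this yields $\epsilon\in\{4,6\}$ without needing to determine the full image $H$. To repair your argument you would have to carry out the invariant computation inside $\Aut_k(\Yb)$ in characteristic $3$ rather than inside $G$, using the odd element supplied by Lemma \ref{lem:specialp=3} as the essential constraint; the exclusion of $\epsilon=0$ via bad reduction is sound, but the exclusion of $\epsilon=2$ cannot be obtained from the representation theory of $G$ alone.
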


\begin{proof}
  Let $L/K$ be a Galois extension such that $Y$ admits a stable model over
  $\OO_L$. Assume that $\zeta_3\in L$. Let $\mathcal{Y}\to \Spec(\OO_L)$ be the
  stable model of $Y_L$ and $\overline{Y}$ its special fiber. As in Section
  \ref{sec:red_special}, we write $\mathcal{W}=\mathcal{Y}/\langle\tau\rangle$
  and $\overline{W}$ for the special fiber of $\mathcal{W}$. Since $Y$ has
  potentially good reduction by \cite[Section 5.1.3]{MichelDiss},
  $\overline{Y}$ and $\overline{W}$ are smooth curves of genus $3$ and $0$,
  respectively.

  We write $\Gamma=\Gal(L/K)$ and $\Yb^0=\Yb/\Gamma$.  Proposition
  \ref{prop:fp} implies that $f_3\geq \epsilon\geq 6-2g(\Yb^0)$. We claim that
  $g(\Yb^0)\leq 1$. This implies that $\epsilon\in \{4,6\}$.

  Since $\zeta_3\notin K$ Lemma \ref{lem:specialp=3} implies that there exists
  an element $\gamma\in \Gamma$ that acts on $\overline{D}$ as an odd
  permutation.  One calculates the genus of the quotient
  $\Yb/\langle\gamma\rangle$ for all possibilities for the action of $\gamma$
  on $\overline{Y}$. Since $\gamma$ acts as an element of $\Aut_k(\Yb)$ this
  may be done on the reduction $\overline{Y}_0$ of the standard special Picard
  curve $Y_0$ \eqref{eq:special}. It follows that the only possibility for
  $g(\Yb/\langle \gamma\rangle)$ to be positive is that $\gamma$ acts as a
  $4$-cycle on $\overline{D}$ and fixed-point free on $\Yb$, but $\gamma^2$
  fixes exactly $4$ points. In this case $g(\Yb/\langle \gamma\rangle)=1$ and
  hence $g(\Yb^0)\leq 1$. In all other cases, we have that $g(\Yb^0)=0$. The
  statement of the proposition follows.
\end{proof}

The following example shows that the lower bound in Proposition
\ref{prop:f3special} is sharp.

\begin{example}\label{exa:specialp=3}
  We consider the special Picard curve over $\QQ$ defined by
  \begin{equation}
    Y_a:\;x^4=a g(y) \text{ with }g(y)=y^4+6by^2+3cy-3b^2 \text{ such that
    $3\nmid b$ and  $a\neq 0$}.
  \end{equation}
  It is no restriction to assume that $0\leq i:=v_3(a)\leq 3$.  Let
  $K=\QQ_3^{\nr}$ and $L=K[\pi]$ with $\pi^4=3$. Substituting $x=\pi^{i+1} x_1$
  and $y=\pi y_1$ and dividing both sides of the equation by $\pi^4$ yields a
  smooth $\OO_L$-model of $Y_L$. Its special fiber is a smooth projective curve
  of genus $3$ with affine equation
  \begin{equation}
    \overline{Y}:\; x_1^4=\overline{a}(y_1^4-\overline{b}^2),
  \end{equation}
  where $\overline{b}$ is the reduction of $b$ and $\overline{a}$ the reduction
  of $a/3^i$ modulo $\pi$.

  The arithmetic Galois group $\Gal(L/K)$ is generated by
  $\gamma(\pi)=\zeta_4\pi$. The $k$-linear automorphism on $\overline{Y}$
  induced by $\gamma$ satisfies: $(x_1, y_1)\mapsto (\zeta_4^{3-i} x_1,
  \zeta_4^3 y_1)$ and acts as a $4$-cycle on the reduction $\overline{D}$ of
  the branch locus of $\psi$.

  One computes that $g(\Yb/\Gamma)=1$ if and only if $v_p(a)\in \{1, 2\}$ and
  $g(\Yb/\Gamma)=0$ otherwise.  Since $L/K$ is tamely ramified, $f_3=\epsilon$.
  We conclude that $Y_a$ has conductor exponent $f_3=4$ if and only if
  $v_p(a)\in \{1, 2\}$. In the case that $v_p(a)\in \{0, 3\}$ one finds
  similarly that $f_3=6$.
\end{example}

The next proposition treats the case of residue characteristic $\neq 3$. The
corresponding result for Picard curves given by a superelliptic equation of
exponent $3$ \eqref{eq:shortweq} is \cite[Theorem 4.4]{Picard1}. In general it
is not true that $f_p=0$ if and only if $Y$ has good reduction to
characteristic $p$: in the case that the curve $Y$ has bad reduction but its
Jacobian variety has good reduction to characteristic $p$ we also have that
$f_p=0$.  An example is given in \cite[Example 5.5]{Picard1}. However, this
does not occur for special Picard curves.

\begin{proposition}\label{prop:f2special}
  Let $Y/K=\QQ_p^{\nr}$ be a special Picard curve.
  \begin{enumerate}[(a)]
    \item If $p=2$ then $f_2\geq 6$.
    \item If $p\geq 5$ then $f_p\in \{0, 4, 6\}$. Moreover, $f_p=0$
      if and only if $Y$ has good reduction at $p$.
  \end{enumerate}
\end{proposition}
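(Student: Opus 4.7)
The plan is to apply Proposition \ref{prop:fp} in each case: compute $f_p = \epsilon + \delta$ from the stable reduction $\Yb$ of $Y_L$ and the action on $\Yb$ of $\Gamma = \Gal(L/K)$, where $L/K$ is a minimal Galois extension of semistable reduction.

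For part (b), $p \geq 5$ is coprime to $|G| = 48$. Since special Picard curves have potentially good reduction (Section \ref{sec:red_special}), $\Yb$ is smooth of genus $3$ and isomorphic over $\kbar$ to the reduction of $Y_0$; its automorphism group over $\kbar$ is again $G$ (by \cite[Theorem 3.1]{param}, with the exceptional case in characteristic $7$ handled via Dixmier--Ohno invariants as in Lemma \ref{lem:special}). Hence $\Gamma$ acts on $\Yb$ through a subgroup of $G$ of order coprime to $p$, so the wild inertia is trivial, $L/K$ may be chosen tame, $\delta = 0$, and $f_p = \epsilon = 6 - 2g(\Yb/\Gamma)$. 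Since $K$ has algebraically closed residue field and $L/K$ is tame, $\Gamma$ is cyclic and acts through some cyclic $H \subseteq G$. The main step is to verify by Riemann--Hurwitz that $g(Y_0/H) \in \{0, 1, 3\}$ for every cyclic $H$. Using the generators \eqref{eq:sigma_tau_rho}: the central involution $\tau^2$ has $4$ fixed points and gives genus $1$; any order-$2$ lift of a $(2,2)$-cycle, such as $\rho$, similarly has $4$ fixed points and gives genus $1$; $\sigma$ has $5$ fixed points and gives genus $0$; $\tau$ is totally ramified over its $4$ fixed points and gives genus $0$; an order-$4$ lift such as $\tau\rho$ has no fixed points on $Y_0$, but $(\tau\rho)^2 = \tau^2$ accounts for $4$ points of ramification index $2$, giving genus $1$; and for $|H| \in \{6, 12\}$ the stabilizers of the fixed points of the various powers can be enumerated as above, giving genus $0$. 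This rules out $g(Y_0/H) = 2$, so $\epsilon \in \{0, 4, 6\}$. The equivalence with good reduction then follows from N\'eron--Ogg--Shafarevich applied to the Jacobian of $Y$: $f_p = 0$ iff the Jacobian has good reduction, and since $\Yb$ is smooth and of compact type, this is equivalent to good reduction of $Y$.

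For part (a), $p = 2$, Proposition \ref{prop:specialp=3}(b) forces $\Yb$ to be singular over every extension $L/K$. The plan is to invoke the explicit stable reduction of $Y_0$ at $p = 2$ worked out in \cite[Section 5.1.3]{MichelDiss} (see also \cite[Example 5.6]{Picard1}), extract the numerical data for \eqref{eq:fp2} and \eqref{eq:fp3}, and verify $\epsilon + \delta \geq 6$ for $Y_0$; equality is realized by the model in Example \ref{exa:Deltamin2}. For an arbitrary twist, the stable model coincides with that of $Y_0$ after base extension to a sufficiently large field, and only the $\Gamma$-action is modified via the twist cocycle. One then checks that this modification preserves the lower bound: by the $G$-equivariance of the MichelDiss configuration, the invariant cohomology $H^1(\Yb, \QQ_\ell)^\Gamma$ cannot become large enough to yield $\epsilon + \delta < 6$.

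The main obstacle is part (a): transferring the MichelDiss stable-reduction description to arbitrary twists, and proving that no twist can push the conductor exponent below $6$. In part (b), the technically subtlest point is the Riemann--Hurwitz analysis of the order-$4$ lifts of $(2,2)$-cycles, whose action on $Y_0$ has no fixed points, so that all ramification must be traced through the central involution $\tau^2$.
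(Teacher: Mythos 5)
Your part (b) is essentially the paper's argument and is correct: potentially good reduction gives a smooth $\Yb$ with $\Aut_k(\Yb)\cong G$ of order $48$ prime to $p$, hence tameness, $\delta=0$, and $f_p=\epsilon=6-2g(\Yb/\Gamma)$; your Riemann--Hurwitz check that $g(Y_0/H)\in\{0,1,3\}$ for cyclic $H$ (including the fixed-point-free order-$4$ lifts of $(2,2)$-cycles, whose ramification is carried entirely by $\tau^2$) matches the paper's computation that $g(\Yb/H)\leq 1$ for all nontrivial $H$, and closing the ``moreover'' via N\'eron--Ogg--Shafarevich on the compact-type smooth reduction is a legitimate, if slightly different, way to finish.

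Part (a), however, is a plan rather than a proof, and the gap sits exactly where the real work is. First, you never produce a lower bound on $\delta$. Since $\epsilon\leq 6$ always, and since a priori $\epsilon$ could even vanish (the paper explicitly warns that $f_p=0$ can occur for curves with bad reduction whose Jacobian has good reduction), the inequality $f_2\geq 6$ cannot be extracted from ``the invariant cohomology cannot become large enough'': one must show that the minimal extension $L/K$ of semistable reduction is forced to be \emph{wildly} ramified and that the higher ramification groups $\Gamma^u$, $u>0$, act nontrivially on the tree of three genus-$1$ components, so as to get $\delta\geq 2$ to add to $\epsilon\geq 4$. The paper does this by a case distinction on whether $g$ has a $K$-rational root: in the rational-root case $Y$ is superelliptic of exponent $3$ with equation $y^3=ax^4+d$, and $L$ must contain $\zeta_4$ and $\sqrt[3]{2}$, whose Galois elements are shown to move the components $\Yb_2,\Yb_3$ (giving $\epsilon\geq4$, $\delta\geq 2$); in the irrational case one tracks the $16$ points above the roots of the shadow polynomial $g'$, whose automorphism group is $\tilde{S}_4$, and uses that elements of $\tilde{A}_4$ fixing no root of $g'$ do not merely permute $\Yb_2$ and $\Yb_3$ (giving $\epsilon\geq 4$, $\delta\geq 4$). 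Second, your statement that for a twist ``only the $\Gamma$-action is modified via the twist cocycle'' obscures that different twists require genuinely different (and differently wildly ramified) splitting data, which is why the paper's two cases have different numerology. Until you identify the forced wild subextensions of $L/K$ for an arbitrary twist and compute their action on the components of $\Yb$, the bound $f_2\geq 6$ is not established.
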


\begin{proof}
  Assume that we are given a defining equation $x^4=ag(y)$ for $Y$ of the form
  \eqref{eq:special2}. Let $g'$ be the shadow polynomial of $g$ as defined in
  Section \ref{sec:classification}.  Let $\alpha$ be a root of $g$. Lemma
  \ref{lem:special_aut}.(c) implies that there is a unique subgroup
  $H_\alpha\subset \Aut_{\Kbar}(Y)$ of order $3$ with $P_\alpha:=(0, \alpha)$
  as fixed point.  We denote by $y=\beta$ the second fixed point of $\sigma$
  considered as automorphism of $Y/\langle\tau\rangle =\PP^1_y$. In other
  words, $\beta$ is the root of the shadow polynomial $g'$ of $g$ corresponding
  to $\alpha$, see Section \ref{sec:classification}. The ramification locus
  $\mathcal{R}_\phi$ of $\phi:Y\to Y/H_\alpha$ is precisely the inverse image
  of $y=\beta$ on $Y$, together with $P_\alpha$. Here $P_\alpha$ is the
  distinguished point described in Remark \ref{rem:distpoint}.

  We apply the strategy of \cite[Sections 4+5]{superell} for computing the
  stable reduction of $Y$.  The stable reduction of the standard special Picard
  curve $Y_{0, K}$ is computed in \cite[Section 5.1.3]{MichelDiss}.  That
  calculation implies that to find all irreducible components of the stable
  reduction $\overline{Y}$ of the twist $Y$ of $Y_{0,K}$ it suffices to
  separate the points of $\mathcal{R}_\phi$.  We refer to \cite[Sections 4.2
  and 4.3]{superell} for an explanation of the procedure of separating the
  points. The structure of the stable reduction $\overline{Y}$ is as follows.
  \begin{equation*}
    \setlength{\unitlength}{0.7mm}
    \begin{picture}(80,50)
      \put(0,10){\line(1,0){60}}
      \put(15,0){\line(0,1){35}}
      \put(30,0){\line(0,1){35}}

      \put(15,20){\circle*{2.2}}
      \put(15,30){\circle*{2.2}}
      \put(30,20){\circle*{2.2}}
      \put(30,30){\circle*{2.2}}
      \put(50,10){\circle*{2.2}}

      \put(65,8){\smaller[1]$\Yb_1$}
      \put(12,41){\smaller[1]$\Yb_2$}
      \put(27,41){\smaller[1]$\Yb_3$}

      \put(6,28.5){\smaller[2]}
      \put(3,19){\smaller[2]}
      \put(35,28.5){\smaller[2]}
      \put(34,19){\smaller[2]}
      \put(48,2){\smaller[2]$\infty$}
    \end{picture}
  \end{equation*}
  Here the dots indicate the specialization of $\mathcal{R}_\phi$. The dot
  marked $\infty$ is the specialization of the distinguished point $P_\alpha$.
  All three irreducible components are smooth curves of genus $1$.

  \bigskip
  We first consider the case that $g$ has a $K$-rational root $\alpha$. In this
  case the subgroup $H_\alpha$, considered as subgroup of $\GL_2(\Kbar)$ as in
  \eqref{eq:Gmatrix}, is $K$-rational.  Arguing as in the proof of Theorem
  \ref{thm:shortweq} we conclude that $Y/K$ admits a defining equation
  \eqref{eq:shortweq} as superelliptic curve of exponent $3$. Since $Y$ is
  special it even follows that $Y$ admits an equation
  \begin{equation}
    y^3=ax^4+d.
  \end{equation}

  Arguing as in \cite[Section 5.1.3]{MichelDiss} we find that any Galois
  extension $L/K$ such that $Y_L$ has semistable reduction contains $\zeta_4$
  and $\sqrt[3]{2}$. Moreover, any element $\gamma\in \Gal(L/K)$ with
  $\gamma(\zeta_4)=-\zeta_4$ acts nontrivially on at least one of the two
  irreducible components $\overline{Y}_2, \overline{Y}_3$. Similarly, any
  element $\gamma\in \Gal(L/K)$ with $\gamma(\sqrt[3]{2})\neq \sqrt[3]{2}$ acts
  nontrivially on either $\overline{Y}_1$, or on $\overline{Y}_2$ and
  $\overline{Y}_3$, or on all three irreducible components. It follows
  therefore from Proposition \ref{prop:fp} that
  \begin{equation}
    \epsilon\geq 4, \qquad  \delta\geq 2
  \end{equation}
  and hence that $f_2=\delta+\epsilon\geq 6$. This proves  statement (a) in
  this case.

  \bigskip
  Assume that none of the roots of $g$ is $\QQ_2^{\nr}$-rational. It follows
  from the definition of $g'$ that none of the roots of $g'$ is
  $\QQ_2^{\nr}$-rational, as well. One computes using the explicit stable model
  of $Y_{0}$ from \cite[Section 5.1.3]{MichelDiss} that all $16$ points whose
  $y$-coordinate is a root of $g'$ specialize to pairwise distinct points of
  the stable reduction of $\overline{Y}$; exactly half specialize to
  $\overline{Y}_2$ and $\overline{Y}_3$, respectively. The automorphism group
  of these $16$ points is exactly the group $\tilde{S}_4$ discussed in Section
  \ref{sec:classification}. The subgroup $\tilde{A}_4$ of index $2$ stabilizes
  each of the two components $\overline{Y}_2$ and $\overline{Y}_3$.

  Let $L/K$ be a Galois extension over which $Y$ admits stable
  reduction. It is no restriction to assume that all $16$ points whose
  $y$-coordinate is a root of $g'$ are defined over $L$.  Let
  $\gamma\in \Gal(L/K)$ be an element that fixes none of the zeros
  $(g')_0$.  It follows from the description of $\tilde{A}_4$ in
  Remark \ref{rem:A4tilde} that the quotient of
  $\overline{Y}_2\cup\overline{Y}_3$ by $\gamma$ has genus $0$.  Here
  we use that $\gamma$ does not lift to an automorphism of order $2$
  and hence does not just permute the two components $\Yb_2$ and
  $\Yb_3$. Proposition \ref{prop:fp} implies in this situation that
  \begin{equation}
    \epsilon\geq 4, \qquad \delta\geq 4.
  \end{equation}
  As in the first case we conclude that $f_2=\delta+\epsilon\geq 4+4\geq 6$ and
  Statement (a) is proved.

  \bigskip We prove (b).  The curve $Y$ has potentially good reduction to
  characteristic $p\geq 5$. It follows that $\Yb^0 = \Yb/\Gamma$ is a smooth
  curve, and hence that $\epsilon = 2g(Y) - 2g(\Yb^0)\leq 6$ is even.  Lemma
  \ref{lem:special_aut} implies that $\Aut_k(\Yb) = \Aut_{\Kbar}(Y) = G_{48}$.
  A calculation using the description of this group in Lemma
  \ref{lem:special_aut}.(a) implies that $g(\Yb/H)\leq 1$ for all nontrivial
  subgroups $H<\Aut_k(\Yb)$. We conclude that $\epsilon\in \{0,4,6\}$.

  It follows from \cite[Cor.~4.6]{superell} that the stable model of $Y$ may be
  defined over a tame extension of $K=\QQ_p^\nr.$ This implies that $\delta=0$
  and hence that $f_p=\epsilon\in \{0,4,6\}$. This finishes the proof.
\end{proof}

The standard special Picard curve $Y_{0, \QQ}$ defined by \eqref{eq:special}
has conductor $N=2^63^6$. This is the smallest value in our database
\cite{bsw-code}. In \cite[Problem 5.7]{Picard1} it was  asked whether there
exists a Picard curve with strictly smaller conductor. As a consequence of the
results of Sections \ref{sec:special} and \ref{sec:conductor1}, we can at least
answer this question for special Picard curves.

\begin{theorem}\label{thm:mincond}
  Let $Y/\QQ$ be a special Picard curve. Then  $N_Y\geq 2^63^6$.
\end{theorem}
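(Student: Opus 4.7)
The plan is to combine the uniform local bounds $f_2 \geq 6$ (Proposition \ref{prop:f2special}(a)) and $f_3 \geq 4$ (Proposition \ref{prop:f3special}) with the classification of special Picard curves over $\QQ$ with good reduction outside $\{2, 3\}$ given by Theorem \ref{thm:special_2_3}. Note that the bound $2^6 3^6$ is attained by $Y_0$, as remarked just before the theorem, so only the lower bound requires proof and equality will be attained.

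\textbf{First}, I would reduce to the case of good reduction outside $\{2, 3\}$. By Proposition \ref{prop:specialp=3}, both $2$ and $3$ are primes of bad reduction for any special Picard curve over $\QQ$. If $Y$ has bad reduction at some additional prime $p \geq 5$, then $f_p \geq 4$ by Proposition \ref{prop:f2special}(b), whence
\[ N_Y \geq 2^6 \cdot 3^4 \cdot 5^4 = 3\,240\,000 > 46\,656 = 2^6 \cdot 3^6, \]
so we may assume $Y$ has good reduction outside $\{2, 3\}$. By Theorem \ref{thm:special_2_3}, $Y$ then belongs to an explicit list of $800$ $\QQ$-isomorphism classes, each represented by an equation $x^4 = a g(y)$ with $g$ in the list of Proposition \ref{prop:pol_list} and $a = \pm 2^\mu 3^\nu$, $0 \leq \mu, \nu \leq 3$.

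\textbf{Second}, I would verify $N_Y \geq 2^6 3^6$ for each of these 800 twists. Since $\epsilon_3 \in \{4, 6\}$ by Proposition \ref{prop:f3special}, the analysis splits on $f_3$. If $f_3 \geq 6$, the bound $N_Y \geq 2^6 3^6$ follows at once from $f_2 \geq 6$. The critical case is $f_3 = 4$, whose occurrence is witnessed by Example \ref{exa:specialp=3}: in that case $2^{f_2} \cdot 3^4 \geq 2^6 \cdot 3^6$ forces $f_2 \geq 6 + 2\log_2 3 > 9$, that is, $f_2 \geq 10$. For each such twist one must therefore carry out the stable reduction analysis at $2$ explicitly: determine a minimal Galois extension $L/\QQ_2^{\rm nr}$ over which $Y_L$ acquires stable reduction, use the techniques of \cite{superell} to compute the induced action of $\Gal(L/\QQ_2^{\rm nr})$ on the stable model (building on the structure used in the proof of Proposition \ref{prop:f2special}(a), where the three-component stable reduction of $Y_0$ is described), and extract $\epsilon_2$ and $\delta_2$ from Proposition \ref{prop:fp}. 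The expected outcome is that the twisting structure producing $f_3 = 4$ -- namely $v_3(a) \in \{1, 2\}$ or the analogous conditions for the other $g$ -- also forces the stabilization field at $2$ to be sufficiently wildly ramified that $\delta_2 \geq 4$, yielding $f_2 \geq 10$.

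\textbf{The main obstacle} is precisely this second step: although the problem is entirely finite thanks to Theorem \ref{thm:special_2_3}, the stable reduction computations at $p = 2$ for the twists with $f_3 = 4$ are technically involved. Proposition \ref{prop:f2special}(a) only supplies the baseline $f_2 \geq 6$, which is sharp for $Y_0$ itself, so the critical quantitative input is the strict improvement $f_2 \geq 10$ in the complementary regime. To organize the case analysis one can group the twists by the underlying special polynomial $g$ among the $26$ listed in Proposition \ref{prop:pol_list} and by $v_3(a)$; the twists with $v_3(a) \in \{0, 3\}$ fall under $f_3 = 6$ and are handled uniformly, while those with $v_3(a) \in \{1, 2\}$ require the refined $2$-adic stable reduction calculation sketched above. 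Completing this finite verification shows that the minimum conductor $N_Y = 2^6 3^6$ is attained precisely by $Y_0$.
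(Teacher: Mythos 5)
Your overall strategy coincides with the paper's: reduce to good reduction outside $\{2,3\}$ via Proposition \ref{prop:f2special}(b), then split on whether $f_3$ equals $4$ or $6$, with the whole difficulty concentrated in showing that $f_3=4$ forces $f_2\geq 10$. But that last step is exactly where your argument stops: you write that the ``expected outcome'' is $\delta_2\geq 4$ and hence $f_2\geq 10$, without giving any mechanism that produces this bound. Proposition \ref{prop:f2special}(a) only yields $f_2\geq 6$, which is sharp for $Y_0$ itself, so the decisive quantitative input is genuinely absent from your proposal. An ``explicit verification for each of the 800 twists'' is not a proof unless you say what computation is being done and why it gives the required inequality.

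The paper closes this gap in two moves that you should be able to reconstruct. First, $f_3=4$ forces $\epsilon_3=4$, and the proof of Proposition \ref{prop:f3special} shows this happens only when the inertia group at $3$ acts as a $4$-cycle on the roots of $g$; consulting Proposition \ref{prop:pol_list}, this cuts the candidates down to $g\in\{y^4\pm 12y^2-12,\;y^4\pm 6y^2-3\}$ (with $v_3(a)\in\{1,2\}$ by Example \ref{exa:specialp=3}), so there is no need to run through all $800$ twists. Second, for these four polynomials $g$ is irreducible over $\QQ_2^{\nr}$, and the argument in the proof of Proposition \ref{prop:f2special}(a) shows that the splitting field of $g$ (equivalently of its shadow $g'$) over $\QQ_2^{\nr}$ --- a wildly ramified extension with Galois group $\tilde{A}_4$ or $\tilde{S}_4$ --- must be contained in any Galois extension over which $Y$ acquires good reduction. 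Computing the jumps in the higher ramification filtration of that extension and feeding them into Proposition \ref{prop:fp} gives $f_2\geq 12$ in all four cases, which is what actually rules out $N_Y<2^63^6$. Without this identification of a forced wildly ramified subextension of the stabilization field at $2$, the claim $f_2\geq 10$ remains an assertion rather than a consequence.
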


\begin{proof}
  Let $Y/\QQ$ be a special Picard curve and assume that $N_Y<2^63^6$.
  Propositions \ref{prop:f3special} and \ref{prop:f2special}.(a) imply that
  $N_Y\geq 2^63^4$. Therefore Proposition \ref{prop:f2special}.(b) implies that
  $Y$ has good reduction at all primes $\geq 5$. Moreover, it follows from
  Proposition \ref{prop:f3special} that $f_3<6$ and hence that the invariant
  $\epsilon_3$ equals $4$.

  Let $x^4=a g(y)$ be an equation for $Y$ of the form \eqref{eq:special2} and
  let $L_0/\QQ$ be the splitting field of $g$. Since $\epsilon_3=4<6$ the proof
  of Proposition \ref{prop:f3special} implies that the inertia group $I_3$ at
  $3$ of the extension $L_0/\QQ$ acts as a $4$-cycle on the divisor of zeros
  $D=(g)_0$.  The only  special polynomials found in  Proposition
  \ref{prop:pol_list} that satisfy this condition are
  \begin{equation}\label{eq:gs}
    g\in \{ y^4\pm 12y^2-12, y^4\pm 6y^2-3\}.
  \end{equation}
  It follows from Example \ref{exa:specialp=3} that $v_3(a)\in \{1,2\}$. In all
  $8$ cases we have indeed that $f_3=\epsilon_3=4<6$.

  We consider the conductor exponent at $2$.  The considerations at $3$ imply
  that $g$ is one of the polynomials in \eqref{eq:gs}. One checks that $g$ is
  irreducible over $\QQ_2^{\nr}$.  The proof of Proposition
  \ref{prop:f2special}.(a) therefore implies that the splitting field of $g'$,
  and hence of $g$, over $\QQ_2^{\nr}$ is contained in any Galois extension
  $L/\QQ_2^\nr$ over which $Y$ acquires good reduction. The Galois group of the
  splitting field of $g$ is $\tilde{A}_4$ for $g\in \{ y^4\pm 6y^2-3\}$ and
  $\tilde{S}_4$ for $g\in \{ y^4\pm 12y^2-12\}$. Computing the jumps in the
  filtration of the higher ramification groups of the splitting fields and
  applying Proposition \ref{prop:fp}, one finds that $f_2\geq 12$ in all cases.
  This contradicts the assumption that $N<2^63^6$ and the proof is finished.
\end{proof}

\begin{example}
  The curve
  \begin{equation}
    Y :\; x^4=12(y^4+6y^2-3)
  \end{equation}
  has conductor $N=2^{12}3^4$.
\end{example}

\subsection{An upper bound for the conductor exponent?}\label{sec:upperbound}

Let $(K, v)$ be a complete discrete valuation field of mixed characteristic
whose residue field $k$ is a perfect field of characteristic $p$. Sutherland
asked us whether for smooth plane curves $Y/K$ it is true that
\begin{equation}\label{eq:ineq}
  f_p\leq v_p(\Delta^{{\rm min}}).
\end{equation}
The corresponding formula holds for elliptic curves (where it follows from
Ogg's formula \cite{Ogg67}), and (with a suitable notion of \emph{minimal
discriminant}) for curves of genus $2$, by work of Liu \cite{liu94}.

The inequality \eqref{eq:ineq} would imply that plane curves with small
discriminant also have small conductor. This is useful when collecting curves
of fixed genus with small conductor, as was done in \cite{Sutherland_database}
for the case of plane quartics. Our results on discriminant minimization can be
used to prove that \eqref{eq:ineq} holds for all Picard curves and for residue
characteristic $\neq 2,3$. Details will appear in \cite{roman-thesis}.
Moreover, as detailed in Appendix \ref{sec:database}, our computations of
explicit examples give strong evidence that the result remains true for $p=2,3$
as well.

To elaborate somewhat, recall that every nonspecial Picard curve over $K$ has a
short Weierstrass equation \eqref{eq:minshort} by Theorem \ref{thm:shortweq}.
Moreover, if $p\neq 3$, then we may assume that the corresponding homogenous
equation has minimal discriminant exponent (Theorem \ref{thm:minshort}).
Similarly, a special Picard curve can be given by a short Weierstrass equation
\eqref{eq:specminshort} by Theorem \ref{thm:special_sweq}, and for $p\neq 2$ we
may once more assume that the corresponding homogenous equation has minimal
discriminant exponent (Theorem \ref{thm:specminshort}). In each case, the curve
$Y$ can be given as a superelliptic curve of exponent prime to $p$. In his
upcoming PhD thesis \cite{roman-thesis}, Roman Kohls proves upper bounds for
the conductor exponent at $p$ of general superelliptic curves of exponent prime
to $p$, generalizing results of Srinivasan \cite{padma} for hyperelliptic
curves. Applied to integral short Weierstrass equations of the form
\eqref{eq:minshort} and \eqref{eq:specminshort}, these bounds prove the
inequality \eqref{eq:ineq}, for all Picard curves and for $p\neq 2,3$.

\appendix
\numberwithin{equation}{section}

\section{A description of the database}\label{sec:database}

We have made a database of Picard curves available at \cite{bsw-code}. At the
moment of publication, this database contained 5678 isomorphism classes of
Picard curves over $\QQ$ that have bad reduction at only two primes in $\left\{
2, 3, 5, 7 \right\}$. Of these curves, 800 are the special Picard curves
described in Theorem \ref{thm:special_2_3}. The other curves were constructed
from input furnished by \cite{MalmskogRasmussen} and \cite{MichelDiss}, from
computations in the upcoming work \cite{kmr} that we describe in Appendix
\ref{sec:modified_Smart}, and from an exhaustive search conducted by Andrew
Sutherland \cite{Sutherland_database}.

We represent a Picard curve $Y$ in the database by a reduced polynomial
\eqref{eq:binredweq}, but the database also gives minimal long and short long
Weierstrass equations over $\ZZ$. It further includes the invariants of $Y$
over $\QQ$ and $\QQbar$ (Proposition \ref{prop:invs}), the factorization of its
discriminant (Definition \ref{def:mindisc}) and finally, in many cases, its
conductor along with its reduction type at bad primes (Section
\ref{sec:conductor}).

Because determining the geometric and arithmetic invariants of a given Picard
curve over $\QQ$ via Proposition \ref{prop:invs} is an efficient operation, it
is possible to quickly look up whether a nonspecial curve over $\QQ$ given by
the user is in the database. Similarly, for special curves, isomorphisms are
found by using fast algorithms exploiting the special configuration of
hyperflexes described in Remark \ref{rem:specialiso}. Finally, using
$\QQbar$-invariants allows us to quickly look up all the twists of a given
curve that are in the database.

For details of the implementation which is written in \textsc{Magma}
\cite{Magma} and \textsc{SageMath} \cite{SageMath}, we refer to the file
\texttt{README.md} at \cite{bsw-code}. For 3516 of the curves in the database,
we were able to compute the conductor using the MCLF package in
\textsc{SageMath} \cite{mclf}. In all these cases, the conductor exponent is
bounded by the exponent of the minimal discriminant at all primes, including
$2$ and $3$. For all but 224 of the other curves, we still managed to calculate
the conductor exponents away from the prime $3$ (for nonspecial curves) or $2$
(for special curves).

\section{Computations}\label{sec:modified_Smart}

Let $S$ be a finite set of primes containing $3$. By Proposition
\ref{prop:goodredbinary}, we can compute the set of all nonspecial Picard
curves with good reduction outside $S$ by computing the set of equivalence
classes of binary quartic forms whose discriminant is a $S$-unit. A method to
find these binary quartic forms was developed by Smart \cite{Smart97}; it
reduces this question to solving of $S$-unit equations. In this section, we
briefly explain how one can modify and improve Smart's method in order to
compute Picard curves with good reduction outside $S$ for $S = \{ 2, 3 \}$, $\{
3, 5 \}$ and $\{ 3, 7 \}$. We will give a detailed exposition of the method in
the forthcoming paper \cite{kmr}.

Let $Y/\QQ$ be a nonspecial Picard curve with good reduction outside $S$. By
Corollary \ref{cor:binred}, the curve $Y$ admits a reduced short Weierstrass
equation
\begin{equation}
  y^3 = c f(x),
\end{equation}
where $f(x) = x^4 + a_3 x^3 + a_2 x^2 + a_1 x + a_0$ and $c, a_i \in \ZZ$. Let
$K / \QQ$ be the splitting field of $f$, let $\alpha_1, \dots, \alpha_4$ be the
roots of $f$ in $K$, and let $G = \Gal(K / \QQ)$. Let $S_K$ be the set of prime
ideals over $S$, and let $\OKSu \subset K^*$ the $S$-unit group associated to
$S_K$.

By Proposition \ref{prop:goodredbinary}, we have $v_p (c) = v_p (\Delta (f)) =
0$ for all $p$ outside $S$. Since $\Delta (f) = \prod_{1 \leq i < j \leq 4}
(\alpha_i - \alpha_j)^2$ , we obtain
\begin{equation}
  \alpha_i - \alpha_j \in \OKSu.
\end{equation}
Given $i,j,k \in \{1,2,3,4\}$ with $i \neq j,k$ and $j \neq k$, we define the
\emph{cross ratio}
\begin{equation}
  \lambda_{i,j,k} = \frac{\alpha_j - \alpha_i}{\alpha_k - \alpha_i} .
\end{equation}
Then for all $i, j, k$ as above we have that $\lambda_{i,j,k} \in \OKSu$ and $1
- \lambda_{i,j,k} \in \OKSu$. This means that $\lambda_{i,j,k}$ is a solution
of the $S$-unit equation
\begin{equation}\label{eq:Sunit}
  \lambda + \mu = 1,
\end{equation}
where $\lambda, \mu\in\OKSu$.

Given the values $\lambda_{i,j,k}$, the roots $\alpha_i$ can be calculated
using an argument involving Hilbert's Theorem 90. This gives us the
$\QQbar$-isomorphism class of $Y$, from which we can determine all models
\eqref{eq:binredweq} with good reduction outside $S$ by using the invariants
from Section \ref{sec:invariants} along with the description of twists of
nonspecial Picard curves in \cite{Lorenzo}.

Methods for solving $S$-equations were developed by De Weger, Tzanakis, Smart
and others \cite{TzanakisWeger89,Smart97}. The complexity of their algorithms
increases exponentially with the rank of $\OKSu$; we typically cut it down by
restricting to a subgroup of $\OKSu$, as in \cite{Angelos}. Moreover, this
complexity depends on the representation of the Galois group $G$ on the roots
of $f$. Currently, our database contains partial results for $S$ equal to $\{
2, 3 \}$, $\{ 3, 5 \}$ and $\{ 3, 7 \}$ and $G$ acting either trivially or as
$\langle (1,2) \rangle$ or $\langle (1,2,3)\rangle$.

\end{document}